\def\RR{{\bf R}}
\def\ZZ{{\bf Z}}
\numberwithin{equation}{section}
\newcommand{\Conv}{\mathop{\rm Conv} }
\newcommand{\supp}{\mathop{\rm supp}}
\newtheorem{Thm}{Theorem}[section]
\newtheorem{Prop}[Thm]{Proposition}
\newtheorem{Lem}[Thm]{Lemma}
\newtheorem{Cor}[Thm]{Corollary}
\theoremstyle{definition}
\newtheorem{Rem}[Thm]{Remark}
\newtheorem{Ex}[Thm]{Example}
\title{A Nonpositive Curvature Property of Modular Semilattices}
\author{Hiroshi HIRAI \\
Department of Mathematical Informatics, \\
Graduate School of Information Science and Technology,   \\
The University of Tokyo, Tokyo, 113-8656, Japan.\\
\texttt{\normalsize hirai@mist.i.u-tokyo.ac.jp}}
\begin{document}

\maketitle
\begin{abstract}
	The orthoscheme complex of a graded poset is 
	a metrization of its order complex such that the simplex of each maximal chain is isometric to the Euclidean simplex of vertices $0, e_1,e_1+e_2,\ldots, e_1+e_2+ \cdots + e_n$. 
	This notion was introduced by Brady and McCammond in geometric group theory, and  has applications in discrete optimization and submodularity theory. 
	We address a question of what posets to yield 
	the orthoscheme complex having CAT(0) property.
	The orthoscheme complex of a modular lattice 
	is shown to be CAT(0), and it is conjectured 
	that this is the case for a modular semilattice.
	In this paper, we prove this conjecture affirmatively.
	This result implies that a larger class of weakly modular graphs yields CAT(0) complexes. 
\end{abstract}

Keywords: Modular semilattice, CAT(0) space, orthoscheme complex

\section{Introduction}
The {\em orthoscheme complex} $K({\cal P})$ of a graded poset ${\cal P}$ 
is a metrization of the order complex of ${\cal P}$ 
such that the simplex of each maximal chain 
is isometric to the Euclidean simplex of vertices
\begin{equation}\label{eqn:orthoscheme}
0,\ e_1,\ e_1+e_2,\ e_1+e_2+e_3,\ \cdots, e_1+e_2+\cdots+e_n,  
\end{equation}
where $e_i$ is the $i$-th unit vector in $\RR^n$ and $n$ is the length of the chain.
See Figure~\ref{fig:orthoscheme} for the construction.
\begin{figure} 
	\begin{center} 
		\includegraphics[scale=0.7]{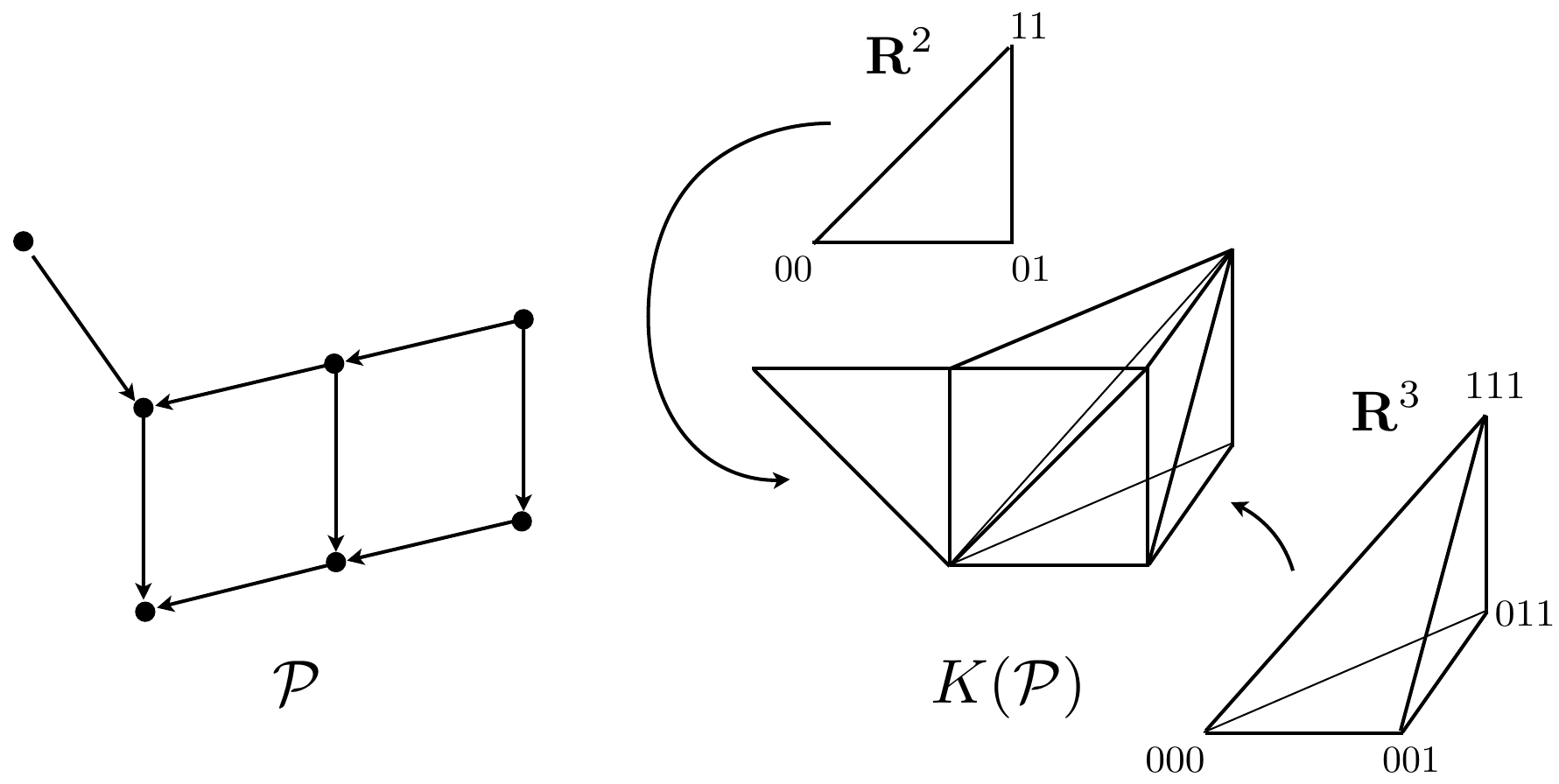}
		\caption{Orthoscheme complex}  
		\label{fig:orthoscheme}         
	\end{center}
\end{figure}
This concept was introduced by Brady and McCammond~\cite{BM10} 
in geometric group theory.
A central interest of the orthoscheme complex lies on the interplay between
combinatorial properties of ${\cal P}$ 
and metric properties of $K({\cal P})$. 
We are particularly interested in the situations where $K({\cal P})$ 
becomes a nonpositively curved metric space, i.e., CAT(0) space.
Here a {\em CAT(0) space} is a geodesic metric space in which  
every geodesic triangle is  {\em not thicker} than the 
corresponding triangle in Euclidean plane; 
see~\cite{BH} for the precise definition. 
In~\cite{BM10},  
Brady and McCammond made a beautiful conjecture saying 
that if ${\cal P}$ is the noncrossing partition lattice, 
then $K({\cal P})$ is CAT(0).
If this conjecture is true, 
then any braid group is a CAT(0) group, 
which is a longstanding conjecture in group theory.
See Haettel,  Kielak, and Schwer~\cite{HKS17} 
for the current best result on this direction.

Apart from such group-theory motivation, it is 
interesting to investigate 
which poset ${\cal P}$
has CAT(0) orthoscheme complex $K({\cal P})$.
If ${\cal P}$ is a Boolean lattice, 
then $K({\cal P})$ 
is isometric to a Euclidean cube $[0,1]^n$ and hence is CAT(0). 
This fact is naturally generalized to distributive lattices:
If ${\cal P}$ is a distributive lattice, 
then $K({\cal P})$ is isometric to
the {\em order polytope} (of the subposet of join-irreducibles in ${\cal P}$), 
and hence is CAT(0).
Brady and McCammond~\cite{BH} conjectured 
that this CAT(0) property holds for modular lattices.
Haettel, Kielak, and Schwer~\cite{HKS17} proved 
this conjecture for complemented modular lattices.
Then Chalopin et al.~\cite{CCHO14} 
proved the general case.
\begin{Thm}[\cite{CCHO14}]\label{thm:modularlattice}
	If ${\cal L}$ is a modular lattice of finite rank, then $K({\cal L})$ is CAT(0).
\end{Thm}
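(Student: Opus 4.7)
The strategy I would pursue combines the Cartan-Hadamard theorem with Gromov's link condition for piecewise Euclidean simplicial complexes. The first reduction is to simple connectivity plus a local curvature bound. Simple connectivity is immediate: since $\mathcal{L}$ has $\hat{0}$ and $\hat{1}$, the order complex of $\mathcal{L}$ is a double cone and hence contractible, so $K(\mathcal{L})$ is simply connected. Thus, it remains to verify that $K(\mathcal{L})$ is locally CAT(0), which amounts to showing that the spherical link at every point is CAT(1) of diameter at most $\pi$.

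I would attack the link condition by induction on the rank of $\mathcal{L}$. At a vertex $p$, the link in $K(\mathcal{L})$ decomposes as the spherical join of a ``lower'' piece built from the modular lattice $[\hat{0},p]$ and an ``upper'' piece built from $[p,\hat{1}]$. At a point interior to a simplex corresponding to a chain $p_{0}<p_{1}<\cdots<p_{k}$, the link refines further into a spherical join indexed by the subintervals $[p_{i-1},p_{i}]$. Since spherical joins preserve CAT(1), the crux becomes, for every modular lattice $\mathcal{M}$ of rank at least $2$, the CAT(1) property of the corresponding \emph{spherical} orthoscheme complex that appears as such a join factor. Modularity enters essentially through the projection maps $x\mapsto (x\vee a)\wedge b$ between intervals, which give combinatorial analogs of nonexpansive retractions and allow chains on opposite sides of an elementary perspectivity to be related by local isometries of the orthoscheme simplex with vertices \eqref{eqn:orthoscheme}.

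The main obstacle is the spherical CAT(1) inequality itself. One cannot directly invoke the Euclidean induction hypothesis on $K(\mathcal{M})$, because the local metric on a link is intrinsically spherical, not Euclidean. The natural route is a nested induction: prove $\pi$-simple connectivity of the spherical complex, and then verify that its own links are CAT(1), reducing to an even smaller modular interval. This hinges on showing that any candidate short closed geodesic in the spherical complex can be homotoped, using the modular projection maps and a Jordan-H\"{o}lder-type common refinement of the chains it crosses, into a region that lies inside a single Euclidean orthoscheme simplex, where the inequality becomes a calculation in the explicit geometry of the simplex \eqref{eqn:orthoscheme}. Organizing this nested induction so that the modularity of every interval $[p_{i-1},p_{i}]$ is genuinely exploited at each stage, and that the projection maps actually produce nonexpansive folds on the spherical link level, is where I expect the technical difficulty to concentrate.
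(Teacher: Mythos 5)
Your proposal takes a genuinely different route from the paper's, but it is not a proof: the critical step is left open, and that step is exactly where the difficulty sits.

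The paper does not prove Theorem~\ref{thm:modularlattice} via the link condition at all. Its route (summarized in Lemma~\ref{lem:modularlattice}, which the text identifies as ``the essence of the proof'' in~\cite{CCHO14}) goes through the unique-geodesic characterization of Lemma~\ref{lem:CAT(0)=uniquegeodesic}: by the Dedekind--Birkhoff theorem, any two chains in a modular lattice $\mathcal{M}$ lie in a common distributive sublattice $\mathcal{D}$; Lemma~\ref{lem:modularlattice} supplies a nonexpansive order-preserving retraction $\mathcal{M}\to\overline{\mathcal{D}}$ fixing $\mathcal{D}$, so $K(\mathcal{D})$ is a strictly-convex subspace of $K(\mathcal{M})$; and $K(\mathcal{D})$ is isometric to an order polytope (Example~\ref{ex:distributivelattice}), where the segment is the unique geodesic. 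Thus every pair of points has a unique geodesic, living in the order polytope of some common distributive sublattice. This is a ``Euclidean apartment'' argument, not a local-curvature argument.

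Your plan, by contrast, is to verify Gromov's link condition by a nested induction on rank. The outer framework is plausible --- the spherical join decomposition of the link at a poset element is correct for orthoscheme complexes, and simple connectivity via the double cone is fine --- but the proposal stops precisely at the point where work would have to be done. You acknowledge that the Euclidean induction hypothesis cannot be invoked on links, and then propose proving $\pi$-simple connectivity and local CAT(1) for the spherical orthoscheme complex, by homotoping short closed geodesics into a single orthoscheme. That last sentence is a restatement of the problem, not a solution: no mechanism is given for producing the homotopy, no computation in the spherical orthoscheme is carried out, and no nonexpansive ``folds'' on the spherical level are actually constructed. This is a genuine gap. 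It is worth noting that the paper explicitly reports that the link-condition approach was their first attempt for the semilattice generalization and that they ``could not succeed'' with it --- which is precisely why the paper develops the retraction/unique-geodesic machinery instead. Your proposal runs into the same wall and does not get past it.

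If you want to salvage something from your approach, the modular projection $x\mapsto(x\vee a)\wedge b$ you mention is a shadow of the right idea, but the paper's way of exploiting modularity is different: it uses $a\wedge$ and $a\vee$ as nonexpansive retractions on the \emph{Euclidean} complex $K(\mathcal{M})$ (Lemma~\ref{lem:decomposition}), together with the Dedekind--Birkhoff common distributive sublattice, rather than trying to make modularity act on spherical links.
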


Chalopin et al.~\cite{CCHO14} further 
studied the orthoscheme complexes of (meet-)semilattices.
They proved that the orthoscheme complex of 
a {\em median semilattice}, a semilattice 
analogue of a distributive lattice, is CAT(0), and 
conjectured the CAT(0) property for {\em modular semilattices}~\cite[Conjecture 7.3]{CCHO14}; 
see also \cite[Problem 6.10]{Bacak}. 
%
%
The main result in this paper affirmatively solves
this conjecture.
\begin{Thm}\label{thm:main}
	If ${\cal L}$ is a modular semilattice of finite rank, 
	then $K({\cal L})$ is CAT(0).
\end{Thm}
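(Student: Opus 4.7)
The plan is to invoke the Cartan--Hadamard theorem: a complete, simply connected, locally CAT(0) geodesic metric space is CAT(0). Completeness is immediate since ${\cal L}$ has finite rank. Simple connectedness follows from contractibility: as a meet-semilattice, ${\cal L}$ has a minimum $\hat{0}$, so the order complex underlying $K({\cal L})$ is a topological cone over $\hat{0}$. The remaining task is the local CAT(0) condition, i.e., showing that for every vertex $x \in {\cal L}$ the spherical link $\mathrm{Lk}(x, K({\cal L}))$ is CAT(1).

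For any $x \in {\cal L}$, the orthoscheme geometry produces an orthogonal spherical-join decomposition $\mathrm{Lk}(x, K({\cal L})) = \mathrm{Lk}^-(x) \ast \mathrm{Lk}^+(x)$, where $\mathrm{Lk}^-(x)$ is the link at the top vertex of the subcomplex $K([\hat{0}, x])$ and $\mathrm{Lk}^+(x)$ is the link at the bottom vertex of $K({\cal L}_{\ge x})$ for the filter ${\cal L}_{\ge x} := \{y \in {\cal L} : y \ge x\}$. Since a spherical join is CAT(1) iff both factors are, I treat the two factors separately. Bounded intervals of a modular semilattice are modular lattices, so $[\hat{0}, x]$ is a modular lattice; Theorem~\ref{thm:modularlattice} gives that $K([\hat{0}, x])$ is CAT(0), and hence $\mathrm{Lk}^-(x)$ is CAT(1). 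For $\mathrm{Lk}^+(x)$, the filter ${\cal L}_{\ge x}$ is itself a modular semilattice (with minimum $x$) of rank strictly smaller than ${\cal L}$ whenever $x \neq \hat{0}$, so induction on rank gives that $K({\cal L}_{\ge x})$ is CAT(0) and hence $\mathrm{Lk}^+(x)$ is CAT(1).

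The case $x = \hat{0}$ is not reduced by induction, since ${\cal L}_{\ge \hat{0}} = {\cal L}$ has the same rank; establishing CAT(1) for $\mathrm{Lk}(\hat{0}, K({\cal L}))$ is therefore the main obstacle. My approach is to decompose ${\cal L}$ as the union of its principal ideals $[\hat{0}, m]$ over the maximal elements $m \in {\cal L}$, and to assemble $K({\cal L})$ from the pieces $K([\hat{0}, m])$ via iterated Reshetnyak gluing. Each piece is CAT(0) by Theorem~\ref{thm:modularlattice}, and the pairwise intersection of two pieces is $K([\hat{0}, m \wedge m'])$, again a modular-lattice orthoscheme complex. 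The technical heart of the argument is to show that each such intersection is a \emph{convex} subspace of its containing piece $K([\hat{0}, m])$; this is a gatedness property for principal down-sets inside the orthoscheme complex of a modular lattice, which I expect to prove by constructing explicit gate projections from the meet operation. It is precisely at this step that the defining property of a modular semilattice --- that two elements with a common upper bound must admit a join --- enters, guaranteeing compatibility of the gluing data across all pairs of maximal elements and thereby also yielding CAT(1) of the link at $\hat{0}$ as a by-product of global CAT(0) of $K({\cal L})$.
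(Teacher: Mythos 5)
Your plan is the link-condition-plus-Cartan--Hadamard route, which the paper explicitly tried and abandoned (``Our first attempt for proving Theorem~\ref{thm:main} was to adapt this argument. However we could not succeed.''). The difficulty you flag at $x=\hat 0$ is exactly where the paper got stuck, and your sketch does not resolve it.

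Several of the preliminary steps are fine. The spherical-join decomposition $\mathrm{Lk}(x)=\mathrm{Lk}^-(x)\ast\mathrm{Lk}^+(x)$ is correct for orthoschemes, the down-link is handled by Theorem~\ref{thm:modularlattice}, and the induction on rank for $\mathrm{Lk}^+(x)$ when $x\neq\hat 0$ is sound. The convexity of $K({\cal I}(m\wedge m'))$ in $K({\cal I}(m))$ is also available --- it is Lemma~\ref{lem:decomposition2} in the paper --- so you would not need to reinvent the gate projections.

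The genuine gap is the iterated Reshetnyak gluing. Reshetnyak gives CAT(0) for the gluing of \emph{two} CAT(0) pieces along a common convex subspace. To iterate, at step $k$ you need the intersection of $K({\cal I}(m_{k+1}))$ with the already-glued union $\bigcup_{i\le k}K({\cal I}(m_i))$ to be a single convex subspace of both. But this intersection is $\bigcup_{i\le k}K({\cal I}(m_{k+1}\wedge m_i))$, a \emph{union} of principal-ideal complexes, and in general it is not convex in $K({\cal I}(m_{k+1}))$. Concretely, take ${\cal L}$ the face poset of a $4$-cycle (a rank-$2$ modular, in fact median, semilattice): with maximal edges $e_1,e_2,e_3,e_4$ in cyclic order, the intersection of ${\cal I}(e_4)$ with ${\cal I}(e_1)\cup{\cal I}(e_2)\cup{\cal I}(e_3)$ is ${\cal I}(v_{14})\cup{\cal I}(v_{34})$, two incomparable atoms together with $\hat0$, which is not of the form ${\cal I}(u)$ and not convex. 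So the gluing pattern is not tree-like, and the gated-amalgam machinery of \cite[Proposition 7.5]{CCHO14} --- which the paper invokes only along the linear intersection pattern of an arch in Lemma~\ref{lem:K(A)} --- does not apply to the collection of all maximal principal ideals. The flag condition (LFL), which you rightly sense must enter, gives no control here: maximal elements never admit pairwise joins, so (LFL) is vacuous for them. Finally, there is a circularity in the framing: if the global Reshetnyak gluing succeeded, it would directly give CAT(0) of $K({\cal L})$ and the whole Cartan--Hadamard/link scaffolding would be unnecessary; but as it stands, neither route is closed. The paper instead uses the characterization of CAT(0) by unique geodesics (Lemma~\ref{lem:CAT(0)=uniquegeodesic}) and builds geodesics explicitly by extending Owen's tree-space formula through arches, distributive frames, and a family of nonexpansive retractions --- a substantially different and considerably longer argument, but one that actually closes.
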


This theorem can be used to show the CAT(0) property 
for a larger class of orthoscheme 
complexes related to {\em weakly modular graphs}.
This was a motivation of~\cite{CCHO14} 
to consider modular semilattices.
A weakly modular graph $G$ is a connected undirected graph satisfying 
the triangle condition (TC) and quadrangle condition (QC):
\begin{itemize}
	\item[(TC)] For vertices $x,y,z$ with $d(x,y) = 1$ and $d(x,z) = d(y,z) = k > 0$, there is a vertex $v$ with $d(x,v) = d(y,v)=1$ and  $d(v,z) = k-1$. 
	\item[(QC)] For vertices $u,x,y,z$ with $d(u,x) = d(u,y) = 1 = d(x,y) - 1$ and $d(x,z) = d(y,z) = d(u,z) - 1 = k>0$, there is a vertex $v$ with $d(x,v) = d(y,v)=1$ and  $d(v,z) = k-1$.
\end{itemize}
Here $d$ denotes the shortest path metric of $G$.
A {\em sweakly modular graph}, {\em swm-graph} for short, 
is a weakly modular graph 
such that there is no induced $K_4^-$-subgraph 
and no isometric $K_{3,3}^-$-subgraph, 
where $K_4^-$ and $K_{3,3}^-$ are the graphs 
obtained from $K_4$ and $K_{3,3}$, respectively, by removing one edge.
It turned out in \cite[Chapters 6--8]{CCHO14} that swm-graphs 
constitute a particularly nice subclass of weakly modular graphs 
and have rich connections to nonpositively curved spaces.
Examples of swm-graphs include 
median graphs, the covering graphs 
of modular (semi)lattices, dual polar graphs, and 
a certain subgraph of the 1-skeleton of a Euclidean building of type C. 
A recent work \cite{HH17_building} shows that 
Euclidean buildings of type A can also be characterized by certain modular lattices and hence swm-graphs.
All these examples are connected 
to CAT(0) spaces: 
A median graph is precisely 
the 1-skeleton of a CAT(0) cubical complex~\cite{Chepoi00}.
Also it is well-known that 
a Euclidean building canonically admits 
a CAT(0) metric. 

Chalopin et al.\cite{CCHO14} 
presented a general construction 
of a metrized simplicial complex $K(G)$ from any swm-graph $G$, 
which generalizes the construction of the CAT(0) cubical complex 
from a median graph 
and explains the recovery of a Euclidean building of type A/C from its graph.
The construction of $K(G)$ is briefly explained as follows.
A {\em Boolean-gated set} of an swm-graph $G$ is a 
nonempty vertex subset $X$ having the following properties:
\begin{itemize}
	\item For any distinct vertices $x,y \in X$, any common neighbor of $x,y$ 
	belongs to $X$.
	\item For any $x,y \in X$ with $d(x,y) = 2$, there are common neighbors $u,v$ of $x,y$ 
	with $d(u,v) =2$. 
\end{itemize}
The family of all Boolean-gated set forms 
a graded poset in terms of the reverse inclusion order, 
where singletons are maximal elements, and the maximum length of a chain 
is called the {\em cube-dimension} of $G$.
Then one can consider the orthoscheme complex $K(G)$ of this poset.
Chalopin et al.~\cite{CCHO14} conjectured 
that $K(G)$ is CAT(0) (provided $G$ has a finite cube-dimension), 
which is one of the main conjectures of that paper.

The above Theorem~\ref{thm:main} implies this conjecture.
Indeed, each point in $K(G)$ has a convex neighborhood isometric 
to the orthoscheme complex of some modular semilattice; see \cite[Proposition 8.3]{CCHO14}.
Thus, by Theorem~\ref{thm:main}, 
$K(G)$ is locally CAT(0).
Also it is shown \cite[Theorem 8.1(iii)]{CCHO14} that 
$K(G)$ is a contractible complex. 
By the Cartan-Hadamard theorem~\cite[Chapter II.4]{BH}, $K(G)$ is (globally) CAT(0):  
\begin{Cor}\label{cor:swm}
	If $G$ is an swm-graph of finite cube-dimension, 
	then $K(G)$ is CAT(0).
\end{Cor}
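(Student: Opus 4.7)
The plan is to deduce the global CAT(0) property of $K(G)$ from the Cartan--Hadamard theorem, whose hypotheses are completeness, simple connectivity, and local CAT(0). The first and third will be supplied by Theorem~\ref{thm:main} together with structural results of \cite{CCHO14}, and the second will come from contractibility, again from \cite{CCHO14}.

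Concretely, I would argue as follows. For an arbitrary point $p \in K(G)$, Proposition 8.3 of \cite{CCHO14} provides a convex neighborhood of $p$ that is isometric to the orthoscheme complex $K({\cal L})$ of some modular semilattice ${\cal L}$; the finite cube-dimension assumption on $G$ ensures that each such ${\cal L}$ has finite rank. Theorem~\ref{thm:main} then makes this neighborhood CAT(0), so $K(G)$ is locally CAT(0). Next, Theorem 8.1(iii) of \cite{CCHO14} yields that $K(G)$ is contractible, in particular simply connected. Finally, because $K(G)$ is assembled from Euclidean orthoschemes of bounded dimension and therefore uses only finitely many isometry types of simplices, Bridson's theorem (see \cite[Chapter I.7]{BH}) makes $K(G)$ a complete geodesic metric space. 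The Cartan--Hadamard theorem (\cite[Chapter II.4]{BH}) then concludes that $K(G)$ is globally CAT(0).

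The genuine mathematical content has already been absorbed into Theorem~\ref{thm:main}; the remaining work in the corollary is merely to verify that the local models provided by \cite[Proposition 8.3]{CCHO14} belong to the class covered by that theorem. The only subtlety to double-check is that the finite cube-dimension hypothesis on $G$ translates correctly into finiteness of the ranks of these local modular semilattices and into boundedness of the dimensions of the orthoschemes used to build $K(G)$; once this is observed, the conclusion is essentially immediate, and there is no serious obstacle beyond this bookkeeping.
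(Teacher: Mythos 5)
Your proposal is correct and follows essentially the same route as the paper: local CAT(0) via Theorem~\ref{thm:main} applied to the convex neighborhoods from \cite[Proposition 8.3]{CCHO14}, contractibility from \cite[Theorem 8.1(iii)]{CCHO14}, and then the Cartan--Hadamard theorem. The only difference is that you spell out the completeness hypothesis and the finite-rank bookkeeping explicitly, which the paper leaves implicit.
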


\paragraph{Related work.}
Although the main source of this paper is \cite{CCHO14}, 
our primary motivation and proof idea of Theorem~\ref{thm:main} come from  
recent developments in 
algorithmic theory on CAT(0) spaces.
One of the starting points is the {\em space of phylogenetic trees} 
(also called  the {\em BHV tree space}) due to 
Billera, Holmes, and Vogtmann~\cite{BHV01}, 
which parametrizes 
all weighted trees having a given set (taxa) as leaves.
This space is a (non-convex) polyhedral region 
obtained by gluing nonnegative orthants in Euclidean space, 
and admits a natural length metric.
They showed that the BHV tree space 
is CAT(0).
By the unique geodesic property of CAT(0) spaces,  
a geodesic between two phylogenetic trees 
is uniquely determined, which will be a useful comparison tool 
for phylogenetic trees. 
This gives rise to a natural computational problem 
to find the unique geodesic and to determine its complexity.
A wonderful solution was given by Owen~\cite{Owen11} and Owen and Provan~\cite{OwenProvan11}:
The former paper established an explicit formula of
 tree-space geodesics, and,  based on it, the latter paper
gave a polynomial time algorithm to find the geodesics.
Subsequently,  Miller, Owen, and Provan~\cite{MOP15} generalized 
these results to {\em CAT(0) orthant spaces}, 
which are CAT(0) spaces obtained 
by gluing nonnegative orthants in Euclidean space.

Abram and Ghrist~\cite{AbramGhrist04}  
formulated the state space of a robot as a (locally) CAT(0) cubical complex, in which an optimal motion plan between two states is obtained 
by a geodesic in this space.
Motivated by this application, 
Ardila, Owen, and Sullivant~\cite{ArdilaOwenSullivant2012} 
studied the geodesic problem in general CAT(0) cubical complexes, and 
developed a compact representation of CAT(0) cubical complexes and
an algorithm to find geodesics. 
However this algorithm is not guaranteed to be polynomial.  
For the challenge of a polynomial time algorithm,
Hayashi~\cite{Hayashi} gave a satisfactory solution 
by developing a simple polynomial time algorithm to
find a ``near" geodesic with accuracy parameter $\epsilon > 0$, 
where 
$\log \epsilon^{-1}$ is a part of the input.

Theory of 
convex optimization on CAT(0) spaces is 
a new emerging field, 
in which several Euclidean optimization algorithms have been being extended~\cite{BacakBook}; see also \cite{Bacak}.
The construction ${\cal L} \hookrightarrow K({\cal L})$ gives rise to
a continuous relaxation, analogous to $\ZZ^n \hookrightarrow \RR^n$, of a discrete optimization problem on ${\cal L}$.
By using Theorem~\ref{thm:modularlattice}, 
Hirai~\cite{Hirai_L-convex} showed  
that any {\em submodular function}~\cite{FujiBook} 
on a modular lattice ${\cal L}$ can be extended to 
a convex function on the CAT(0) space $K({\cal L})$.
Hamada and Hirai~\cite{HamadaHirai} 
applied this result to 
a certain discrete optimization problem 
on a modular lattice ${\cal L}$ of all vector subspaces, 
and developed a polynomial time algorithm via 
a continuous optimization method for 
the CAT(0) space relaxation $K({\cal L})$.
In \cite{HH0ext,Hirai_L-convex}, 
modular semilattices, swm-graphs and related structures deserve
the underlying spaces of {\em discrete convex functions}, which
have played important roles to design efficient polynomial time algorithms 
to some classes of combinatorial optimization problems; 
see also \cite{HH_survey}.
 
\paragraph{Outline.}
We outline the proof of Theorem~\ref{thm:main} and 
the structure of this paper. 
In general, proving the CAT(0) property is not easy; 
we are currently in the position of seeking proof techniques
that are applicable to combinatorially-defined geodesic metric spaces. 
One successful tool is 
Gromov's combinatorial characterization of CAT(0) cubical complexes.
Indeed the CAT(0) property of the BHV tree space 
is an immediate consequence of this characterization.
Gromov's characterization can be proved by verifying 
the {\em link condition} of the link complex of each vertex; 
see the proof of \cite[Theorem 5.18]{BH}.
Our first attempt for proving Theorem~\ref{thm:main}  
was to adapt this argument. 
However we could not succeed.
Instead, we show the unique geodesic property of $K({\cal L})$. 
This is another equivalent condition 
of the CAT(0) property for a class of complexes~\cite[Theorem 5.4]{BH}, 
which includes our complexes.
In fact, the unique geodesic property 
of the BHV tree space can directly be proved,  
without knowing the CAT(0) property, 
from the formula of geodesics.
The orthoscheme complexes of modular semilattices
can realize BHV tree spaces as well as CAT(0) orthant spaces.
The central of our proof is to extend Owen's geodesic 
formula to $K({\cal L})$.
We construct various 
nonexpansive retractions in $K({\cal L})$ in lattice-theoretic ways, 
and show 
that any geodesic between two points $x,y \in K({\cal L})$
belongs to a certain subcomplex of $K({\cal L})$ determined by $x,y$. 
This subcomplex is a variant (a {\em median orthoscheme complex}) 
of CAT(0) orthant spaces.
By extending Owen's formula, 
we show that there uniquely exists a geodesic in this subcomplex, which establishes the unique geodesic property.

This paper is organized as follows.
In Section~\ref{sec:preliminaries},  we introduce necessary backgrounds 
on geodesic metric spaces (Section~\ref{subsec:geodesicmetricspace}) and formally introduce orthoscheme complexes (Section~\ref{subsec:orthoscheme}).
In Section~\ref{subsec:cubical} 
we explain Owen's geodesic formula with a new perspective, and outline how to prove 
the unique geodesic property from this formula, 
which is the underlying proof 
idea of Theorem~\ref{thm:main}.
In Section~\ref{subsec:median}, we extended 
this result for a median orthoscheme complex, which is
the orthoscheme complex of a median semilattice.
In Section~\ref{sec:modularsemilattice}, 
we study the orthoscheme complex of a modular semilattice 
and prove Theorem~\ref{thm:main}, 
where a detailed proof outline is given in the beginning of the section. 
Our proof is constructive, and sheds an insight 
on the geodesic problem from an algorithmic point of view
(Remarks~\ref{rem:MSIP} and \ref{rem:MVSP}).

The extended abstract of this paper will appear in the proceedings 
of the 11th Hungarian-Japanese Symposium 
on Discrete Mathematics and Its Applications (May 27--30, 2019).
Corollary~\ref{cor:swm} was announced at Geometry Seminar 
in University of Wroclaw at September~3,~2015.

\section{Preliminaries}\label{sec:preliminaries}

Let $\RR$ denote the set of real numbers.
For a function $x: S \to \RR$, the {\em nonzero support} of $x$ is defined by
\[
\supp x := \{ v \in S \mid x(v) \neq 0 \}.
\]
For a set $S$ and a subset $R \subseteq \RR$, 
let $R^S$ denote the set of all functions $x:S \to R$
such that its nonzero support $\supp x$
is finite.
An element $x$ of $R^S$ is written as a formal sum
\begin{equation*}
x = \sum_{v \in S} x_v v,
\end{equation*}
where $x(v) = x_v$ for $v \in S$.
For a subset $S' \subseteq S$,
the restriction of $x \in R^S$ to $S'$ is denoted by $x|_{S'}$.
Any element $x \in R^{S'}$ is naturally regarded as $x \in R^S$ by $x(v) := 0$ for $v \in S \setminus S'$. 
In particular, $R^{S'} \subseteq R^S$.

\subsection{Geodesic metric space}\label{subsec:geodesicmetricspace}

Let $K$ be a metric space with distance function $d = d_K$.
A path $P$ is a continuous function from $[0,1]$ to $K$.
If $P(0) = x$ and $P(1) = y$, then we say 
that $P$ connects $x$ and $y$ or $P$ is an $(x,y)$-path.
If the image of $P$ belongs to a subset $K'$ of $K$, 
then we simply say that $P$ belongs to $K'$.
The {\em length} $d(P)$ of a path $P$ is defined by
\begin{equation}\label{eqn:length}
\sup \sum_{i = 1}^{N} d(P(t_{i-1}),P(t_{i})),
\end{equation}
where the supremum is taken over all $N > 0$ and $0 = t_0 < t_1 < \dots < t_N =1$.
It is obvious that $d(x,y) \leq d(P)$ for any $(x,y)$-path $P$.
A metric space $K$ is called {\em geodesic} 
if for every $x,y \in K$ there is an $(x,y)$-path $P$ with $d(P) = d(x,y)$; 
such a path $P$ is called {\em shortest}. 
A {\em geodesic} between $x$ and $y$ is 
a shortest $(x,y)$-path $P$ 
proportionally parametrized by its length. Namely, a geodesic is a path $P$ satisfying 
$|s-t| d(P(0),P(1)) = d(P(s),P(t))$ for $s,t \in [0,1]$.
For a subset $K'$ of a geodesic metric space $K$, 
the metric $d_{K'}$ of $K'$ is defined by the infimum of the length of a path connecting two points in $K'$, 
where the length is measured in the metric $d_K$ on $K$ as in (\ref{eqn:length}).
The resulting metric space $K'$ is said to be a {\em subspace} of $K$. 
By definition, $d_{K'}(x,y) \geq d_{K}(x,y)$ holds for all $x,y \in K'$.
A subspace $K'$ is said to be {\em convex} if $d_{K'}(x,y) = d_{K}(x,y)$ holds for all $x,y \in K'$. 
In addition, $K'$ is called {\em strictly-convex} if 
for every $x,y \in K'$ every shortest $(x,y)$-path belongs to $K'$. 
A (continuous) map $\phi:K \to M$ for metric spaces $K,M$ is said 
to be {\em nonexpansive} if $d_M(\phi(x), \phi(y)) \leq d_K(x,y)$ for all $x,y \in K$.
In this paper, we will often face a nonexpansive {\em retraction} $\phi: K \to K$, 
i.e., $\phi$ is the identity on $\phi(K)$.
In this case, the retract $\phi(K)$ is a convex subspace of $K$.

We next introduce a CAT(0) space.
We only give the following simpler definition, which is not used later.
A geodesic metric space $K$ is said to be {\em CAT(0)} 
if for every point $x \in K$ and every geodesic $P$, 
the function $t  \mapsto d(x, P(t))^2$ is $1$-strongly convex, i.e.,
\begin{equation}\label{eqn:CAT(0)}
d(x,P(t))^2 \leq (1-t) d(x,P(0))^2 + td(x,P(1))^2 - t(1-t)d(P(0),P(1))^2.
\end{equation}
A CAT(0) space is {\em uniquely geodesic} in the sense that for every pair of points 
there exists a unique geodesic connecting them.
This property characterizes the CAT(0) property 
for a larger class of geodesic metric spaces.
An {\em $M_0$-polyhedral complex} is a metric space obtained 
by gluing convex polytopes in Euclidean space along their common isometric faces.
The precise definition is given in \cite[Chapter I.7]{BH}.
It is known~\cite[Theorem 7.19]{BH} that an $M_0$-polyhedral complex is a complete geodesic metric space 
if it is constructed from a family of convex polytopes 
in which there are finitely many isometry classes in the family.
\begin{Lem}[{\cite[Theorem 5.4]{BH}}]\label{lem:CAT(0)=uniquegeodesic}
	Let $K$ be an $M_0$-polyhedral complex with finite isometry types of cells.
	Then $K$ is CAT(0) if and only if $K$ is uniquely geodesic.
\end{Lem}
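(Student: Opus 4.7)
The plan is to prove both implications, observing that the forward direction (CAT(0) $\Rightarrow$ uniquely geodesic) is general and holds in any CAT(0) space, whereas the converse relies essentially on the $M_0$-polyhedral structure with finitely many isometry types of cells. For the forward direction, I would argue directly from the strong convexity inequality (\ref{eqn:CAT(0)}): if $P$ and $Q$ are two geodesics from $x$ to $y$, apply (\ref{eqn:CAT(0)}) with the base point $x$ replaced by $P(t)$ and the geodesic taken to be $Q$. Since $P(t)$ lies on a geodesic from $x$ to $y$, the right-hand side evaluates at parameter $t$ to zero, forcing $d(P(t), Q(t)) = 0$ for every $t \in [0,1]$, hence $P \equiv Q$.

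For the converse, the strategy is to invoke the Cartan-Hadamard theorem~\cite[Chapter II.4]{BH}, which asserts that an $M_0$-polyhedral complex is (globally) CAT(0) if and only if it is simply connected and locally CAT(0). From the unique geodesic hypothesis I need to extract both properties. Simple connectedness is immediate, and in fact contractibility follows: fix a basepoint $x_0 \in K$ and let $\gamma_y$ denote the unique geodesic from $x_0$ to $y$; the map $H(y,t) := \gamma_y(1-t)$ is well-defined and, by a standard continuity argument for geodesic endpoints in a complete uniquely geodesic length space with finite isometry types of cells, is continuous, giving a deformation retraction to $x_0$.

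The delicate part is local CAT(0). By Gromov's link condition (\cite[Theorem II.5.2]{BH}), an $M_0$-polyhedral complex with finitely many isometry types of cells is locally CAT(0) if and only if the spherical link at every vertex is CAT(1). I would prove the contrapositive: suppose some link $\mathrm{Lk}(v, K)$ fails to be CAT(1). Then $\mathrm{Lk}(v,K)$ admits a closed local geodesic loop of length strictly less than $2\pi$, or equivalently a pair of distinct shortest paths between some pair of points in the link. Transferring back to $K$ via the exponential-type chart near $v$ (available because cells are Euclidean and glued isometrically), this produces two distinct locally shortest paths between nearby points $x, y$ in a small neighborhood of $v$. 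The finite-isometry-types hypothesis supplies a uniform lower bound on cell size, so that by shrinking we may assume both paths are globally shortest; this contradicts unique geodesicness.

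The main obstacle I expect is precisely this last step, namely upgrading a local failure of CAT(1) in some link into a genuine pair of distinct globally shortest paths in $K$. The difficulty is that locally geodesic paths in $K$ need not be globally geodesic, so one must carefully choose the endpoints small enough that comparison geometry forces both local paths to realize the distance. The finite isometry types assumption is essential here, as it furnishes a uniform scale below which local geodesics coincide with global ones; without it, pathological accumulation of cell shapes could defeat the argument, which explains why the lemma needs this hypothesis rather than holding for arbitrary $M_0$-polyhedral complexes.
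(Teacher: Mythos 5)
The paper does not prove this lemma; it is cited directly from Bridson--Haefliger. So the assessment is of your sketch on its own terms. The forward direction is correct: the computation you describe with base point $P(t)$ and geodesic $Q$ does make the right-hand side of (\ref{eqn:CAT(0)}) vanish, giving $P = Q$.

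The converse has a genuine gap, and it sits exactly at the step you yourself flag as delicate. First, the parenthetical ``or equivalently a pair of distinct shortest paths between some pair of points'' is not an equivalence: a closed local geodesic loop of length $2\ell<2\pi$ has two halves that are local geodesics of length $\ell<\pi$, but neither half need realize the distance between its endpoints in the link, so you cannot simply transfer them to two competing \emph{shortest} paths near $v$. Second, and more fundamentally, the implication ``$\mathrm{Lk}(v,K)$ not CAT(1) $\Rightarrow$ there is a closed local geodesic loop of length $<2\pi$'' is itself a Cartan--Hadamard-type theorem for CAT(1) spaces, and it requires $\mathrm{Lk}(v,K)$ to already be \emph{locally} CAT(1). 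That is not given: a priori the link can fail CAT(1) because one of \emph{its} vertex links fails CAT(1), and so on down the dimensions. Making this rigorous forces an induction on dimension (prove the analogous equivalence for $M_1$-complexes one dimension lower, apply it to the links, and only then invoke the short-closed-geodesic dichotomy), which your outline does not set up. The ``uniform lower bound on cell size'' framing is also not quite the right invocation of the finite-shapes hypothesis; what one actually needs is the existence, for each point, of an $\epsilon$-ball isometric to a ball in the Euclidean cone over the link, together with completeness/geodesicity of $K$. Finally, as a smaller point, the claimed continuity of $y\mapsto\gamma_y$ needed for contractibility is not automatic in a complete uniquely geodesic length space that is not proper, and $M_0$-complexes with finite shapes need not be locally compact, so this step also needs care.
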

In this paper we only deal with  $M_0$-polyhedral complexes $K$
with finite isometry types of simplices.  The above characterization is applicable.
Hence, instead of (\ref{eqn:CAT(0)}),  
we mainly concern the unique geodesic property.
From a general result~\cite[Corollary 7.29]{BH},
each geodesic in $K$ is {\em polygonal} 
in the sense that it meets a finite number of simplices.
Therefore we can assume that a path in $K$ 
is polygonal if necessary.
To show that $K' \subseteq K$ is strictly-convex, 
it suffices to construct a nonexpansive retraction $\phi$ to $K'$ such that
$d_K(\phi(P)) < d_K(P)$ holds for any polygonal path $P$ connecting any $x,y \in K'$ and not belonging to $K'$.
Such a  $\phi$ is particularly called a 
{\em strictly-nonexpansive retraction}.

We present one simple and useful lemma for proving the unique geodesic property.
For two (geodesic) metric spaces $M,N$, 
the product $M \times N$ is a (geodesic) metric space, 
where the distance function $d_{M \times N}$ is defined by
\[
d_{M \times N}((x,y),(x',y'))^2 = d_M(x,x')^2 + d_N(y,y')^2.
\]
For paths $P$ in $M$ and $Q$ in $N$, 
the {\em product} $(P,Q)$ of $P,Q$ is the path in $M \times N$ 
defined by $(P,Q)(t) := (P(t),Q(t))$ $(t \in [0,1])$. 
Note that (the image of) $(P,Q)$ depends on the parameterizations of $P,Q$.  
\begin{Lem}\label{lem:product}
	Let $M,N$ be metric spaces, and $K$ a subspace of $M \times N$.
	Let $P = (Q,R)$ be a path in $K$.
	Then it holds
	\begin{equation}\label{eqn:PQR}
	d_{K}(P)^2 \geq d_M(Q)^2 + d_N(R)^2.
	\end{equation}
	 If $Q$ and $R$ are (unique) geodesics in $M$ and in $N$, respectively, 
	 then the equality holds in (\ref{eqn:PQR}) and
	 $P$ is a (unique) geodesic in $K$.
\end{Lem}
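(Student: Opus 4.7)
The plan is to deduce both assertions from the definition of path length in (\ref{eqn:length}), the triangle inequality for the Euclidean norm on $\RR^2$, and the fact that $d_K$ is the length metric induced from $d_{M \times N}$. For the first inequality, fix a partition $0 = t_0 < t_1 < \cdots < t_N = 1$ and set $a_i := d_M(Q(t_{i-1}), Q(t_i))$ and $b_i := d_N(R(t_{i-1}), R(t_i))$. Since $K$ is a subspace of $M \times N$, $d_K(P(t_{i-1}), P(t_i)) \geq d_{M \times N}(P(t_{i-1}), P(t_i)) = \sqrt{a_i^2 + b_i^2}$, and the Euclidean triangle inequality gives $\sum_i \sqrt{a_i^2 + b_i^2} \geq \sqrt{(\sum_i a_i)^2 + (\sum_i b_i)^2}$. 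Refining partitions only increases $\sum_i a_i$ and $\sum_i b_i$ (by the triangle inequalities in $M$ and $N$), so passing to a common refinement that simultaneously approaches $d_M(Q)$ and $d_N(R)$ and then taking the supremum yields $d_K(P)^2 \geq d_M(Q)^2 + d_N(R)^2$.

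Assume now that $Q$ and $R$ are geodesics. Proportional parameterization gives $d_{M \times N}(P(s), P(t)) = |s-t|\sqrt{d_M(Q)^2 + d_N(R)^2}$, so $P$ is itself a proportionally parameterized geodesic in $M \times N$ of length $\sqrt{d_M(Q)^2 + d_N(R)^2}$. Because $d_K(x,y)$ is the infimum of $d_{M \times N}$-lengths of paths in $K$ joining $x,y$, a telescoping argument shows that the length of any continuous path in $K$ measured in $d_K$ coincides with its length measured in $d_{M \times N}$; hence $d_K(P) \leq \sqrt{d_M(Q)^2 + d_N(R)^2}$ as well, establishing equality in (\ref{eqn:PQR}). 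The chain $d_{M \times N}(P(0),P(1)) \leq d_K(P(0),P(1)) \leq d_K(P) = \sqrt{d_M(Q)^2 + d_N(R)^2}$ then collapses, showing $P$ is shortest; sandwiching $d_K(P(s),P(t))$ between $d_{M \times N}(P(s), P(t))$ and the length of $P|_{[s,t]}$ yields proportional parameterization in $K$, so $P$ is a geodesic.

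For uniqueness, let $P' = (Q', R')$ be another geodesic in $K$ from $P(0)$ to $P(1)$. Apply the first inequality to the sub-geodesics $P'|_{[0,s]}$ and $P'|_{[s,1]}$, and set $L := \sqrt{d_M(Q)^2 + d_N(R)^2}$, $u := (d_M(Q(0),Q'(s)), d_N(R(0),R'(s)))$, and $v := (d_M(Q'(s),Q(1)), d_N(R'(s),R(1)))$ in $\RR^2$. The bounds $\|u\| \leq sL$ and $\|v\| \leq (1-s)L$, combined with $\|u+v\| \geq L$ from the triangle inequalities in $M$ and $N$, force all these inequalities to be tight; tracing the equality cases yields $d_M(Q(0),Q'(s)) = s\,d_M(Q)$, $d_M(Q'(s),Q(1)) = (1-s)\,d_M(Q)$, and analogously for $N$. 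Thus $Q'(s)$ lies on some shortest $Q(0)$-to-$Q(1)$ path at proportional parameter $s$, so uniqueness of $Q$ forces $Q'(s) = Q(s)$, and likewise $R'(s) = R(s)$, whence $P' = P$. The principal subtlety is this last step: the single global bound on $P'$ must be bootstrapped into pointwise agreement with $Q$ and $R$ at every $s$ via a careful analysis of the equality cases in both the Euclidean and the metric triangle inequalities.
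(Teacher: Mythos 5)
Your proof of inequality (\ref{eqn:PQR}) is essentially the paper's argument: both bound $d_{M\times N}(P(t_{i-1}),P(t_i))$ from below by $\sqrt{a_i^2+b_i^2}$ for $a_i = d_M(Q(t_{i-1}),Q(t_i))$, $b_i = d_N(R(t_{i-1}),R(t_i))$, apply the Euclidean triangle inequality in $\RR^2$ (you write it as $\sum_i\sqrt{a_i^2+b_i^2}\geq\sqrt{(\sum a_i)^2+(\sum b_i)^2}$, the paper phrases it via a polygonal path $\gamma$), and pass to a common refinement so that $\sum a_i$ and $\sum b_i$ simultaneously approach $d_M(Q)$ and $d_N(R)$.

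Where you depart is the second statement. The paper simply invokes \cite[Proposition~5.3(3)]{BH} to say that $(Q,R)$ is a (unique) geodesic in $M\times N$ precisely when $Q$ and $R$ are, and tacitly uses the fact that a subspace carrying the induced length metric preserves path lengths to pass from $M\times N$ to $K$. You instead prove all of this from scratch: the proportional-parametrization computation shows $P$ is a geodesic in $M\times N$; the length-coincidence observation together with the sandwich $d_{M\times N}(P(0),P(1))\leq d_K(P(0),P(1))\leq d_K(P)$ shows it is a geodesic in $K$; and uniqueness is obtained by an equality-case analysis of both the Euclidean triangle inequality (showing $u$ and $v$ are parallel with $\|u\|=sL$) and the metric triangle inequalities in $M$ and $N$ (showing $u+v=(d_M(Q),d_N(R))$, so $u=s(d_M(Q),d_N(R))$). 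This buys you a self-contained argument and makes visible exactly where uniqueness of $Q$ and $R$ enters; the cost is that it is longer and the last step is terse. Specifically, to pass from the identities $d_M(Q(0),Q'(s))=s\,d_M(Q)$ and $d_M(Q'(s),Q(1))=(1-s)\,d_M(Q)$ to $Q'(s)=Q(s)$, you should either note that the same tightness forces $d_M(Q'|_{[0,s]})=s\,d_M(Q)$ and $d_M(Q'|_{[s,1]})=(1-s)\,d_M(Q)$ (so $Q'$ is itself a shortest path, hence a reparametrization of $Q$, and the distance-to-endpoint identities pin down the parametrization), or appeal to $M$ being geodesic to build a shortest path through $Q'(s)$; as written, ``lies on some shortest path'' presupposes one of these. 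That said, the idea is sound and the conclusion $P'=P$ is correct.

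Review: correct, with the same approach for the inequality and a more elementary, self-contained route (replacing the \cite{BH} citation by a direct equality-case analysis) for the geodesic and uniqueness claims.
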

\begin{proof}
	The latter statement follows from 
	a general property \cite[Proposition 5.3 (3)]{BH} 
	that $(Q,R)$ is a geodesic in 
	$M \times N$ if and only if $Q$ and $R$ are geodesics in $M$ and in $N$, respectively. 	
	We show the inequality (\ref{eqn:PQR}).
	Consider subdivision $0 = t_0 < t_1 < \cdots < t_m = 1$ of $[0,1]$.
	Define points $\gamma_k := (\sum_{i=1}^k d_M(Q(t_{i-1}),Q(t_{i})), \sum_{i=1}^k d_N(R(t_{i-1}),R(t_{i})))$ in $\RR^2$
	for $k=0,1,2,\ldots,m$. 
	Consider the polygonal path $\gamma$ 
	in $\RR^2$ obtained by connecting these points $\gamma_k$
	by segments $[\gamma_{k-1},\gamma_k]$.
	The length $d_{\RR^2}(\gamma)$ of the path $\gamma$ measured in Euclidean plane $\RR^2$ 
	is equal to $\sum_{i=1}^m d_{M \times N} (P(t_{i-1}), P(t_i)) (\leq d_K(P))$.
    By choosing a sufficiently fine subdivision of $[0,1]$, the end point
	$\gamma_m = \gamma(1)$ is arbitrarily close to $(d_M(Q), d_N(R))$.
	Namely, for arbitrary $\epsilon > 0$, we can choose a subdivision of $[0,1]$
	such that $\|\gamma(1) - \gamma(0)\|^2 \geq d_M(Q)^2 + d_N(R)^2 - \epsilon$.
	Thus we have 
	$d_K(P)^2 \geq d_{\RR^2}(\gamma)^2 \geq \|\gamma(1) - \gamma(0)\|^2 \geq d_M(Q)^2 + d_N(R)^2 - \epsilon$.
	Since $\epsilon >0$ was arbitrary, we have (\ref{eqn:PQR}).
\end{proof}
We will use this lemma in the following way: 
Given geodesics $Q$ and $R$ (that are easily obtained), 
if their product $(Q,R)$ belongs to $K$ (luckily), 
then $(Q,R)$ is a geodesic in $K$. 
\subsection{CAT(0) rooted cubical complex}\label{subsec:cubical}
A {\em cubical complex} is an $M_0$-polyhedral complex obtained 
by gluing Euclidean cubes of various dimensions.
We here consider a cubical complex 
in which all cubes contains a common vertex (root).
Such a cubical complex is naturally associated  
with an abstract simplicial complex as follows.
Let $V$ be a set. 
Consider $[0,1]^V$, i.e., the set of 
formal combinations of elements in $V$ for which 
the coefficients belong to $[0,1]$. 
Define metric $d$ on $[0,1]^V$ by the $l_2$-distance:
\begin{equation*}
d(x,y) := \sqrt{\sum_{v \in V} (x_v - y_v)^2} \quad (x,y \in [0,1]^V),
\end{equation*}
where the sum is a finite summation over $\supp x \cup \supp y$.
Let ${\cal S} \subseteq 2^V$ be an abstract simplicial complex on $V$, 
i.e., $S \subseteq S' \in {\cal S}$ implies $S \in {\cal S}$.
Suppose that the maximum cardinality of members in ${\cal S}$ is bounded by some constant.  
Let $K({\cal S}) \subseteq [0,1]^V$ be the subspace of all points $x$ with  
$\supp x \in {\cal S}$.
Namely $K({\cal S})$ is the union $\bigcup_{S \in {\cal S}}[0,1]^{S}$
of all Euclidean cubes $[0,1]^S$ over $S \in {\cal S}$.
Then $K({\cal S})$ is a complete geodesic space, and is called the 
{\em rooted cubical complex} associated with ${\cal S}$.
If $[0,1]$ is replaced by the set $\RR_+$ of nonnegative reals,
then the resulting complex is an {\em orthant space} 
in the sense of \cite{MOP15}. 
A rooted cubical complex is a strictly-convex subspace 
of the corresponding orthant space; 
consider strictly-nonexpansive retraction $\sum_v x_v v \mapsto \sum_v \min \{x_v, 1\} v$. 
Then results for rooted cubical complexes 
are easily adapted for orthant spaces, and vice versa. 
 
We are interested in CAT(0) rooted cubical complexes.
Then Gromov's characterization on CAT(1) all-right spherical complexes is rephrased as follows: 
\begin{Thm}[{Gromov; see \cite[Theorem 5.18]{BH}}]\label{thm:Gromov}
	For an abstract simplicial complex ${\cal S}$,
	the rooted cubical complex $K({\cal S})$ is CAT(0) if and only if 
	${\cal S}$ is a flag complex.
\end{Thm}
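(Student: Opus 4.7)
The plan is to deduce the theorem from two standard ingredients: the Cartan-Hadamard theorem and Gromov's link condition for cube complexes. First I would establish contractibility of $K({\cal S})$ via the straight-line homotopy $H(x,t):=(1-t)x$ toward the root $0$: since each cube $[0,1]^S$ for $S \in {\cal S}$ is star-shaped from $0$ and $\supp((1-t)x) \subseteq \supp x$, this homotopy stays inside $K({\cal S})$. Together with completeness (which follows from the bounded-cardinality hypothesis and the finite-isometry-type condition via \cite[Theorem I.7.19]{BH}) and the resulting simple connectivity, the Cartan-Hadamard theorem (\cite[Chapter II.4]{BH}) reduces the problem to showing that $K({\cal S})$ is \emph{locally} CAT(0) if and only if ${\cal S}$ is flag.

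Next I would invoke Gromov's link condition in its cubical form: a cube complex is locally CAT(0) if and only if the link of every vertex, equipped with the all-right spherical metric, is CAT(1), which for such spherical complexes holds precisely when the underlying simplicial complex is flag. The 0-cells of $K({\cal S})$ correspond bijectively to elements $T \in {\cal S}$ via $T \leftrightarrow \sum_{v\in T} v$. The key structural observation is that the link at such a vertex $T$ is the simplicial join of two pieces: the full simplex on the ``decrease'' directions $\{-v : v \in T\}$ (well-defined because ${\cal S}$ is closed under subsets, so every face $[0,1]^{T'}$ with $T' \subseteq T$ is present) and the ``upper link'' ${\cal S}_T := \{S \setminus T : T \subseteq S \in {\cal S}\}$ on the ``increase'' directions. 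At the root this specializes to the link being just ${\cal S}$ itself.

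For sufficiency, assuming ${\cal S}$ is flag, I would show that every ${\cal S}_T$ inherits flagness: pairwise joinedness of $v_1, \ldots, v_k \in V \setminus T$ in ${\cal S}_T$ forces $T \cup \{v_i,v_j\} \in {\cal S}$ for all pairs, and then every pair inside $T \cup \{v_1,\ldots,v_k\}$ lies in ${\cal S}$ by downward closure, so flagness of ${\cal S}$ yields $T \cup \{v_1, \ldots, v_k\} \in {\cal S}$, giving the required simplex in ${\cal S}_T$. Since simplicial joins of flag complexes are flag, every vertex link is flag, so $K({\cal S})$ is locally and hence globally CAT(0). For necessity, if ${\cal S}$ is not flag I would take a minimal set $\{v_1, \ldots, v_k\}$ (necessarily $k \geq 3$) with all pairs in ${\cal S}$ but the set itself not in ${\cal S}$; then the link at the root, which coincides with ${\cal S}$, directly fails the flag condition, so $K({\cal S})$ is not locally CAT(0) and hence not CAT(0).

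The principal obstacle I expect is the careful identification of the link at a non-root vertex as a simplicial join. That the ``decrease'' directions span a full simplex depends crucially on downward-closedness of ${\cal S}$, and showing that every mixed ``decrease-increase'' simplex comes from a single sub-cube of some $[0,1]^S$ with $T \subseteq S \in {\cal S}$ requires a careful matching between cubical faces containing $T$ and pairs of subsets $(T' , S')$ with $T' \subseteq T$ and $T \cup S' \in {\cal S}$. Once the join decomposition is in hand, the rest is driven entirely by two routine facts: spherical joins preserve CAT(1), and simplicial joins preserve flagness.
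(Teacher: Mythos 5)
The paper states this theorem without proof, citing \cite[Theorem 5.18]{BH}, and its outline section explicitly notes that Gromov's characterization is proved by verifying the link condition of each vertex link. Your argument — contractibility via straight-line homotopy toward the root, completeness from \cite[Theorem I.7.19]{BH}, Cartan-Hadamard to reduce to the local condition, and the identification of the link at a vertex $T$ as the simplicial join of the full simplex on the decrease directions $\{-v : v \in T\}$ (well-defined by downward-closedness of ${\cal S}$) with the upper link ${\cal S}_T$, followed by the routine facts that flagness passes to ${\cal S}_T$ and to joins — is correct and is exactly the standard proof the paper is pointing to.
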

Here ${\cal S}$ is called a {\em flag complex} 
if it satisfies the flag condition (FL):
\begin{itemize}
	\item[(FL)] $S \subseteq V$ belongs to ${\cal S}$ if and only if 
	every $2$-element subset of $S$ belongs to ${\cal S}$. 
\end{itemize}
Notice that a flag  complex ${\cal S}$ is precisely 
the simplicial complex of stable sets in a graph 
(with vertex set $V$ and edge set $E := \{ uv \mid \{u,v\} \not \in {\cal S} \}$).
Thus we obtain a CAT(0) rooted cubical complex from an arbitrary undirected graph $G$, 
which is denoted by $K(G)$.

We next study geodesics in $K = K(G)$ 
and explain Owen's formula of geodesics, 
which was originally obtained for the BHV tree spaces and extended to 
CAT(0) orthant spaces by  Miller, Owen and Provan~\cite{MOP15}.
We first consider the special case where $G$ is a finite bipartite graph 
with two color classes $B,C$.
We also suppose that $G$ has no isolated vertices. 
We explain in (R1) and (R2) below how to reduce 
geodesics in $K(G)$ for general $G$
to this special case.

An {\em arch} is a sequence $(B = U_0,U_1,\ldots,U_m = C)$ of stable sets in $G$
such that 
\begin{equation}\label{eqn:arch}
B \cap U_{i-1} \supset B \cap U_{i}, \quad C \cap U_{i-1} \subset C \cap U_{i} \quad (i=1,2,\ldots,m),
\end{equation}
where $\subset$ means proper inclusion.
The {\em path space} $K({\cal A})$ 
relative to an arch ${\cal A} = (B = U_0,U_1,\ldots,U_m = C)$ is 
the subcomplex of $K$ consisting of cubes for $U_i$ $(i =0,1,2,\ldots,m)$.
Namely $K({\cal A}) := \bigcup_{i=0}^m [0,1]^{U_i}$.
%
A {\em path-space geodesic} is a geodesic in some path space $K({\cal A})$. 

Let $x,y$ be points in $K$ with $\supp x = B$ and $\supp y =C$.
We consider path-space geodesics connecting $x,y$.
Let ${\cal A}$ be an arch $(B = U_0,U_1,\ldots,U_m = C)$.
Define $X_i,Y_i$ $(i=1,2,\ldots,m)$ by
\begin{eqnarray}
	X_i &:=& (B \cap U_{i-1}) \setminus (B \cap U_{i}), \label{eqn:X_i}\\
	Y_i &:=& (C \cap U_{i}) \setminus (C \cap U_{i-1}). \label{eqn:Y_i}
\end{eqnarray}
Then $U_k = X_m \cup X_{m-1} \cup \cdots \cup X_{k+1} \cup Y_{k} \cup Y_{k-1} \cdots \cup Y_{1}$.
Also define positives $\|X_i\|$ and $\|Y_i\|$ by 
\begin{eqnarray}
	\|X_i\|^2 & := & \sum_{b \in X_i} x^2_b, \label{eqn:|X_i|}\\
	\|Y_i\|^2 & := & \sum_{c \in Y_i} y^2_c, \label{eqn:|Y_i|}\
\end{eqnarray}
where $x = \sum_{b \in B} x_b b$ and $y = \sum_{c \in C} y_c c$.
Then we define $v({\cal A};x,y)$ by
\begin{equation}\label{eqn:v(A;x,y)}
	v({\cal A};x,y)  := \sqrt{\sum_{i=1}^m (\| X_i\| + \|Y_i\|)^2}.
\end{equation}
In fact, this quantity $v({\cal A};x,y)$ is a lower bound of  
the distance between $x,y$ in the path space $K({\cal A})$.
To see this fact, consider the complex $K({\cal A},X_i,Y_i) := K({\cal A})|_{X_i \cup Y_i}$ obtained by projecting $K({\cal A})$ 
to $[0,1]^{X_i \cup Y_i}$. Then $K({\cal A},X_i,Y_i)$ 
is the union of two cubes 
$[0,1]^{X_i}$ and $[0,1]^{Y_i}$ sharing exactly one point (the origin $0$).
Therefore the unique geodesic between $x|_{X_i \cup Y_i}$ and $y|_{X_i \cup Y_i}$
goes on the union of segments 
$[x|_{X_i \cup Y_i}, 0]$ and $[0, y|_{X_i \cup Y_i}]$ and has
length $\| X_i\| + \|Y_i\|$.
Since $X_i,Y_i$ $(i=1,2,\ldots,n)$ form a partition of $V$, 
the path space $K({\cal A})$ is viewed as a subspace of 
the product of $K({\cal A},X_i,Y_i)$ over $i=1,2,...,m$.
Thus, by Lemma~\ref{lem:product}, 
the length of any path connecting $x,y$ is bounded below by $v({\cal A};x,y)$.  

This motivates a condition for arch ${\cal A}$ to attain this lower bound, 
or equivalently, to lift these projected geodesics to a geodesic in $K({\cal A})$.
An arch $(B = U_0,U_1,\ldots,U_m = C)$ is said to be {\em $(x,y)$-concave} 
if it holds
\begin{equation}\label{eqn:concave}
	\frac{\|X_1\|}{\|Y_1\|} < \frac{\|X_2\|}{\|Y_2\|} < \cdots < \frac{\|X_m\|}{\|Y_m\|}.
\end{equation}
We see below 
that this condition is enough to such a lifting.
Before that, we give an alternative geometric interpretation of (\ref{eqn:concave}), which explains the meaning of terms ``arch" and ``concave."
Plot the points 
\begin{equation*}
	\xi_k :=  (\sum_{b \in U_k \cap B} x_b^2, \sum_{c \in U_k \cap C} y_c^2 ) = (\sum_{i=k+1}^m \|X_{i}\|^2,  \sum_{i=1}^{k} \|Y_i\|^2) \quad (k=0,\ldots,m)
\end{equation*}
in the plane $\RR^2$.
Draw a line between each of consecutive points $\xi_k,\xi_{k+1}$.
See Figure~\ref{fig:arch}.
\begin{figure} 
	\begin{center} 
		\includegraphics[scale=0.7]{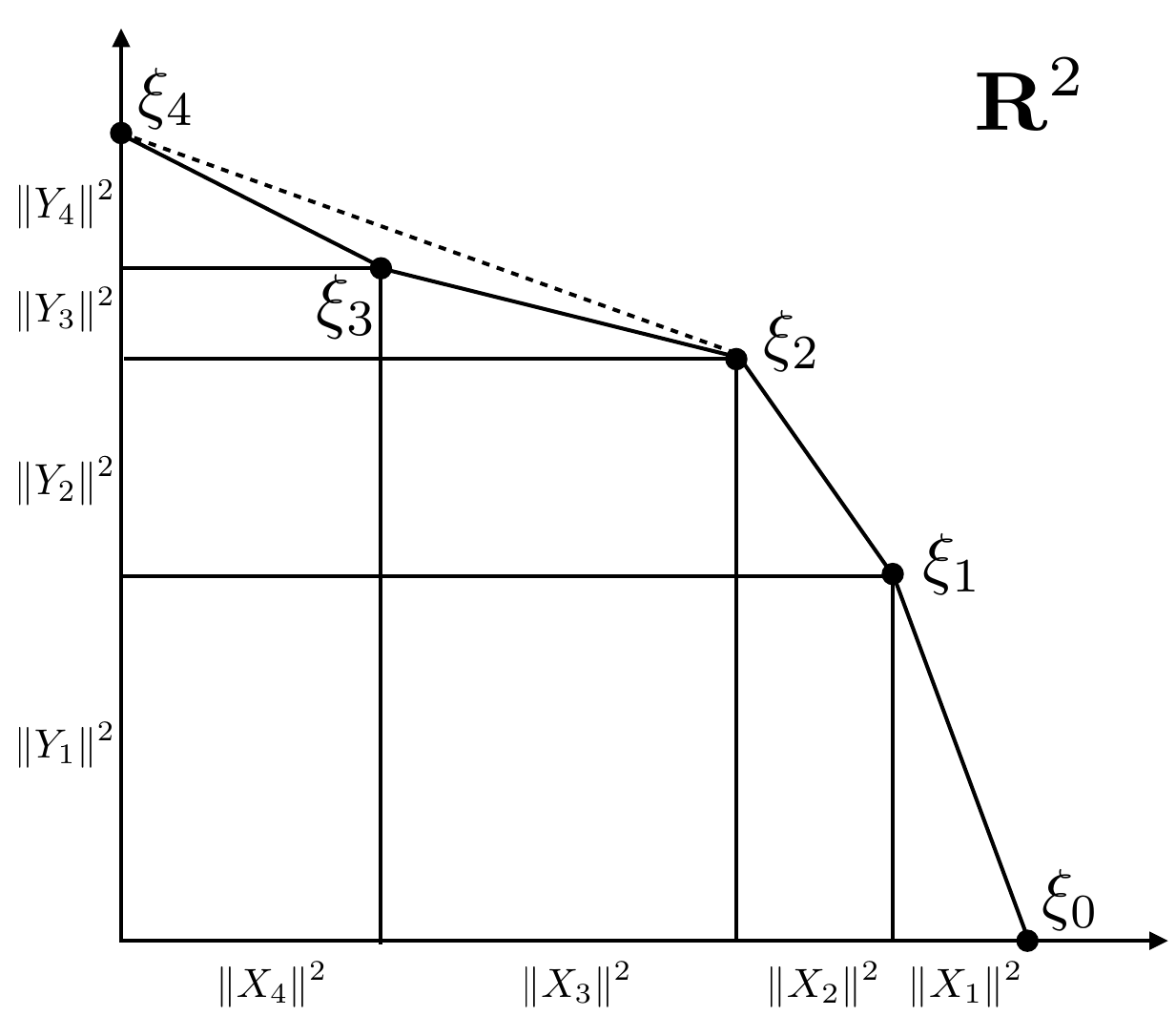}
		\caption{Polygonal representation of an arch with $\| X_1 \|/\|Y_1\| < \|X_2 \| / \|Y_2\| < \| X_3 \|/\|Y_3\| > \|X_4 \| / \|Y_4\|$}  
		\label{fig:arch}         
	\end{center}
\end{figure}

Then the condition (\ref{eqn:concave}) is equivalent to 
the condition that 
the resulting polygonal curve forms 
a concave arch with bending points $\xi_k$, or equivalently, that 
each $\xi_k$ is an extreme point of 
the convex hull of $(0,0)$ and $\xi_k$ $(k=0,1,2,\ldots,m)$.
\begin{Thm}[{\cite{Owen11}; also see \cite[Theorems 2.3 and 2.4]{OwenProvan11}}]\label{thm:owen}
	\begin{itemize}
		\item[{\rm (1)}]
	For an $(x,y)$-concave arch ${\cal A} = (B = U_0,U_1,\ldots,U_m = C)$, 
	the unique geodesic $t \mapsto 
	\sum_{b \in B} x_b(t) b + \sum_{c \in C} y_c(t) c$ 
	in the path space $K({\cal A})$
	is given by
	\begin{eqnarray}
		&& x_b(t) = \left\{
		\begin{array}{ll} \displaystyle
			 \left( 1 - t \frac{\|X_i\| + \|Y_i\| }{\|X_i\|} \right)x_b & \displaystyle
			{\rm if}\ 0 \leq t \leq \frac{\|X_i\|}{\|X_i\| + \|Y_i\|}, \\[1em]
			0 & \displaystyle {\rm if}\ \frac{\|X_i\|}{\|X_i\| + \|Y_i\| } \leq t \leq 1,
		\end{array}	
		\right. \nonumber \\[1em] 
		&& y_c(t) = \left\{
		\begin{array}{ll}\displaystyle
			0    & \displaystyle {\rm if}\ 0 \leq t \leq \frac{\|X_i\|}{\|X_i\| + \|Y_i\|}, \\[1em]
			\displaystyle \left( 1 - (1-t) \frac{\|Y_i\| + \|X_i\| }{\|Y_i\|} \right) y_{c}
			& \displaystyle {\rm if}\ \frac{\|X_i\|}{\|X_i\| + \|Y_i\| } \leq t \leq 1,
		\end{array}	
		\right. \nonumber \\[1em] 
		&& \hspace{5.5cm} (b \in X_i, c \in Y_i, i=1,2,\ldots,m), \label{eqn:formula}
	\end{eqnarray}
	and its length $d(P)$ is equal to $v({\cal A};x,y)$.
	\item[{\rm (2)}]
	Moreover any path-space geodesic in $K$ belongs to the path space 
	for some $(x,y)$-concave arch.
	\end{itemize}
\end{Thm}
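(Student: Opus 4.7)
For part (1), my plan is to apply Lemma~\ref{lem:product} iteratively to the product decomposition
\[
K({\cal A}) \hookrightarrow \prod_{i=1}^{m} K({\cal A},X_i,Y_i),
\]
which is an isometric embedding since $\{X_i\}_i\cup\{Y_i\}_i$ partitions $V=B\cup C$. In each factor $K({\cal A},X_i,Y_i)=[0,1]^{X_i}\cup[0,1]^{Y_i}$, the unique geodesic from $x|_{X_i}$ to $y|_{Y_i}$ passes through the origin with length $\|X_i\|+\|Y_i\|$, and under proportional parametrization reaches the origin at time $t_i:=\|X_i\|/(\|X_i\|+\|Y_i\|)$. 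The product of these factorwise geodesics is itself a geodesic of length $v({\cal A};x,y)$ in the product space by iterated use of Lemma~\ref{lem:product}; the remaining point is to verify that this product path stays inside the subspace $K({\cal A})$. At time $t$ its nonzero support equals $\bigcup_{t<t_i}X_i\cup\bigcup_{t>t_i}Y_i$, and by the definition of the $U_k$'s this support is some $U_k$ for every $t$ if and only if $t_1<t_2<\cdots<t_m$, which is exactly the concavity condition~(\ref{eqn:concave}). Under concavity the product path therefore lies in $K({\cal A})$ and is a unique geodesic there by Lemma~\ref{lem:product}, with length $v({\cal A};x,y)$ and coordinate form~(\ref{eqn:formula}).

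For part (2), given a path-space geodesic $P$ in $K({\cal A}')$, I aim to produce an $(x,y)$-concave arch ${\cal A}^*$ with $\mathrm{image}(P)\subseteq K({\cal A}^*)$. The plan is to let $\xi_{k_0},\xi_{k_1},\ldots,\xi_{k_s}$ (with $0=k_0<k_1<\cdots<k_s=m$) be the extreme points of the upper concave hull of $\{\xi_0,\xi_1,\ldots,\xi_m\}$ in $\RR^2$, set ${\cal A}^*:=(U_{k_0},U_{k_1},\ldots,U_{k_s})$ --- which is an arch by restricting~(\ref{eqn:arch}) and is $(x,y)$-concave by the geometric interpretation preceding the theorem --- and then apply part~(1) to obtain the geodesic $P^*$ in $K({\cal A}^*)\subseteq K({\cal A}')$ of length $v({\cal A}^*;x,y)$. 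If I can establish the matching lower bound $d_{K({\cal A}')}(x,y)\geq v({\cal A}^*;x,y)$, then $P^*$ is a geodesic of $K({\cal A}')$ as well, and uniqueness of geodesics in the CAT(0) space $K({\cal A}')$ forces $P=P^*\subseteq K({\cal A}^*)$.

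The main obstacle is precisely this sharpened lower bound: a direct use of Lemma~\ref{lem:product} only yields $d_{K({\cal A}')}(x,y)\geq v({\cal A}';x,y)$, and a short Cauchy--Schwarz comparison on consecutive blocks shows $v({\cal A}';x,y)\leq v({\cal A}^*;x,y)$, with equality exactly when ${\cal A}'$ is already concave, so the naive decomposition must be refined. My plan is to regroup the factors along the blocks of ${\cal A}^*$: for each block $[k_{j-1}+1,k_j]$, consider the coarsened projection of $K({\cal A}')$ onto the coordinates $X^*_j\cup Y^*_j$ where $X^*_j:=X_{k_{j-1}+1}\cup\cdots\cup X_{k_j}$ and $Y^*_j:=Y_{k_{j-1}+1}\cup\cdots\cup Y_{k_j}$. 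This coarsened factor is itself the path space of the sub-arch of ${\cal A}'$ spanning $U_{k_{j-1}}$ to $U_{k_j}$, and by the extremeness of $\xi_{k_{j-1}}$ and $\xi_{k_j}$ on the upper hull, the ratio sequence inside this sub-arch lies strictly below the chord, so its own concavification is the trivial two-term arch $(U_{k_{j-1}},U_{k_j})$. An induction on $m$ (applied to the shorter sub-arches) then yields that the geodesic length inside the coarsened factor equals $\|X^*_j\|+\|Y^*_j\|$, and squared-summing the blockwise contributions via Lemma~\ref{lem:product} delivers the required bound $v({\cal A}^*;x,y)$, closing the argument.
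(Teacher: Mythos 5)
Your argument for part~(1) is correct and matches the paper's: you decompose $K({\cal A})$ as a subspace of the product $\prod_i K({\cal A},X_i,Y_i)$, observe that each factor's unique geodesic passes through the origin at time $t_i=\|X_i\|/(\|X_i\|+\|Y_i\|)$, and check that the product path lies in $K({\cal A})$ exactly when the $t_i$ are strictly increasing, i.e.\ when~(\ref{eqn:concave}) holds; Lemma~\ref{lem:product} then supplies both the length computation and uniqueness.

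Part~(2), however, has a genuine gap. Your strategy is to guess the answer (the concave-hull arch ${\cal A}^*$), build $P^*$ via part~(1), and certify it by the lower bound $d_{K({\cal A}')}(x,y)\geq v({\cal A}^*;x,y)$, with the blockwise regrouping and induction supplying that bound. But the induction on $m$ does not close. When the concave hull of $\{(0,0),\xi_0,\ldots,\xi_m\}$ has only two nonzero extreme points --- one block, $k_0=0$, $k_1=m$ --- the coarsened factor \emph{is} $K({\cal A}')$ itself, the ``sub-arch'' is all of ${\cal A}'$, and the inductive hypothesis applies to nothing smaller. This one-block case is precisely where the real content lies: you must show that the geodesic passes through the origin even though every intermediate cube $[0,1]^{U_i}$ offers an apparent shortcut, and here the naive bound $v({\cal A}';x,y)$ is \emph{strictly less} than $\|x\|+\|y\|$ (by the very Cauchy--Schwarz comparison you cite), so it cannot certify the claim. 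The paper avoids this by working forward rather than backward (see the Appendix): it projects each block orthogonally onto the one-dimensional directions spanned by $x_{X_i}$ and $y_{Y_i}$, so the geodesic lands in a piecewise-Euclidean subset $K^{-b,a}\subseteq\RR^m$; it then reads off the partition $I$ of $[m]$ by the times at which the geodesic's coordinates vanish, shows $P$ lies in $K^{-b,a}_I\cong K^{-b^I,a^I}$, and proves (Lemma~\ref{lem:different}) that there $P$ is literally a straight line. Strict monotonicity of the vanishing times in the coarsened space then \emph{derives} the concavity of the resulting sub-arch, rather than positing ${\cal A}^*$ from the concave hull and verifying its optimality afterward. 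Some analogue of this straight-line argument, or a direct analysis of the one-block case, is needed to close your route. You also implicitly invoke the CAT(0) property of $K({\cal A}')$ for uniqueness; this is true (the supports $\{S:S\subseteq U_i\text{ for some }i\}$ do form a flag complex, so Theorem~\ref{thm:Gromov} applies), but the paper deliberately avoids that appeal because the same reduction machinery is reused in Section~\ref{sec:modularsemilattice}, where CAT(0) is what is being established.
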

Indeed, the projection $P|_{X_i \cup Y_i}$ of the path $P$ (\ref{eqn:formula}) is 
the unique geodesic between $x|_{X_i \cup Y_i}$ and $y|_{X_i \cup Y_i}$ 
in $K({\cal A},X_i,Y_i) = [0,1]^{X_i} \cup [0,1]^{Y_i}$ (union of two cubes).
Therefore, by the above argument and Lemma~\ref{lem:product}, 
for proving (1)	it suffices to verify that $P$ belongs to $K({\cal A})$, 
i.e., the nonzero support of 
$\sum_{b \in B} x_b(t) b + \sum_{c \in C} y_c(t) c$
for every $t \in [0,1]$ is a subset of a stable set in ${\cal A}$.
Notice from the concavity condition (\ref{eqn:concave}) that it holds that
\[
0 < \frac{\|X_1\|}{\|X_1\|+ \|Y_1\|} < \frac{\|X_2\|}{\|X_2\|+ \|Y_2\|} < \cdots < \frac{\|X_m\|}{\|X_m\|+ \|Y_m\|} < 1. 
\]
Accordingly the nonzero support of $P(t)$ changes as 
\[
B = U_0, U_0 \cap U_1, U_1,U_1 \cap U_2, U_2, \ldots,U_m=C.
\]

Proving (2) needs more effort.
In fact, the geodesic $P$ in $K({\cal A})$ for a nonconcave arch ${\cal A}$ belongs to 
a subspace $K({\cal A}') \subseteq K({\cal A})$ for 
a concave subarch ${\cal A}'$ of ${\cal A}$ 
(that corresponds to the extreme points of the convex hull of $(0,0)$ and $\xi_k$s).
In Appendix, 
we give a proof of this fact 
by simplifying the argument of Owen~\cite[Sections 4.1 and 4.2]{Owen11}.

Notice that the above argument 
does not use the CAT(0) property of $K(G)$.
In fact, 
without knowing the CAT(0) property of $K(G)$ (Theorem~\ref{thm:Gromov}) 
and Lemma~\ref{lem:CAT(0)=uniquegeodesic}, 
the unique geodesic property of $K(G)$
can directly be derived from the reduction techniques (R1-R4) below.

Consider points $x,y$ in $K(G)$ for general graph $G$. 
Then the situation reduces to the above special case 
by the following (R1) and (R2).
\begin{itemize}
	\item[(R1)] Let $B := \supp x$, $C := \supp y$.
	Consider the projection $z \mapsto z|_{B \cup C}$, 
	which is a strictly-nonexpansive retraction and fixes $x,y$.
	Hence any geodesic between $x,y$ must belong to 
	the subcomplex $K(\tilde G)$, 
	where $\tilde G$ is the subgraph of $G$ induced by $B \cup C$.
	Then $\tilde G$ is a bipartite graph 
	with color classes $B \setminus Z$, $C \setminus Z$, 
	where $Z \supseteq B \cap C$ is the set of isolated vertices in $\tilde G$.
	\item[(R2)] Hence we may assume from the first that $G$ is 
	such a bipartite graph.
	Then $K(G) = K(G') \times [0,1]^Z$, where $G'$ 
	is the subgraph of $G$ induced by non-isolated vertices.
	Then a geodesic in $K(G)$ is the product of geodesics 
	in $K(G')$ and in $[0,1]^Z$ (Lemma~\ref{lem:product}).
\end{itemize}
Thus the geodesic problem reduces to the above bipartite case.
In addition to Theorem~\ref{thm:owen}, 
to establish the unique geodesic property, 
two more properties are needed:
\begin{itemize}
	\item[(R3)] For points $x,y \in K(G)$ with $\supp x = B$, $\supp y = C$, 
	every geodesic connecting $x,y$ belongs to the path space $K({\cal A})$ for some arch ${\cal A}$.  
	\item[(R4)] There is a unique arch ${\cal A}^*$ 
	that attains the minimum $\min_{\cal A} v({\cal A};x,y)$. 
\end{itemize}
Let us outline the proof of (R3) and (R4); 
we will prove them in more general setting of modular semilattices. 
Suppose that a geodesic $P$ passes through cubes 
$[0,1]^{U_0}$, $[0,1]^{U_1},\ldots$.
Suppose that $U_{i+1} \cap B \not \subseteq U_{i} \cap B$. 
Consider the projection that sends coefficients of $(U_{i+1} \setminus  U_{i}) \cap B$ 
to zero, which is a strictly-nonexpansive retraction fixing $y$. 
Apply this map to $P$ from the moment when $P$ enters $[0,1]^{U_{i+1}}$, 
which yields an $(x,y)$-path with the length not greater than $d(P)$.
Hence $U_{i+1} \cap B \subseteq U_{i} \cap B$ necessarily holds, 
and consequently $(B = U_0, U_1,...)$ becomes an arch to establish (R3). 

For (R4), recalling Figure~\ref{fig:arch}, 
associate each stable set $U \in {\cal S}(G)$ 
with point $\xi(U) := (\sum_{b \in U \cap B} x_b^2, \sum_{c \in U \cap C}  y_c^2)$ in $\RR^2$.
Consider the convex hull $Q$ of all points $\xi(U)$ for $U \in {\cal S}(G)$, 
which is contained in square $[0, \|x\|^2] \times [0, \|y\|^2]$ 
and contains $(0,0), (\|x\|^2, 0), (0, \|y\|^2)$
as extreme points.
Then the sequence of stable sets mapped to nonzero extreme points
is the unique arch ${\cal A}^*$ that attains the minimum of $v({\cal A}; x,y)$.

\begin{Rem}\label{rem:MSSP}
	Now the geodesic can be constructed via arch ${\cal A}^*$, 
	which is algorithmically obtained as follows. 
	Indeed, ${\cal A}^*$ can be found by the linear optimization over $Q$ with objective vector 
	$((1- \lambda), \lambda))$ for parameter $\lambda \in [0,1]$. 
	This is equivalent to
	the following problem MSSP --- the maximum weight stable set problem in bipartite graph $G$ --- 
	with parameter $\lambda \in [0,1]$:
	\begin{eqnarray}
	{\rm MSSP}: \quad {\rm Max}. && (1- \lambda) \sum_{b \in B \cap U} x_b^2 + \lambda \sum_{c \in C \cap U} y_c^2 \nonumber \\
	{\rm s.t.} && U: \mbox{stable set of $G$}.  \nonumber
	\end{eqnarray}
	As did in~\cite{OwenProvan11} (see also \cite[Chapter 8]{BacakBook}), 
	MSSP reduces to  
	to the minimum cut problem in the network
	constructed from $G$ and $x,y$; see Figure~\ref{fig:network}.
	Then the cut $T$ having the minimum cut capacity
	corresponds to stable set $(T \cap B) \cup (C \setminus T)$ having the maximum weight. 
	Hence, via the max-flow min-cut theorem, 
	the arch is obtained by a (parametric) maximum flow algorithm. 
\end{Rem}

\begin{figure} 
	\begin{center} 
		\includegraphics[scale=0.8]{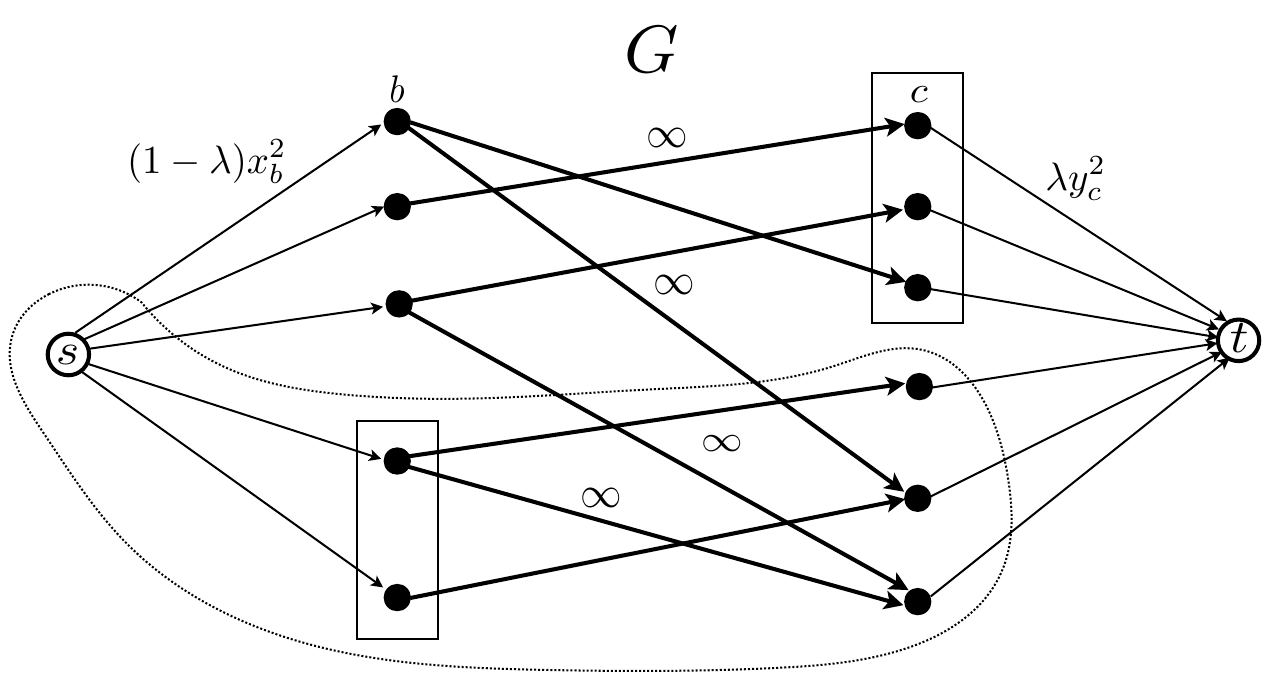}
		\caption{The network for MSSP: The cut surrounded by the dotted line has finite capacity 
			and corresponds to the stable set indicated by the square boxes.}  
		\label{fig:network}         
	\end{center}
\end{figure}

\subsection{Orthoscheme complex}\label{subsec:orthoscheme}

Here we formally introduce the orthoscheme complex of a graded poset.
For this purpose, we need to set up basic terminologies on posets. 
A {\em poset} ({\em partially order set}) is a set ${\cal P}$ 
endowed with a partial order relation $\preceq$ on ${\cal P}$, 
where $p \prec q$ means $p \preceq q$ and $p \neq q$.
A pair $p,q \in {\cal P}$ is said to be {\em comparable} if $p \preceq q$ or $q \preceq p$, 
and {\em incomparable} otherwise.
The {\em interval} $[p,q]$ of elements $p,q$ with $p \preceq q$ is the set of elements 
$u$ with $p \preceq u \preceq q$.
If $[p,q] = \{p,q\}$ and $p \neq q$, then we say that $q$ {\em covers} $p$, 
and $(p,q)$ is called a {\em covering pair}.
A {\em chain} of poset ${\cal P}$ is a pairwise comparable set 
of elements, which is often denoted by $p_0 \prec p_1 \prec \cdots$.
The {\em length} of a chain is defined as its cardinality minus one. 
A {\em grade function} of ${\cal P}$ is 
an integer-valued function $r:{\cal P} \to \ZZ$ 
such that $r(q) = r(p) + 1$ holds for all covering pairs $(p,q)$.
A poset ${\cal P}$ is called {\em graded} if it admits a grade function.
For $p \preceq q$, let $r[p,q] := r(q) - r(p)$, which is equal 
to the length of any maximal chain from $p$ to $q$. 
For posets ${\cal P}$ considered in this paper, we assume:
\begin{itemize}
\item[(F)] There is a finite number $N$ such that the length of every chain 
 is at most $N$.
\end{itemize}

Let ${\cal P}$ be a graded poset with grade function $r$.
The {\em simplex} of a chain $p_0 \prec p_1 \prec \cdots \prec p_n$
is the set of all formal convex combinations $x = \sum_{i=0}^n \lambda_i p_i$ 
of elements in the chain, where ``convex" means that  
the coefficients $\lambda_i$ satisfy
$\sum_{i=0}^n \lambda_i = 1$ and $\lambda_i \geq 0$ $(i=0,1,\ldots,n)$. 
Let $K({\cal P})$ denote the union of all simplices of chains in ${\cal P}$, or equivalently, the set of all formal convex 
combinations $x$ of elements in ${\cal P}$ 
such that  $\supp x$ is a chain of ${\cal P}$.
In other words, $K({\cal P})$ is 
a geometric realization of the order complex of ${\cal P}$.
Next we define a metric on $K({\cal P})$.
For a simplex $\varDelta$ of a chain 
$p_0 \prec p_1 \prec \cdots \prec p_n$, 
define map $\varphi_{\varDelta}: \varDelta \to \RR^{n}$ by
\begin{equation*}
\varphi_{\varDelta} (x) :=  \sum_{i=1}^n \lambda_i (e_1 + e_2 + \cdots + e_{r[p_0,p_i]}), 
\quad (x =\sum_{i=0}^n \lambda_i p_i \in K({\cal P})).
\end{equation*}
If $p_i$ covers $p_{i-1}$ for each $i$, then the above $\varphi_{\varDelta} (x)$ is also written as 
\begin{equation}\label{eqn:varphi_delta2} 
\varphi_{\varDelta} (x) =  \sum_{i=1}^{n} (\lambda_i + \cdots + \lambda_{n}) e_i.
\end{equation} 
For two points $x,y \in K({\cal P})$ belonging to 
a common simplex $\varDelta$, define distance $d(x,y)$ by the $l_2$-distance in the image 
$\varphi_{\varDelta}(\varDelta)$:
\begin{equation}\label{eqn:d_Delta}
d(x, y) := 
\| \varphi_{\varDelta}(x) - \varphi_{\varDelta}(y)  \| \quad (x,y \in \varDelta).
\end{equation}
Namely, $\varphi_{\varDelta}$ maps a chain 
to vertices of the orthoscheme (\ref{eqn:orthoscheme}).
Accordingly, points in the simplex 
and their distance are realized in the orthoscheme in Euclidean space $\RR^n$.
Note that the distance (\ref{eqn:d_Delta}) 
does not depend on the choice of a common simplex.
Also the neighborhood of each point is determined, which  
generates a topology on $K({\cal P})$.
The length $d(P)$ of a path $P$ in $K({\cal P})$
is defined by (\ref{eqn:length}), 
where $t_i$s are taken so that 
$P(t_i),P(t_{i+1})$ belong to a common simplex
and their distance is measured by (\ref{eqn:d_Delta}).
For the distance $d(x,y)$ of arbitrary points $x,y \in K({\cal P})$
is defined as the infimum of $d(P)$ for all $(x,y)$-paths.
The resulting metric simplicial complex $K({\mathcal P})$
is called the {\em orthoscheme complex} of ${\mathcal P}$.
By the assumption~(F),  
$K({\cal P})$ is an $M_0$-simplicial complex 
with finite isometry types of simplices, and is a complete geodesic space.

Let ${\cal P}, {\cal P}'$ be graded posets.
A map $\phi:{\cal P} \to {\cal P}'$ 
is called {\em order-preserving} if 
$\phi(p) \preceq \phi(q)$ holds for all $p,q \in {\cal P}$ with $p \preceq q$.
An order-preserving map $\phi:{\cal P} \to {\cal P}'$ maps a chain in ${\cal P}$ to a chain in ${\cal P}'$, 
and hence is extended 
to a map $\phi:K({\cal P}) \to K({\cal P}')$ in a natural way:
\[
\phi(x) := \sum_{i} \lambda_i \phi(p_i) \quad (x = \sum_{i} \lambda_i p_i \in K({\cal P})).
\] 
An order-preserving map $\phi:{\cal P} \to {\cal P}'$
is called {\em nonexpansive} if for every covering pair $(p,q)$ in ${\cal P}$, 
$(\phi(p), \phi(q))$ is a covering pair or $\phi(p) = \phi(q)$.
\begin{Lem}\label{lem:strictly_decrease}
	Let $\phi:{\cal P} \to {\cal P}'$ be a nonexpansive order-preserving map.
	\begin{itemize}
		\item[{\rm (1)}] The extension $\phi:K({\cal P}) \to K({\cal P}')$ is nonexpansive (and continuous).
		\item[{\rm (2)}] 
		For points $x = \sum_{i=0}^n \lambda_i p_i$, $x' = \sum_{i=0}^n \lambda'_i p_i \in K({\cal P})$ in a common simplex,
		if $\phi(p_i) = \phi(p_{i-1})$ and $\lambda_i + \lambda_{i+1} + \cdots + \lambda_n \neq \lambda'_i + \lambda'_{i+1} + \cdots + \lambda'_n$, 
		then it holds 
		\[
		d(\phi(x), \phi(x')) < d(x,x').
		\]
	\end{itemize}
\end{Lem}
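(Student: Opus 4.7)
The plan is to reduce both parts to a coordinate-projection picture on the Euclidean coordinates produced by $\varphi_\varDelta$. The first step is to refine the chain $p_0 \prec p_1 \prec \cdots \prec p_n$ to a maximal chain of covering pairs by inserting the missing comparable elements with coefficient zero. This leaves $x$ and $x'$ unchanged as points of $K({\cal P})$, leaves $\varphi_\varDelta(x)$ and $\varphi_\varDelta(x')$ unchanged, and places us in the regime where formula (\ref{eqn:varphi_delta2}), $\varphi_\varDelta(x) = \sum_{i=1}^n (\lambda_i + \cdots + \lambda_n) e_i$, is available.

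Next I would exploit the nonexpansive hypothesis. For each covering pair $(p_{i-1}, p_i)$, either $\phi(p_{i-1}) = \phi(p_i)$ or $(\phi(p_{i-1}), \phi(p_i))$ is itself a covering pair in ${\cal P}'$. Let $0 = i_0 < i_1 < \cdots < i_m$ be those indices for which $\phi(p_{i_k}) \neq \phi(p_{i_k-1})$, and set $q_k := \phi(p_{i_k})$. Then $q_0 \prec q_1 \prec \cdots \prec q_m$ is a chain of covering pairs in ${\cal P}'$, and grouping coefficients by their image gives $\phi(x) = \sum_{k=0}^m \mu_k q_k$ with $\mu_k = \sum_{i_k \leq i < i_{k+1}} \lambda_i$ (using the convention $i_{m+1} := n+1$). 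Applying (\ref{eqn:varphi_delta2}) on the image side, the $k$-th coordinate of $\varphi_{\varDelta'}(\phi(x))$ is $\mu_k + \cdots + \mu_m = \lambda_{i_k} + \cdots + \lambda_n$, which is exactly the $i_k$-th coordinate of $\varphi_\varDelta(x)$. Thus $\varphi_{\varDelta'} \circ \phi$ factors through $\varphi_\varDelta$ followed by the orthogonal projection $\pi : \RR^n \to \RR^m$ onto coordinates $i_1, \ldots, i_m$.

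Because a coordinate projection is $1$-Lipschitz, this yields $d(\phi(x), \phi(y)) \leq d(x, y)$ whenever $x, y$ lie in a common simplex. For arbitrary $x, y \in K({\cal P})$ I would pass to the path-length definition (\ref{eqn:length}): every $(x,y)$-path admits a polygonal refinement through finitely many simplices, and applying $\phi$ simplex-by-simplex produces a path in $K({\cal P}')$ of no greater length, so taking the infimum over such paths yields part~(1); continuity is automatic from the Lipschitz bound.

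Part~(2) falls out of the same factorization: the hypothesis $\phi(p_i) = \phi(p_{i-1})$ is precisely the statement that $i \notin \{i_1, \ldots, i_m\}$, so $\pi$ discards the $i$-th coordinate, while the hypothesis $\lambda_i + \cdots + \lambda_n \neq \lambda'_i + \cdots + \lambda'_n$ asserts that this discarded coordinate of $\varphi_\varDelta(x) - \varphi_\varDelta(x')$ is nonzero; since the Euclidean norm strictly decreases when a nonzero coordinate is dropped, $d(\phi(x), \phi(x')) < d(x, x')$. I do not anticipate a genuine obstacle here: the only bookkeeping is to verify that the zero-padding refinement is compatible with the simplicial extension of $\phi$, which it is, since that extension is linear in the $\lambda_j$ and padded zero coefficients contribute nothing on either side.
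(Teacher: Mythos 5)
Your proposal is correct and follows essentially the same route as the paper: refine to a covering chain, identify the index set where $\phi$ strictly increases, observe that $\varphi_{\varDelta'}\circ\phi$ is the composition of $\varphi_\varDelta$ with a coordinate projection, and conclude nonexpansiveness, with strictness when a nonzero coordinate is discarded. Your additional remark on passing from the local (common-simplex) bound to the global bound via polygonal paths makes explicit a step the paper leaves implicit, but the underlying argument is identical.
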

\begin{proof}
	(1). Take arbitrary 
	two points $x = \sum_{i=0}^n \lambda_i p_i$ and $x' = \sum_{i=0}^n \lambda_i' p_i$ 
	in a common simplex in $K({\cal P})$.
	We can assume that each $(p_i,p_{i+1})$ is a covering pair.
	It suffices to show $d(\phi(x),\phi(x')) \leq d(x,x')$.
	Let $I$ denote the set of indices $i (> 0)$ with $\phi(p_i) \succ \phi(p_{i-1})$ 
	(i.e., $\phi(p_i)$ covers $\phi(p_{i-1})$).
	Suppose that $I = \{i_1,i_2,\ldots,i_k\}$ for $i_1 < i_2 < \cdots < i_k$.
	Then we have
	$\phi(x) = \sum_{l=0}^k (\lambda_{i_l} + \lambda_{i_l+1} + \cdots + \lambda_{i_{l+1} -1}) \phi(p_{i_l})$ and
	$\phi(x') = \sum_{l=0}^k (\lambda'_{i_l} + \lambda'_{i_l+1} + \cdots + \lambda'_{i_{l+1} -1}) \phi(p_{i_l})$, 
	where $i_0 := 0$.
	By (\ref{eqn:varphi_delta2}), we have
	\begin{eqnarray}
	d(\phi(x),\phi(x'))^2 & = &\sum_{i \in I}(\lambda_{i} + \lambda_{i+1} + \cdots + \lambda_n 
	- \lambda'_{i} - \lambda'_{i+1} - \cdots - \lambda'_n)^2 \label{eqn:d(phi(x),phi(x')} \\
	& \leq & \sum_{i=1}^n(\lambda_{i} + \lambda_{i+1} + \cdots + \lambda_n 
	- \lambda'_{i} - \lambda'_{i+1} - \cdots - \lambda'_n)^2 \nonumber \\
	& =&  d(x,y)^2. \nonumber
	\end{eqnarray}

	(2). In the above inequality, 
	the index $i$ does not belong to $I$,  has nonzero term, 
	and the inequality holds strictly. 
\end{proof}

To proceed the argument, we need further notation 
on lattice and semilattice.
The {\em join} and {\em meet} of two elements $p,q$ in a poset ${\cal P}$ are 
the unique minimum common upper bound and
 the unique maximum common lower bound, respectively, of $p,q$.
 The join and meet of $p,q$ (if they exist) are denoted by $p \vee q$ and $p \wedge q$, respectively. 
A {\em lattice} is a poset in which every pair of elements has both join and meet. 
A {\em (meet-)semilattice} is a poset in which every pair of elements has meet.
The minimum element in a semilattice is denoted by $0$.
We only consider semilattices that are graded, where
the grade of the minimum element $0$ is supposed to be $0$, and 
the grade of an element $p$ is also called the {\em rank} of~$p$.
The maximum rank of an element is 
called the {\em rank} of the semilattice, which is finite by (F).  
In a semilattice,
two elements $p,q$ are said to be {\em bounded} if they have a common upper bound.
Notice that $p$ and $q$ are bounded if and only if the join $p \vee q$ exists, 
which is given by the meet of all common upper bounds of $p,q$. 

An {\em ideal} in a poset ${\cal P}$ is a subset $S$ such 
that $p \preceq q \in S$ implies $p \in S$.
For an element $a$ of a poset ${\cal P}$, 
the {\em principal ideal} ${\cal I}(a)$ of $a$
is defined as the set of all elements $p$ with $p \preceq a$.
Dually the {\em principal filter} ${\cal F}(a)$ of $a$
is defined as the set of all elements $p$ with $p \succeq a$.
A {\em subsemilattice} of a semilattice ${\cal L}$ is 
a subset that is closed under meet. 
A {\em sublattice} is a subset that  is closed under meet and join.
\begin{Ex}\label{ex:distributivelattice}
	A {\em distributive lattice} is a lattice ${\cal D}$ 
	satisfying distributive law 
	$p \wedge (q \vee q') = (p \wedge q) \vee (p \wedge q')$ 
	and $p \vee (q \wedge q') = (p \vee q) \wedge (p \vee q')$ 
	for every triple $p,q,q' \in {\cal L}$.
	The family of ideals in a (finite) poset ${\cal P}$ is a distributive lattice,  
	where the partial order relation on ideals is defined by inclusion order; 
	then $\wedge = \cap$ and $\vee = \cup$.  
	Birkhoff representation theorem says that
	any distributive lattice is always obtained in this way; see \cite[Chapter V]{Birkhoff} and \cite[Chapter II]{Gratzer}.
	
	Suppose that a distributive lattice ${\cal D}$ is represented by a poset ${\cal P}$. 
	Then  \cite[Proposition 7.11]{CCHO14} shows that the orthoscheme complex $K({\cal D})$ is isomorphic to the convex polytope
	\begin{equation}\label{eqn:orderpolytope}
	\{ x \in [0,1]^{\cal P}  \mid x_v \geq x_u\ (u,v \in {\cal P}: u \preceq v) \}
	\end{equation}
	in Euclidean space $\RR^{\cal P}$, which is known as the {\em order polytope} of ${\cal P}$. 
\end{Ex}

\begin{Ex}\label{ex:Booleansemilattice}
	A {\em Boolean lattice} is a distributive lattice such 
	that every element $p$ has an element $q$, called a {\em complement} of $p$,  
	such that $p \wedge q = 0$ and $p \vee q = 1$ (the maximum element). 
	A Boolean lattice here is a lattice $2^{V}$ of all subsets of 
	a finite set $V$.
	By (\ref{eqn:orderpolytope}) with regarding $V$ as a poset with no relation, 
	the orthoscheme complex $K(2^V)$ 
	is isometric to Euclidean cube $[0,1]^V$.
	Consequently, the rooted cubical complex $K({\cal S})$ is also the orthoscheme complex $K({\cal S})$, 
	where the abstract simplicial complex ${\cal S}$ is regarded 
	as a (graded) poset ordered by inclusion.
	The poset of an abstract simplicial complex 
	 is identified with a semilattice ${\cal B}$ such that 
	every principal ideal is a Boolean lattice.
	Indeed, consider the set $V$ of all rank-1 elements of ${\cal B}$, and 
	consider the abstract simplicial complex ${\cal S}$ on $V$ consisting 
	of all subsets $S$ with $\bigvee S \in {\cal B}$. Then ${\cal S}$ is isomorphic to ${\cal B}$.

	A flag simplicial complex is equivalent to 
	a {\em Boolean semilattice}, 
	which is defined as a semilattice ${\cal B}$
	such that every principal ideal of ${\cal B}$ is a Boolean lattice 
	and ${\cal B}$ satisfies the following lattice-theoretic flag condition:
	\begin{itemize}
		\item[(LFL)] For every triple $u,v,w$ of elements, 
		their join $u \vee v \vee w$ exists if and only if
	 all of $u \vee v$, $v \vee w$, $w \vee u$ exist.
	\end{itemize}
	Indeed, in the above construction of ${\cal S}$, 
	(LFL) is rephrased as: For $S,T,U \in {\cal S}$, $S \cup T \cup U \in {\cal S}$ if and only 
	if $S \cup T,T \cup U, U \cup S \in {\cal S}$.
	It is easy to see (by induction) that this is equivalent to (FL). 
	Thus $K({\cal B})$ is isometric to CAT(0) rooted cubical complex $K(G)$. 
\end{Ex}
We see in the next subsection
a common generalization of a distributive lattice and Boolean semilattice.

Let ${\cal P}$ be a graded poset.
Even if a subset ${\cal P}' \subseteq {\cal P}$ 
becomes a graded poset by the restriction of $\preceq$, 
the orthoscheme complex $K({\cal P}')$, which is a subset of $K({\cal P})$,  
is not necessarily a subspace of $K({\cal P})$, 
since their metrizations may be different. 
A necessary and sufficient condition
for $K({\cal P}')$ to be a subspace of $K({\cal P})$ 
is the following rank-preserving property: 
\begin{itemize}
\item[(RP)] Any covering pair of ${\cal P}'$ is a covering pair of ${\cal P}$.	
\end{itemize}
Then the shape of each simplex in $K({\cal P}')$ 
is the same in $K({\cal P})$, and $K({\cal P}')$ is viewed as a subspace of $K({\cal P})$.
Examples of such subsets include intervals, principal ideals, and filters. 

Consider maps $p \mapsto a \wedge p$ and $p \mapsto a \vee p$, 
when they are defined for all $p$.
Then they are obviously order-preserving, and extended to 
$K({\cal P}) \to K({\cal I}(a))$ and $K({\cal P}) \to K({\cal I}(a))$. 
We are interested in the situation where they are nonexpansive (retractions).
An element $a$ in ${\cal L}$ is called {\em modular} 
if $a$ has join $x \vee a$ and meet $x \wedge a$ with every element $x$, and satisfy 
\begin{equation}
r(a) + r(x) = r(a \vee x) + r(a \wedge x).
\end{equation}
\begin{Lem}\label{lem:decomposition}
	Let $a \in {\cal P}$ be a modular element.
	\begin{itemize}
		\item[{\rm (1)}] Maps $a \wedge$ and $a \vee$ are order-preserving nonexpansive retractions to ${\cal I}(a)$ and to ${\cal F}(a)$, respectively.
		\item[{\rm (2)}] Subspaces $K({\cal I}(a))$ and $K({\cal F}(a))$ are strictly-convex.
		\item[{\rm (3)}] For a path $P$ in $K({\cal P})$, it holds
		\begin{equation}\label{eqn:decomposition}
		d(P)^2 \geq d(a \vee P)^2 + d( a \wedge P)^2.
		\end{equation}
		If both $a \vee P$ and $a \wedge P$ are geodesics, 
		then the equality holds in (\ref{eqn:decomposition}) and $P$ is a geodesic.
	\end{itemize}
\end{Lem}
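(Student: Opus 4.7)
The plan is to derive all three parts from the modular rank identity, packaged through the two tools the paper has already established: Lemma~\ref{lem:strictly_decrease} (nonexpansive order-preserving maps) and Lemma~\ref{lem:product} (products of geodesics). For part~(1), order-preservation of $p \mapsto a \wedge p$ and $p \mapsto a \vee p$ and their retraction properties onto ${\cal I}(a)$ and ${\cal F}(a)$ are immediate, so I only need to verify the covering-pair hypothesis of Lemma~\ref{lem:strictly_decrease}. Applying the modular identity to both ends of a covering pair $(p,q)$ yields
\[
(r(a \vee q) - r(a \vee p)) + (r(a \wedge q) - r(a \wedge p)) = r(q) - r(p) = 1.
\]
Both summands are nonnegative integers with $a \vee p \preceq a \vee q$ and $a \wedge p \preceq a \wedge q$, so exactly one of them equals $1$. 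This forces the dichotomy: either $(a\wedge p, a\wedge q)$ is a covering pair and $a\vee p = a\vee q$, or symmetrically $(a\vee p, a\vee q)$ is a covering pair and $a\wedge p = a\wedge q$. Lemma~\ref{lem:strictly_decrease}(1) then promotes both maps to nonexpansive retractions on $K({\cal P})$.

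For part~(2), I show that $a\wedge\cdot$ is in fact a \emph{strictly}-nonexpansive retraction onto $K({\cal I}(a))$; the $\vee$ case is dual. For a polygonal path $P$ joining two points of $K({\cal I}(a))$ that exits $K({\cal I}(a))$, $P$ must traverse a simplex whose supporting chain contains an element $p_i \notin {\cal I}(a)$, and the first such transition is a $\vee$-step by the dichotomy, so $a \wedge p_i = a \wedge p_{i-1}$. Since the coefficient of $p_i$ is nonzero along an interior segment of $P$, the tail-sum hypothesis of Lemma~\ref{lem:strictly_decrease}(2) is triggered there, yielding $d(a \wedge P) < d(P)$.

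For part~(3), the dichotomy from part~(1) produces a simplex-wise orthogonal decomposition. Fix a simplex $\Delta$ on a chain $p_0 \prec \cdots \prec p_n$ of coverings and partition $\{1,\dots,n\} = I_\wedge \sqcup I_\vee$ according to which operation advances at step $i$. Inspection of the coordinate formula~(\ref{eqn:varphi_delta2}) shows that the $\varphi$-coordinates of $a\wedge x$ in the collapsed chain are exactly the tail-sums of $x$ indexed by $I_\wedge$, and symmetrically for $\vee$. This gives the pointwise Pythagorean identity
\[
d(x,y)^2 = d(a\wedge x, a\wedge y)^2 + d(a\vee x, a\vee y)^2 \qquad (x, y \in \Delta).
\]
Hence the product map $\Phi := (a\wedge\cdot,\, a\vee\cdot): K({\cal P}) \to M \times N$, with $M := K({\cal I}(a))$ and $N := K({\cal F}(a))$ both convex by part~(2), is a simplex-wise isometric embedding. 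For any polygonal path $P$ this gives $d_{M\times N}(\Phi\circ P) = d(P)$, and Lemma~\ref{lem:product} applied to $\Phi\circ P = (a\wedge P,\, a\vee P)$ delivers the inequality~(\ref{eqn:decomposition}). When both components are geodesics, Lemma~\ref{lem:product} gives equality and promotes $\Phi\circ P$ to a geodesic in $M \times N$; a path-infimum argument shows $\Phi$ is also globally nonexpansive, so $d(P(0),P(1)) \geq d_{M\times N}(\Phi(P(0)), \Phi(P(1))) = d(P)$, making $P$ a geodesic in $K({\cal P})$.

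The main obstacle is the combinatorial exclusive-or of part~(1): once the modular rank identity is shown to force \emph{exactly one} of $a\wedge$ and $a\vee$ to advance at each covering pair, everything downstream is a packaging exercise through the two pre-existing lemmas. The one step requiring extra bookkeeping is confirming that the $\varphi$-coordinates of $a\wedge x$ in the collapsed chain coincide with the $I_\wedge$-indexed tail-sums of $\varphi_\Delta(x)$, since this is what promotes the simplex-wise identity to a genuine Pythagorean splitting.
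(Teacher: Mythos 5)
Your proposal is correct and follows essentially the same route as the paper: derive the exclusive-or covering dichotomy from the modular rank identity, feed it into Lemma~\ref{lem:strictly_decrease} for parts~(1) and~(2), and turn it into the simplex-wise Pythagorean splitting $d(x,y)^2 = d(a\wedge x,a\wedge y)^2 + d(a\vee x,a\vee y)^2$ combined with Lemma~\ref{lem:product} for part~(3). The only cosmetic difference is that you phrase the part~(3) argument through the explicit map $\Phi=(a\wedge\cdot,\,a\vee\cdot)$ rather than via the paper's telescoping subdivision estimate, but the content is identical.
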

\begin{proof}
	(1). For a covering pair $(p,p')$, by modularity of $a$, exactly one of the following holds:
	\begin{itemize}
		\item $a \wedge p'$ covers $a \wedge p$ and $a \vee p' = a \vee p$.
		\item $a \wedge p' = a \wedge p$ and $a \vee p'$ covers $a \vee p$.
	\end{itemize}
	This follows from $1 = r[a \vee p, a \vee p'] + r[a \wedge p, a \wedge p']$, 
	which is obtained by subtracting $r(a) + r(p) = r(a \vee p) + r(a \wedge p)$ from 
	$r(a) + r(p') = r(a \vee p') + r(a \wedge p')$. 
	In particular, both $a \wedge$ and $a \vee$ 
	are nonexpansive retractions.
	
	(3). To show (\ref{eqn:decomposition}), it suffices to show $
	d(x,y)^2 = d(a \wedge x, a \wedge y)^2 + d(a \vee x, a \vee y)^2
	$
	for points $x,y$ in a common simplex.
	Indeed, the argument in the proof of Lemma~\ref{lem:product} 
	is applicable in a straightforward way, since
    $d(P(t_{i-1}),P(t_{i}))^2 = d(a \wedge P(t_{i-1}), a \wedge P(t_{i}))^2 + d(a \vee P(t_{i-1}), a \vee P(t_{i}))^2$ holds for a sufficiently fine subdivision $0 = t_0 < t_1 < \cdots < t_N = 1$.

    We can assume that $x = \sum_{i} \lambda_i p_i$ and $y =\sum_{i} \mu_i p_i$ 
    for a maximal chain $p_0 \prec p_1 \prec \cdots \prec p_n$.
    We use the method of the proof of Lemma~\ref{lem:strictly_decrease}~(1).
    Let $I$ be the set of indices $i \in \{1,2,\ldots,n\}$ 
    such that $a \wedge p_i$ covers $a \wedge p_{i-1}$.
    Then, by the argument in (1), $J = \{1,2,\ldots,n\} \setminus I$ is the set of indices 
    $i$ such that $a \vee p_i$ covers $a \vee p_{i-1}$.
    Therefore, by (\ref{eqn:d(phi(x),phi(x')}), we have
    \[
    d(x,y)^2  =  \sum_{i \in I \cup J} (\lambda_i + \cdots + \lambda_n - \mu_i - \cdots - \mu_n)^2 
     =  d(a \wedge x, a \wedge y)^2 + d(a \vee x, a \vee y)^2.  
    \]
   
    Suppose that both $a \vee P$ and $a \wedge P$ are geodesics.
    For $0 \leq t < t' \leq 1$, 
    choose any sufficiently fine subdivision $t= t_0 < t_1 < \cdots < t_m =t'$. 
    Then we have
    \begin{eqnarray*}
    && d(P(t),P(t')) \leq  \sum_{i=1}^m d(P(t_{i-1}),P(t_{i}))  \\
    && =   \sum_{i=1}^m \sqrt{d(a \wedge P(t_{i-1}), a \wedge P(t_i))^2 +d(a \vee P(t_{i-1}), a \vee P(t_i))^2 } \\
    && =  \sum_{i=1}^m (t_{i}- t_{i-1}) \sqrt{d(a \wedge P(0), a \wedge P(1))^2 +d(a \vee P(0), a \vee P(1))^2} \\
    && =  |t' -t| \sqrt{d(a \wedge P(0), a \wedge P(1))^2 +d(a \vee P(0), a \vee P(1))^2} \\
    && =  \sqrt{d(a \wedge P(t), a \wedge P(t'))^2 +d(a \vee P(t), a \vee P(t'))^2} \\
    && \leq  d(P(t),P(t')),	
    \end{eqnarray*}
    where the last inequality follows from the established (\ref{eqn:decomposition}).
   From this, we see that the equality holds in (\ref{eqn:decomposition}) and $P$ is a geodesic.

	(2). Let $u,v \in K({\cal I}(a))$.
	Take a polygonal path $P$ connecting $u$ and $v$.
	Then the image $a \wedge P$ is a (polygonal) path in $K({\cal I}(a))$ 
	connecting $u = a \wedge u$ and $v = a \wedge v$.	
	Suppose that $P$ meets $K({\cal P}) \setminus K({\cal I}(a))$.
	We show $d( a \wedge P) < d(P)$.
	We can take two points $x,y$ in $P$ 
	such that the segment $[x,y]$, which is a part of $P$, belongs to a common simplex, 
	$x \in K({\cal I}(a))$, and $y \not \in K({\cal I}(a))$.
	Suppose that $x = \sum_{i=0}^n \lambda_i p_i$ and $y = \sum_{i=0}^n \lambda'_i p_i$.
	For some index $k$, it necessarily holds that $p_j \preceq a$ $(j \leq k)$, 
	$p_{j} \not \preceq a$ $(j > k)$, $\sum_{j: j> k} \lambda_i = 0$, 
	and $\sum_{j: j> k} \lambda'_i \neq 0$.
	Then $p_k = a \wedge p_k = a \wedge p_{k+1}$ must hold (provided $p_{k+1}$ covers $p_k$).
	By Lemma~\ref{lem:strictly_decrease}~(2), 
	we have $d(a \wedge x, a \wedge y) < d(x,y)$.
	Consequently $d(a \wedge P) < d(P)$.
	Thus every shortest path between $x$ and $y$ belongs to $K({\cal I}(a))$.
	
	For $K({\cal F}(a))$, reverse the partial order of ${\cal P}$ 
	and consider the corresponding orthoscheme complex, 
	which is isometric to the original $K({\cal P})$. 
	Then we obtain the statement for $K({\cal F}(a))$.
\end{proof}
A {\em modular lattice} is a graded poset (lattice) such that every element 
is a modular element; this may be an unusual definition of a modular lattice 
but is equivalent to the standard one; see~\cite[Sections 50--52]{Birkhoff}.
In a modular lattice, 
we can use the above lemma freely. 
Also the above proof is applied to show the following:
\begin{Lem}\label{lem:decomposition2}
Let ${\cal L}$ be a semilattice such that every principle ideal is a modular lattice.
For $a \in {\cal L}$, the map $a \wedge$ is an order-preserving nonexpansive retraction to ${\cal I}(a)$, 
and $K({\cal I}(a))$ is a strictly-convex subspace of $K({\cal L})$.
\end{Lem}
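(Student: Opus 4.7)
The plan is to mirror the proof of Lemma~\ref{lem:decomposition}, restricted to the meet operation $a \wedge$, while substituting the missing modularity of $a$ in $\cal L$ (which would involve joins with $a$ that need not exist) by the modularity of $a \wedge p'$ inside the principal ideal ${\cal I}(p')$, which by hypothesis is a modular lattice.

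First I would check that $p \mapsto a \wedge p$ is an order-preserving nonexpansive map from $\cal L$ to ${\cal I}(a)$. Fix a covering pair $(p,p')$ of $\cal L$; the elements $a \wedge p$, $a \wedge p'$, and $p$ all lie in ${\cal I}(p')$, which is a modular lattice in which every element is modular. Inside ${\cal I}(p')$ form the join $q := (a \wedge p') \vee p$. Since $p \preceq q \preceq p'$ and $p'$ covers $p$, either $q = p$ --- in which case $a \wedge p' \preceq p$ forces $a \wedge p' = a \wedge p$ --- or $q = p'$, and then the modular rank identity applied to $a \wedge p'$ in ${\cal I}(p')$ gives
\begin{equation*}
r(a \wedge p') + r(p) = r(p') + r((a \wedge p') \wedge p) = r(p) + 1 + r(a \wedge p),
\end{equation*}
so $a \wedge p'$ covers $a \wedge p$. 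Hence $a \wedge $ is a nonexpansive order-preserving map and is clearly the identity on ${\cal I}(a)$, so it is a retraction; Lemma~\ref{lem:strictly_decrease}~(1) then extends it to a nonexpansive retraction $K({\cal L}) \to K({\cal I}(a))$.

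For strict convexity I would repeat the argument given for $K({\cal I}(a))$ in the proof of Lemma~\ref{lem:decomposition}~(2), which uses only the map $a \wedge $. Given $u, v \in K({\cal I}(a))$ and a polygonal path $P$ from $u$ to $v$ that leaves $K({\cal I}(a))$, I choose consecutive breakpoints $x, y$ on $P$ lying in a common simplex with $x \in K({\cal I}(a))$ and $y \notin K({\cal I}(a))$. Writing $x = \sum_{i=0}^n \lambda_i p_i$ and $y = \sum_{i=0}^n \lambda'_i p_i$ along a maximal chain $p_0 \prec p_1 \prec \cdots \prec p_n$, there is an index $k$ with $p_j \preceq a$ for $j \leq k$ and $p_j \not\preceq a$ for $j > k$, so $\lambda_j = 0$ for $j > k$ while $\sum_{j > k}\lambda'_j > 0$. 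The inclusions $p_k \preceq a \wedge p_{k+1} \prec p_{k+1}$, combined with the covering $p_k \prec p_{k+1}$, force $a \wedge p_{k+1} = p_k = a \wedge p_k$; Lemma~\ref{lem:strictly_decrease}~(2) applied at index $k+1$ then yields $d(a \wedge x, a \wedge y) < d(x,y)$, and therefore $d(a \wedge P) < d(P)$, proving strict convexity of $K({\cal I}(a))$.

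The only substantive step is the first one, where the missing global modularity of $a$ has to be replaced by the local modularity of $a \wedge p'$ inside ${\cal I}(p')$. Once the auxiliary join $(a \wedge p') \vee p$ has been shown to exist there and the covering dichotomy has been extracted from the modular rank identity, the remaining strictly-nonexpansive argument transports verbatim from Lemma~\ref{lem:decomposition}~(2), because that part of the earlier proof did not use the dual map $a \vee $ at all.
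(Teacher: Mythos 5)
Your proposal is correct and takes essentially the same route the paper intends: it adapts the proof of Lemma~\ref{lem:decomposition}, with the one detail the paper leaves implicit being exactly the step you supply, namely replacing the global modularity of $a$ by the modular rank identity for $a \wedge p'$ inside the modular lattice ${\cal I}(p')$ to obtain the covering dichotomy. The strict-convexity half is indeed a verbatim transport of the $K({\cal I}(a))$ part of Lemma~\ref{lem:decomposition}~(2), which never uses $a\vee$.
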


\subsection{Median orthoscheme complex}\label{subsec:median}
A {\em median semilattice} is a semilattice ${\cal D}$ 
such that every principal ideal of ${\cal D}$
is a distributive lattice and ${\cal D}$ 
satisfies the lattice-theoretic flag condition (LFL).
By the definition, a median semilattice is 
a common generalization of 
a distributive lattice and Boolean semilattice.
The former is represented by the family of ideal 
in a poset (Example~\ref{ex:distributivelattice}) 
and the latter is the family of all stable sets of a graph (Example~\ref{ex:Booleansemilattice}).
A median semilattice admits a common generalization of these representations, from which an explicit description of its orthoscheme complex is given.

A {\em PIP (Poset with Inconsistent Pairs)} $G_{\preceq} = (V,E,\preceq)$ is a pair of an undirected 
graph $G = (V,E)$ and  
a partial order relation $\preceq$ on vertex set $V$ 
such that $uv \in E$ and $u \preceq u'$ imply $u'v \in E$.
This concept appeared in \cite{barthlemy1993median}; 
the name PIP is due to \cite{ArdilaOwenSullivant2012}.
A {\em stable ideal} (or {\em consistent ideal}) is a vertex subset 
such that it is a stable set relative to the graph $G$ 
and an ideal relative to the poset $(V,\preceq)$.
Let ${\cal S}(G_{\preceq}) \subseteq 2^V$ be the poset of all stable ideals 
ordered by inclusion. 
Notice that ${\cal S}(G_{\preceq})$ is not an abstract simplicial complex.
\begin{Prop}[{\cite{barthlemy1993median}}]
	For a PIP $G_{\preceq}$, the family ${\cal S}(G_{\preceq})$ of stable ideals is 
	a median semilattice with $\wedge = \cap$.
	Conversely, every median semilattice ${\cal D}$ is isomorphic to 
	${\cal S}(G_{\preceq})$ for some PIP $G_{\preceq}$.
\end{Prop}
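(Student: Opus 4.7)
The plan is to establish the two directions of this equivalence separately.

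For the forward direction, given a PIP $G_{\preceq}$, I would first observe that the intersection of two stable ideals is again a stable ideal, so ${\cal S}(G_{\preceq})$ is a meet-semilattice with $\wedge = \cap$, graded by cardinality. To check that each principal ideal ${\cal I}(S_0) = \{S \in {\cal S}(G_{\preceq}) : S \subseteq S_0\}$ is distributive, I would show it is closed under both $\cap$ and $\cup$: the union of two ideals contained in $S_0$ is still an ideal inside $S_0$, and it is stable since $S_0$ itself is. Hence ${\cal I}(S_0)$ is a sublattice of $2^{S_0}$ and inherits distributivity.

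For the flag condition (LFL), the key observation is that the join $S_1 \vee S_2$ of two stable ideals exists iff $S_1 \cup S_2$ is itself a stable ideal, in which case this union equals the join (any common upper bound must contain it). Consequently $u \vee v \vee w$ exists iff $u \cup v \cup w$ is a stable ideal. The nontrivial direction of LFL then reduces to: if each of $u \cup v$, $v \cup w$, $u \cup w$ is a stable ideal, then so is $u \cup v \cup w$. The ideal property of the triple union is automatic, and stability follows because any edge in $u \cup v \cup w$ has its two endpoints covered by some pairwise union (by pigeonhole among three sets), contradicting stability of that pair.

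For the converse, I would use a Birkhoff-style representation. Set $V := J({\cal D})$, the join-irreducibles of ${\cal D}$ (nonzero elements covering exactly one element), restrict the partial order from ${\cal D}$, and declare $jk \in E$ iff $j, k$ have no common upper bound in ${\cal D}$. The PIP axiom holds because a common upper bound of $j'$ and $k$ with $j \preceq j'$ would also bound $j$. Define $\phi: {\cal D} \to {\cal S}(G_{\preceq})$ by $\phi(d) := \{j \in V : j \preceq d\}$; the image is an ideal in $V$ and is stable since any two join-irreducibles $\preceq d$ are bounded by $d$. Injectivity of $\phi$ reduces to $d = \bigvee \phi(d)$, which follows from Birkhoff applied to the distributive principal ideal ${\cal I}(d)$, whose join-irreducibles coincide with $\phi(d)$ because the covering relations are inherited.

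The main obstacle is surjectivity of $\phi$: given a stable ideal $S = \{j_1, \ldots, j_n\}$, I must produce $d \in {\cal D}$ with $\phi(d) = S$, the natural candidate being $d := \bigvee S$. Existence of this join is proved by induction on $n$: the cases $n \leq 2$ are trivial (pairwise stability gives the join), and $n = 3$ is exactly LFL. For the inductive step with $n+1$ elements, set $u := \bigvee\{j_1, \ldots, j_{n-1}\}$, $v := j_n$, $w := j_{n+1}$; the joins $u \vee v$ and $u \vee w$ exist by the inductive hypothesis applied to two $n$-element subsets of $S$, while $v \vee w$ exists by pairwise stability, so LFL yields $u \vee v \vee w = \bigvee S$. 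Once $d = \bigvee S$ is in hand, $\phi(d) = S$ follows because Birkhoff inside ${\cal I}(d)$ identifies the join-irreducibles $\preceq d$ with the ideal in $V$ generated by $S$, which equals $S$ itself since $S$ is already an ideal.
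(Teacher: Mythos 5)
Your proof is correct and uses exactly the construction the paper records (vertex set $V$ = join-irreducibles of ${\cal D}$ with the induced order, an edge for each unbounded pair, and the isomorphism $p \mapsto V \cap {\cal I}(p)$); the paper itself cites \cite{barthlemy1993median} for the proof rather than spelling it out, so you are filling in the argument along the intended lines. One small imprecision: in the induction establishing that $\bigvee S$ exists, the inductive hypothesis must range over stable \emph{sets} rather than stable ideals, since $\{j_1,\ldots,j_{n-1}\}$ need not be an ideal even when $S$ is; this is harmless because stability is hereditary under taking subsets, so the induction goes through unchanged, and it is also worth noting explicitly that a finite bounded subset of ${\cal D}$ has a join (compute it inside ${\cal I}(m)$ for a common upper bound $m$, then meet any other upper bound with $m$ to see it dominates), which is used implicitly when you write $d = \bigvee \phi(d)$.
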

The construction of such a PIP $G_{\preceq}$ is as follows.
The vertex set $V$ of $G_{\preceq}$ is the set of all join-irreducible elements of ${\cal D}$, where
a {\em join-irreducible} element 
is an element that is not $0$ and cannot be represented as the join of other elements.
The partial order $\preceq$ on $V$ is the restriction of the partial order of ${\cal D}$.
A pair $(u,v)$ of vertices 
has an edge in $G =(V,E)$ if and only if the join of $u,v$ does not exist. 
The resulting $G_{\preceq} = (V,E,\preceq)$ is actually a PIP, and
${\cal D}$ is isomorphic to ${\cal S}(G_{\preceq})$, 
where an isomorphism is given by $p \mapsto V \cap {\cal I}(p)$.
In particular, median semilattice ${\cal D}$ is embedded to Boolean semilattice ${\cal S}(G)$.
This Boolean semilattice ${\cal S}(G)$ is called 
the {\em Boolean extension} of ${\cal D}$, and is also denoted by $\overline{\cal D}$.

Consider the orthoscheme complex $K({\cal D})$ 
of a median semilattice ${\cal D}$, 
which is called a {\em median orthoscheme complex}.
The next proposition shows that 
that $K({\cal D})$ is realized as a CAT(0) subspace in CAT(0) 
rooted cubical complex $K(\overline{\cal D}) = K(G)$.
\begin{Prop}[{\cite[Sectioin 7.6]{CCHO14}}]\label{prop:median_semilattice}
	Let ${\cal D}$ be a median semilattice.
	\begin{itemize}
		\item[{\rm (1)}] The median orthoscheme complex $K({\cal D})$ is CAT(0).
		\item[{\rm (2)}] Suppose that ${\cal D}$ is represented by PIP 
		$G_{\preceq} = (V,E, \preceq)$. Then $K({\cal D})$
		is isometric to the subspace $K(G_{\preceq})$ of $K(G)$:
		\begin{equation*}
		K(G_{\preceq}) := \{ x \in K(G) \mid x_v \geq x_{v'}\ (v,v' \in V: v \preceq v') \},
		\end{equation*}
		where the isometry is given by 
		\begin{equation}\label{eqn:b-coordinate}
		x = \sum_{i}\lambda_i p_i \mapsto \sum_{v \in V} (\sum_{i: v \preceq p_i} \lambda_i) v.
		\end{equation}		
	\end{itemize}
\end{Prop}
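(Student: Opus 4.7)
The plan is to prove part~(2) first, then derive part~(1) from it. For~(2), I would construct the map~(\ref{eqn:b-coordinate}) as the extension of the natural order-preserving injection $\iota: {\cal D} \to \overline{\cal D} = {\cal S}(G_\preceq)$, $p \mapsto V \cap {\cal I}(p)$, to orthoscheme complexes, and then identify $K(\overline{\cal D})$ with the rooted cubical complex $K(G)$ via Example~\ref{ex:Booleansemilattice}. The key fact is that $\iota$ is rank-preserving: since ${\cal I}(p)$ is a distributive lattice (by the median semilattice assumption), the rank of $p$ equals the number of join-irreducibles it contains, namely $|\iota(p)|$. Rank preservation guarantees that the extension acts as an isometry on each orthoscheme simplex of $K({\cal D})$, so by gluing, the composed map $\phi$ is an isometric embedding into $K(G)$.

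To identify the image of $\phi$ with $K(G_\preceq)$, I would verify both inclusions. Monotonicity of $\phi(x)_v$ (decreasing in $v$ with respect to $\preceq$) is direct, since $v \preceq v'$ together with $v' \preceq p_i$ imply $v \preceq p_i$, so $\phi(x)_v = \sum_{i: v \preceq p_i}\lambda_i \geq \sum_{i: v' \preceq p_i}\lambda_i = \phi(x)_{v'}$. For surjectivity, given $y \in K(G_\preceq)$ with distinct positive values $s_1 > s_2 > \cdots > s_k$, the level sets $T_j := \{v : y_v \geq s_j\}$ are subsets of $\supp y$ (hence stable in $G$) and are ideals in $(V,\preceq)$ by the monotonicity of $y$; thus each $T_j$ is an element of ${\cal D}$, and together with $T_0 := \emptyset$ they form a chain in ${\cal D}$ whose associated simplex contains an explicit preimage of $y$.

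For part~(1), by the isometry from~(2) it suffices to show $K(G_\preceq)$ is a strictly convex subspace of $K(G)$: the ambient $K(G)$ is CAT(0) by Theorem~\ref{thm:Gromov} since the stable-set complex of any graph is flag, and then Lemma~\ref{lem:CAT(0)=uniquegeodesic} will give the CAT(0) property of $K({\cal D})$. Given $x, y \in K(G_\preceq)$, Owen's formula (Theorem~\ref{thm:owen}) tells us every geodesic in $K(G)$ between them lies in the path space of some $(x,y)$-concave arch $(B = U_0, U_1, \ldots, U_m = C)$ of stable sets. A direct consequence of the PIP axiom ($uv \in E$ and $u \preceq u'$ imply $u'v \in E$) is that the ideal closure of any stable set remains stable. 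I would use this to argue that an optimal arch can be chosen with each $U_i$ a stable ideal, by replacing any non-ideal $U_i$ with its ideal closure and appealing to a strict-decrease argument in the spirit of Lemma~\ref{lem:strictly_decrease}. Once each $U_i$ is a stable ideal, Owen's linear interpolation~(\ref{eqn:formula}) within each cube $[0,1]^{U_i}$ immediately preserves the PIP monotonicity. The main obstacle lies in this reduction step — ensuring that an optimal arch consists of stable ideals — which blends the PIP property with the extremality characterization of concave arches discussed after~(\ref{eqn:concave}) and must rule out every optimal arch that strays outside $K(G_\preceq)$; after this reduction, the monotonicity check for Owen's interpolation is a routine calculation.
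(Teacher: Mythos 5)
The paper does not prove Proposition~\ref{prop:median_semilattice} at all---it is cited verbatim from~\cite[Section 7.6]{CCHO14}---so there is no ``paper's own proof'' to compare against. Evaluated on its own terms, your argument is sound in outline. Part~(2) is essentially complete: $\iota(p) = V \cap {\cal I}(p)$ is indeed rank-preserving because in a distributive lattice the rank of the top element equals the number of join-irreducibles, and your surjectivity argument via level sets is correct (the level sets are ideals by monotonicity, stable because they sit inside the stable set $\supp y$, and the associated chain gives the preimage). For part~(1), your overall strategy---show that $K(G_\preceq)$ is a convex subspace of the Gromov-CAT(0) complex $K(G)$ and inherit CAT(0)---is a legitimate route, and it is in fact the spirit of the paper's later Proposition~\ref{prop:formula_for_mediansemilattice} and the remark following it. Your observation that the PIP axiom forces the ideal closure of any stable set to remain stable is correct and is the crucial structural ingredient.

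Two points deserve sharpening. First, your ``replace $U_i$ by its ideal closure and appeal to a strict-decrease argument in the spirit of Lemma~\ref{lem:strictly_decrease}'' is imprecise: such a replacement can destroy the arch condition (\ref{eqn:arch}), since inflating $U_i$ may violate $B \cap U_{i-1} \supset B \cap U_i$, and Lemma~\ref{lem:strictly_decrease} concerns nonexpansive poset maps, not arch weights. The cleaner argument is the dominance one: work in the reduced bipartite case with $\supp x = B$, $\supp y = C$, $V = B\cup C$, and observe that if $U$ is a stable set that is not an ideal then its ideal closure $\hat U$ is a strictly larger stable set, so $\xi(\hat U)$ dominates $\xi(U)$ componentwise with strict inequality in at least one coordinate (because every coordinate of $x$ on $B$ and of $y$ on $C$ is nonzero). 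Together with $(0,0) \in Q$ this shows $\xi(U)$ lies in the interior of a segment contained in $Q$, hence is not an extreme point, hence $U \notin {\cal A}^*$. This directly forces every member of the optimal arch to be a stable ideal without any replacement step. Second, you invoke Theorem~\ref{thm:owen}(2) as if it said ``every geodesic in $K(G)$ lies in the path space of a concave arch,'' but it only treats \emph{path-space} geodesics; the reduction of arbitrary geodesics to path-space geodesics is (R3), and picking out the unique minimizing arch ${\cal A}^*$ is (R4), both of which you need to cite (they are sketched in Section~\ref{subsec:cubical}) before the extremality argument can bite. You also need (R1), (R2) to pass from general $x,y \in K(G_\preceq)$ to the bipartite case, and you should verify that $B \cup C$ is an ideal (it is, since $\supp x$ and $\supp y$ are ideals by the monotonicity constraint) so that the restriction map stays inside $K(G_\preceq)$. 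The final ``routine monotonicity check'' is precisely what the paper carries out in the proof of Proposition~\ref{prop:formula_for_mediansemilattice}(1), so it is reasonable to treat it as known.
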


Note that one can also associate PIP $G_{\preceq}$ 
with CAT(0) cubical complex $C(G_{\preceq})$, as in~\cite{ArdilaOwenSullivant2012}, 
which is different from $K(G_{\preceq})$.

The image of $x \in K({\cal D})$ by the isometry in (\ref{eqn:b-coordinate}) is called 
the {\em b-coordinate} of $x$, 
where ``b'' stands for Birkhoff. 
We write $x =_{\rm b} \sum_{v \in V}x_v v$ if the image of $x$ 
by the isometry (\ref{eqn:b-coordinate}) 
is $\sum_{v \in V} x_v v$.
Observe from (\ref{eqn:b-coordinate}) that
the meet and join maps work as projections as follows:
\begin{Lem}\label{lem:b-coordinate}
	Let $x =_{\rm b} \sum_{v \in V} x_v v \in K({\cal D})$. For $a \in {\cal D}$ with 
	$A = \{ v \in V \mid v \preceq a\}$, it holds 
	$x \wedge a =_{\rm b} \sum_{v \in A} x_v v = x|_{A}$ and 
	$x \vee a =_{\rm b} \sum_{v \in A} v + x|_{V \setminus A}$. 
\end{Lem}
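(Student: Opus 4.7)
The plan is to compute the b-coordinates directly from the formula (\ref{eqn:b-coordinate}). Write $x = \sum_{i=0}^{n} \lambda_i p_i$ on a chain $p_0 \prec p_1 \prec \cdots \prec p_n$ of ${\cal D}$, so that the b-coordinate at $v \in V$ is $x_v = \sum_{i:\ v \preceq p_i} \lambda_i$. Both $a \wedge (\cdot)$ and (where it makes sense) $a \vee (\cdot)$ are order-preserving, hence carry the chain $(p_i)$ to the chains $(p_i \wedge a)$ and $(p_i \vee a)$; by linear extension, $x \wedge a = \sum_i \lambda_i (p_i \wedge a)$ and $x \vee a = \sum_i \lambda_i (p_i \vee a)$ are well-defined points of $K({\cal D})$, the latter under the assumption that $p_n$ (hence every $p_i$) has a common upper bound with $a$.

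For the meet, for each $v \in V$ one has $v \preceq p_i \wedge a$ iff $v \preceq p_i$ and $v \preceq a$, i.e.\ iff $v \preceq p_i$ and $v \in A$. Summing, the b-coordinate of $x \wedge a$ at $v$ equals $\sum_{i:\ v \preceq p_i} \lambda_i = x_v$ when $v \in A$ and $0$ otherwise, which is $x|_{A}$ as claimed. For the join, I would work inside the principal ideal ${\cal I}(p_n \vee a)$, which is a distributive lattice by the defining property of median semilattice. Within this sublattice the join-irreducibles coincide with the join-irreducibles of ${\cal D}$ that lie below $p_n \vee a$, and Birkhoff's representation (Example~\ref{ex:distributivelattice}) gives that the join-irreducibles below $p \vee q$ are precisely the union of those below $p$ and those below $q$. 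Applied to $p_i$ and $a$, this gives $v \preceq p_i \vee a$ iff $v \preceq p_i$ or $v \in A$. For $v \in A$ the sum becomes $\sum_i \lambda_i = 1$, while for $v \notin A$ it collapses to $\sum_{i:\ v \preceq p_i} \lambda_i = x_v$, yielding exactly $\sum_{v \in A} v + x|_{V \setminus A}$.

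The only substantive point to be careful about is the join case: one must first ensure that $x \vee a$ is defined (i.e.\ that $p_n$ and $a$ are bounded, so that every $p_i \vee a$ lives in the single distributive ideal ${\cal I}(p_n \vee a)$), and then justify the key identity ``join-irreducibles below $p_i \vee a$ are those below $p_i$ or below $a$,'' which is precisely where the distributivity of principal ideals of ${\cal D}$ is used. The meet case is entirely formal since meets always exist in a semilattice. Beyond this, the whole argument is a bookkeeping computation with the b-coordinate, and no further work is required.
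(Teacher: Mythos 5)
Your proof is correct and is the natural elaboration of what the paper leaves implicit: the lemma appears with only the remark ``Observe from (\ref{eqn:b-coordinate}) that\ldots'' and no written proof. The direct computation of b-coordinates you carry out---trivial for the meet, and for the join using distributivity of the principal ideal ${\cal I}(\tau(x)\vee a)$ to obtain $v\preceq p_i\vee a$ if and only if $v\preceq p_i$ or $v\in A$---is exactly the intended argument, and you are right to flag that the join case presupposes $\tau(x)$ and $a$ are bounded.
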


Next we discuss geodesics in $K({\cal D})$, and show that Owen's formula is naturally extended. 
A {\em bipartite} PIP is a PIP $G_{\preceq}$ 
such that $G$ is a bipartite graph with color classes $B,C$ and has no isolated vertices,  
and any $b \in B$ and $c \in C$ are incomparable in $\preceq$.
Suppose that $G_{\preceq}$ is a bipartite PIP with color classes $B,C$.
An {\em arch} is a sequence $(B = U_0,U_1,\ldots,U_m = C)$ 
of stable ideals satisfying (\ref{eqn:arch}).
The path space $K({\cal A})$ is defined as $K(G_{\preceq}) \cap (\bigcup_i [0,1]^{U_i})$.

Let $x,y \in K(G_{\preceq})$ with $\supp x=B$ and $\supp y =C$.
For an arch ${\cal A} = (B = U_0,U_1,\ldots,U_m = C)$, 
$X_i, Y_i, \|X_i\|, \|Y_i\|$ are defined 
by (\ref{eqn:X_i}) (\ref{eqn:Y_i}), (\ref{eqn:|X_i|}), and (\ref{eqn:|Y_i|}).
Also $v({\cal A};x,y)$ is defined by (\ref{eqn:v(A;x,y)}).
An $(x,y)$-concave arch is an arch satisfying (\ref{eqn:concave}). 
In this setting, precisely the same statement of Theorem~\ref{thm:owen} holds.
\begin{Prop}\label{prop:formula_for_mediansemilattice}
	\begin{itemize}
		\item[{\rm (1)}]
	For an $(x,y)$-concave arch ${\cal A} = (B = U_0,U_1,\ldots,U_m =C)$, 
	the unique geodesic connecting $x,y$ in $K({\cal A})$ 
	is given by (\ref{eqn:formula}), and its length is equal to $v({\cal A};x,y)$.
\item[{\rm (2)}]
	Moreover, any path-space geodesic belongs 
	to the path space for some $(x,y)$-concave arch. 
\end{itemize} 
\end{Prop}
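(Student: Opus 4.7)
The plan is to lift Owen's formula from the ambient rooted cubical complex $K(G) = K(\overline{\cal D})$ to its median subspace $K(G_{\preceq}) = K({\cal D})$, using Theorem~\ref{thm:owen} as a black box and verifying only that the candidate geodesic given by formula (\ref{eqn:formula}) satisfies the b-coordinate monotonicity constraints that cut $K(G_{\preceq})$ out of $K(G)$. To this end, for any arch ${\cal A} = (B = U_0, U_1, \ldots, U_m = C)$ of stable ideals, write $K_G({\cal A}) := \bigcup_i [0,1]^{U_i}$ for the corresponding cubical path space; since stable ideals are stable sets, ${\cal A}$ is an arch of $G$, and by definition $K({\cal A}) = K(G_{\preceq}) \cap K_G({\cal A})$. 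For part~(1) with ${\cal A}$ concave, Theorem~\ref{thm:owen}(1) provides a unique $K_G({\cal A})$-geodesic $P$ between $x$ and $y$, described by formula (\ref{eqn:formula}) and of length $v({\cal A}; x, y)$; it suffices to prove $P \subseteq K(G_{\preceq})$.

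The crux is precisely this b-coordinate verification. Because $B$ and $C$ are incomparable in a bipartite PIP, only pairs $v \preceq v'$ inside a single color class require checking. Take $b \preceq b'$ in $B$. Since each $U_i$ is an ideal, $b' \in U_i$ forces $b \in U_i$; hence if $b' \in X_{i'}$ then $b \in X_i$ for some $i \geq i'$. Writing $\tau_j := \|X_j\|/(\|X_j\| + \|Y_j\|)$, the concavity condition (\ref{eqn:concave}) gives $\tau_{i'} \leq \tau_i$, and together with $x_b \geq x_{b'}$ (b-coordinate monotonicity of $x$) a piecewise comparison of formula (\ref{eqn:formula}) on $[0, \tau_{i'}]$, $[\tau_{i'}, \tau_i]$, and $[\tau_i, 1]$ yields $x_b(t) \geq x_{b'}(t)$ for every $t$. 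A symmetric computation for $c \preceq c'$ in $C$ (now $c \in Y_i$, $c' \in Y_{i'}$ with $i \leq i'$) uses the monotonicity of $\tau \mapsto (t - \tau)/(1 - \tau)$ on $[0, t]$.

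With $P \subseteq K({\cal A})$ secured, the nesting $K({\cal A}) \subseteq K_G({\cal A})$ together with $d(P) = v({\cal A}; x, y) = d_{K_G({\cal A})}(x, y)$ forces $d_{K({\cal A})}(x, y) = v({\cal A}; x, y)$, so $P$ is a $K({\cal A})$-geodesic; any further $K({\cal A})$-geodesic is simultaneously a shortest path in $K_G({\cal A})$ and hence coincides with $P$ by Owen's uniqueness, completing~(1). For~(2), let $P'$ be a path-space geodesic in $K({\cal A})$ for some arch ${\cal A}$. Applying part~(1) to the concave sub-arch furnished by Theorem~\ref{thm:owen}(2) shows that the $K_G({\cal A})$-geodesic already lies in $K({\cal A})$, so $d_{K({\cal A})}(x, y) = d_{K_G({\cal A})}(x, y)$; hence $P'$ is itself a shortest path in $K_G({\cal A})$, and Theorem~\ref{thm:owen}(2) places $P'$ inside $K_G({\cal A}')$ for some concave sub-arch ${\cal A}'$. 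Intersecting with $K(G_{\preceq})$ gives $P' \subseteq K({\cal A}')$.

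The hard part is entirely the b-coordinate verification in the second paragraph: this is the only place where the \emph{ideal} (as opposed to merely stable) nature of the $U_i$ combines with the concavity hypothesis to pin down the cubical geodesic inside $K(G_{\preceq})$. Everything else is a formal transfer from the cubical setting using only Theorem~\ref{thm:owen} and the isometric embedding of Proposition~\ref{prop:median_semilattice}(2).
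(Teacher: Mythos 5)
Your proof is correct and follows essentially the same route as the paper: reduce $K({\cal A}) = K(G_{\preceq}) \cap K_G({\cal A})$ to the cubical case, verify that the formula~(\ref{eqn:formula}) respects the ideal constraints $x_v(t) \geq x_{v'}(t)$ for $v \preceq v'$ within each color class, and for~(2) transfer the concave sub-arch from Owen's cubical argument back into $K(G_{\preceq})$. You are somewhat more explicit than the paper --- spelling out the piecewise comparison on $[0,\tau_{i'}]$, $[\tau_{i'},\tau_i]$, $[\tau_i,1]$, and carefully arguing that $d_{K({\cal A})}(x,y)=d_{K_G({\cal A})}(x,y)$ so that a $K({\cal A})$-geodesic is a $K_G({\cal A})$-geodesic --- but these are details the paper's terser proof implicitly assumes rather than a genuinely different argument.
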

\begin{proof}
	(1). $K(G_{\preceq})$ is a subspace of the cubical complex $K(G)$.
	An arch ${\cal A}$ for $G_{\preceq}$ is an arch for $G$.
	The path space $K({\cal A})$ for $K(G_{\preceq})$ is a subspace of the (cubical) path space for $K(G)$.
	Therefore it suffices to show that the path $P$ defined by (\ref{eqn:formula}) 
	is actually a path in $K({\cal A})$, i.e.,  
	that for all $t \in [0,1]$, it holds $x_v(t) \geq x_{v'}(t)$ if $v \preceq v'$. 	
    Consider $v,v' \in V$ with $v \preceq v'$.
    Then $v,v' \in B$ or $v,v' \in C$. Suppose that $v,v' \in B$. 
    Since $x \in K(G_{\preceq})$ it holds $x_v(0) = x_v \geq x_{v'} = x_{v'}(0)$.
    Any stable ideal containing $v'$ must contain $v$. 
	Consequently, if $v \in X_{i}$ and $v' \in X_{i'}$, then $i' \leq i$ 
	and hence $\|X_{i'}\|/(\|X_{i'}\| + \|Y_{i'}\|) \leq \|X_{i}\|/(\|X_{i}\| + \|Y_{i}\|)$.
	Thus $x_v(t) =  (1- t(\|X_{i}\| + \|Y_{i}\|)/\|X_{i}\|)x_v \geq  (1- t(\|X_{i'}\| + \|Y_{i'}\|)/\|X_{i'}\|) x_{v'} = x_{v'}(t)$, as required.
	The case of $v,v' \in C$ is shown in a similar way. 
	
	(2). If ${\cal A}$ is nonconcave, 
	then it is also nonconcave for $K(G)$, 
	and the path-space geodesic $P$ for the (cubical) path space in $K(G)$
	belongs to the (cubical) path space for some concave subarch ${\cal A}'$; see Appendix.
	Since this arch ${\cal A}'$ is also a concave arch for $K(G_{\preceq})$, 
	by (1), $P$ belongs to $K({\cal A}')$.
\end{proof}

The unique geodesic property for $K({\cal D})$ can also be established by proving 
the analogues of (R1-R4) in Section~\ref{subsec:cubical}.
In (R1), the projection $x \mapsto x|_{B \cup C}$ is 
also a well-defined strictly-nonexpansive retraction.
Indeed, if $\supp z$ is a stable ideal, 
then so is $\supp z|_{B \cup C} = (B \cup C) \cap \supp z$ (since $B \cup C$ is an ideal). 
Hence geodesics belongs to the strictly-convex subspace
corresponding to the PIP obtained by restricting $G_{\preceq}$ to $B \cup C$.
This PIP is a {\em semi-bipartite} PIP 
(with tri-partition $\{B \setminus Z, C \setminus Z, Z (\supseteq B \cap C)\}$) in the following sense.
A PIP $G_{\preceq} = (V,E, \preceq)$ is called semi-bipartite if it admits 
a tri-partition $\{ B', C', Z\}$ of $V$
such that the restriction $G'_{\preceq}$ of $G_{\preceq}$ to $B'\cup C'$ 
is a bipartite PIP with color classes $B',C'$, 
$Z$ is the set of isolated vertices of $G$, and
there are no $p \in B' \cup C'$ and $q \in Z$ with $p \preceq q$. 
Let $G_{\preceq}^0$ denote the restriction of $G_{\preceq}$ to $Z$, 
which has no edge and is merely a poset.
Then $K(G_{\preceq}) \subseteq K(G'_{\preceq}) \times K(G^0_{\preceq})$.
In contrast to the cubical case, 
the strict inclusion possibly holds. 
Fortunately the unique geodesic can be obtained as the product of 
those for $K(G'_{\preceq})$ and $K(G^0_{\preceq})$.
\begin{Lem}\label{lem:product_median}
	Let $G_{\preceq}$ be a semi-bipartite PIP with tri-partition $B',C',Z$.
	Let $x,y \in K(G_{\preceq})$ with $B' \subseteq \supp x \subseteq B' \cup Z$ and 
	$C' \subseteq \supp y \subseteq C' \cup Z$.
	Let $Q$ be a shortest path-space geodesic in $K(G'_{\preceq})$ 
	connecting $x|_{B' \cup C'}$ and $y|_{B' \cup C'}$, 
	and let $R$ be the geodesic in $K(G^0_{\preceq})$ connecting
	$x|_{Z}$ and $y|_{Z}$.
	Then the product $P = (Q,R)$ belongs to $K(G_{\preceq})$.
\end{Lem}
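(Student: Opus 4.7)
The plan is to verify, in b-coordinates (Lemma~\ref{lem:b-coordinate}), that $P(t)$ lies in $K(G_{\preceq})$ for every $t \in [0,1]$. Writing $Q(t) =_{\rm b} \sum_{v \in B' \cup C'} Q_v(t)\, v$ and $R(t) =_{\rm b} \sum_{v \in Z} R_v(t)\, v$, the product decomposes as $P(t) =_{\rm b} \sum_{v \in V} P_v(t)\, v$ with $P_v = Q_v$ on $B' \cup C'$ and $P_v = R_v$ on $Z$. Membership in $K(G_{\preceq})$ amounts to two conditions: $\supp P(t)$ is a stable set of $G$, and $P_{u'}(t) \geq P_u(t)$ whenever $u' \preceq u$ in $V$.

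Stability is immediate, since every edge of $G$ sits inside $B' \cup C'$ because $Z$ consists of isolated vertices, while $\supp Q(t)$ is already stable in $G'$. For the ideal condition, I would split $u' \preceq u$ into four cases according to which of $B' \cup C'$ and $Z$ contains each endpoint. The cases where both are in $B' \cup C'$ or both are in $Z$ follow at once from $Q(t) \in K(G'_{\preceq})$ and $R(t) \in K(G^0_{\preceq})$, respectively. The case $u \in Z$, $u' \in B' \cup C'$ is ruled out by the semi-bipartite hypothesis that no $p \in B' \cup C'$ satisfies $p \preceq q$ for any $q \in Z$.

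The remaining case, $u \in B' \cup C'$ with $u' \in Z$, will be the main obstacle. Note first that $K(G^0_{\preceq})$ is an order polytope, since $G^0$ has no edges (Example~\ref{ex:distributivelattice}), so $R$ is the affine segment $R_{u'}(t) = (1-t)\, x_{u'} + t\, y_{u'}$. By Proposition~\ref{prop:formula_for_mediansemilattice}, $Q$ follows Owen's formula on some $(x|_{B' \cup C'}, y|_{B' \cup C'})$-concave arch, which makes $Q_u(t)$ piecewise affine with a single breakpoint $t^*$, vanishing on one side of $t^*$ and decaying linearly to zero on the other. The endpoint inequalities $x_{u'} \geq x_u$ and $y_{u'} \geq y_u$, which hold because $x, y \in K(G_{\preceq})$, give $R_{u'}(0) - Q_u(0) \geq 0$ and $R_{u'}(1) - Q_u(1) \geq 0$; at $t = t^*$ one has $Q_u(t^*) = 0 \leq R_{u'}(t^*)$. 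Since $R_{u'} - Q_u$ is affine on each of $[0, t^*]$ and $[t^*, 1]$ and nonnegative at the three points $0, t^*, 1$, it is nonnegative on all of $[0,1]$. This yields $P_{u'}(t) \geq P_u(t)$ and shows $P \in K(G_{\preceq})$.
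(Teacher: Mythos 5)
Your proposal is correct and takes essentially the same approach as the paper: both exploit that $R$ is the affine segment in b-coordinates (since $K(G^0_\preceq)$ is an order polytope) and that $Q_u$ obeys Owen's piecewise-linear formula, reducing the ideal condition to the endpoint inequalities $x_{u'} \geq x_u$ and $y_{u'} \geq y_u$. The paper verifies the key inequality by a direct one-line computation with the explicit formula, whereas you check nonnegativity of the affine pieces of $R_{u'} - Q_u$ at the three breakpoints $0, t^*, 1$; these are two phrasings of the same argument.
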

%
\begin{proof}
	Notice that $G^0_{\preceq}$ has no edge, and $K(G^0_{\preceq})$ is a convex polytope in $[0,1]^Z$ (Example~\ref{ex:distributivelattice}).
	Therefore the unique geodesic in $K(G^0_{\preceq})$ connecting $x|_{Z}$ and $y|_{Z}$ is 
	given by $t \mapsto \sum_{v \in Z} (1-t) x_v v + t y_v v$.
	Therefore it suffices to show that if $v \preceq v'$ for $v \in Z$ and $v' \in B' $, 
	then $x_v(t) \geq x_{v'}(t)$, where $x_{v'}(t)$ obeys (\ref{eqn:formula}) 
	for some $(x|_{B'}, y|_{C'})$-concave arch.
	Then $x_v(t) =  (1-t)x_v +  t y_v \geq (1-t)x_v \geq  (1- t(\|X_{i'}\| + \|Y_{i'}\|)/\|X_{i'}\|)) x_{v'} = x_{v'}(t)$, as required.
\end{proof}
Then the unique geodesic property of $K(G_{\preceq})$ can be shown by establishing
(R3) and (R4) in a similar way.
\begin{Rem}\label{rem:MSIP}
	Again the geodesic can be constructed via 
	${\cal A}^*$, which is also obtained by a network flow technique as in Remark~\ref{rem:MSSP}.
	In MSSP,
	replace ``stable set in $G$"  by ``stable ideal in $G_{\preceq}$."
	The arch ${\cal A}^*$ is obtained by solving
	the resulting problem MSIP. 
	Consider the network in Figure~\ref{fig:network}. 
	For $u,v \in B$ (resp. $C$), 
	add edge $vu$ with infinite capacity if $u \preceq v$ (resp. $v \preceq u$).
    Again cuts having finite capacity and stable ideals are 
    in one-to-one correspondence by $T \leftrightarrow (T \cap B) \cup (C \setminus T)$.  
	Thus MSIP is solved by a (parametric) max-flow algorithm. 
	Note that MSIP is equivalent to the problem 
	known as the {\em minimum weight ideal problem} in a poset, 
	where this reduction is classically known~\cite{Picard76}; 
	see also \cite[Section 7.1 (b)]{FujiBook}.
\end{Rem}

\section{Modular semilattice}\label{sec:modularsemilattice}
A {\em modular semilattice}~\cite{BVV93} is a semilattice ${\cal L}$ 
such that every principal ideal of ${\cal L}$ is a modular lattice 
and ${\cal L}$ satisfies the lattice-theoretic flag condition (LFL).
In this section, we deal with the orthoscheme complex of a modular semilattice.
The goal in this section is to prove the following, 
which implies the main theorem (Theorem~\ref{thm:main}) via Lemma~\ref{lem:CAT(0)=uniquegeodesic}.
\begin{Thm}\label{thm:unique_geodesic}
	Let ${\cal L}$ be a modular semilattice.
	Then the orthoscheme complex $K({\cal L})$ is uniquely geodesic.
\end{Thm}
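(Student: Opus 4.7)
The plan is to establish the unique geodesic property of $K({\cal L})$ by confining any geodesic between two fixed points $x,y \in K({\cal L})$ to a subcomplex isometric to a median orthoscheme complex, and then invoking Proposition~\ref{prop:formula_for_mediansemilattice}. Write $B := \supp x$ and $C := \supp y$, which are chains in ${\cal L}$.

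First I would perform the reductions analogous to (R1)--(R2) in Section~\ref{subsec:cubical}. By Lemma~\ref{lem:decomposition2}, meeting with any element of ${\cal L}$ gives a strictly-nonexpansive retraction onto its principal ideal, and by Lemma~\ref{lem:decomposition}~(1) joining with a modular element gives a nonexpansive retraction onto its principal filter (within a principal ideal where the join exists). Iteratively applying such retractions with the top elements of $B$ and $C$ and related meets and joins, one confines any geodesic from $x$ to $y$ to the strictly-convex subcomplex supported on the chains of ${\cal L}$ that sit between $\bigwedge (B \cup C)$ and the tops of $B$ and $C$.

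Next, adapting the notion of arch from the median case, I would define an arch as a sequence $(B = U_0, U_1, \ldots, U_m = C)$ of chains in ${\cal L}$ obeying the inclusion pattern (\ref{eqn:arch}) applied to the covers of $\bigwedge (B \cup C)$ that lie along each chain. An (R3)-style argument using modular meet/join projections shows that whenever a geodesic reintroduces a $B$-side cover that was previously dropped, one of these projections strictly shortens it, forcing every geodesic to live in the path space of some arch. The union of all such path spaces is then shown to be isometric to the orthoscheme complex $K({\cal D})$ of an explicitly constructed median semilattice ${\cal D}$ whose associated PIP has as vertices the covers of $\bigwedge (B \cup C)$ relevant to $B$ and $C$, with edges recording the incompatibility (non-bounded pairs) inherited from ${\cal L}$, and partial order inherited from the covering relations. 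Once this identification is made, Proposition~\ref{prop:formula_for_mediansemilattice} together with the uniqueness of an $(x,y)$-concave arch (the (R4)-analogue, via the convex-hull/network-flow argument of Remark~\ref{rem:MSIP}) delivers the unique geodesic.

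The main obstacle is the construction and verification of the median envelope ${\cal D}$. In a modular but non-distributive principal ideal, many chains of the same length are glued together in $K({\cal L})$ along shared subsimplices, so a priori the confined subcomplex could carry genuinely non-median structure. The core technical lemma is that modularity of ${\cal L}$---and, crucially, the diamond isomorphism it imposes between cover relations on either side of a modular element---forces the constructed PIP to satisfy the lattice-theoretic flag condition (LFL), so that ${\cal D}$ is indeed a median semilattice and the embedding $K({\cal D}) \hookrightarrow K({\cal L})$ is isometric onto a strictly-convex subspace. Building the requisite strictly-nonexpansive retraction onto $K({\cal D})$ from any polygonal path that exits it is the step where the full modular hypothesis on ${\cal L}$ is essentially used; after that, the remainder of the argument runs parallel to the median case of Section~\ref{subsec:median}.
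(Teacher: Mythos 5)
The central claim of your proposal---that after confining the geodesics to arch path spaces, the union of those path spaces is isometric to a single median orthoscheme complex $K({\cal D})$---is false once ${\cal L}$ is genuinely modular rather than distributive, and this breaks the argument. The confining subsemilattice for an orthogonal pair $p = \tau(x)$, $q = \tau(y)$ is $I(p,q)$, which is a modular but in general non-median subsemilattice (Lemma~\ref{lem:I(p,q)}). Concretely, take ${\cal L} = M_3 \times M_3$, $p = (1,0)$, $q = (0,1)$, so that $I(p,q) = {\cal L}$. Arches can pass through any of the three atoms of each $M_3$ factor, but a distributive sublattice of $M_3$ contains at most two of its three atoms; hence no single distributive frame---and so no single median subsemilattice ${\cal D}$ with the embedding you describe---can simultaneously contain $\supp x$, $\supp y$, and all candidate arches. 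The "core technical lemma" you propose (that modularity forces the associated PIP to satisfy (LFL)) is aimed at the wrong obstruction: (LFL) does hold for $I(p,q)$, but that only makes it a modular semilattice, not median, and the strictly-nonexpansive retraction onto a fixed $K({\cal D})$ that you want to build does not exist.

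The paper avoids this by working arch-by-arch. For each arch ${\cal A}$ it produces a distributive frame ${\cal B} \vee {\cal C}$ containing ${\cal A}$, $\supp x$, $\supp y$ (Lemma~\ref{lem:frame}), and uses that frame only to compute the geodesic within the path space $K({\cal I}({\cal A}))$; the CAT(0) property of the individual path space is shown by gated amalgamation of modular principal ideals (Lemma~\ref{lem:K(A)}), not by embedding into a median complex. The uniqueness of the minimizing arch is then established over all of $I(p,q)$ by a supermodularity argument for $u \mapsto d(x \wedge u)^2$ (Lemma~\ref{lem:supermodular}, Proposition~\ref{prop:canonical_arch}); this is essential and cannot be replaced by the min-cut characterization of Remark~\ref{rem:MSIP}, which presupposes a median ambient. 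A secondary gap: your opening reduction does not isolate the key orthogonality condition. The paper splits along $a = \omega_q(p) \vee \omega_p(q)$ using Lemma~\ref{lem:J(a)}, solves the two halves in $K({\cal I}(a))$ (via Theorem~\ref{thm:modularlattice}) and $K({\cal F}(a))$ (the orthogonal case), and recombines via Lemma~\ref{lem:product_median} and Lemma~\ref{lem:decomposition}~(3); iterating meet/join retractions with the tops of $\supp x$ and $\supp y$, as you suggest, does not reproduce this splitting.
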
 
The proof is largely based on the idea 
mentioned in Sections~\ref{subsec:cubical} and \ref{subsec:median}, i.e., 
the formula (\ref{eqn:formula}) of path-space geodesics and 
the reduction techniques (R1-R4). The outline is as follows:
\begin{itemize}
	\item[(P0)] Let $x, y \in K({\cal L})$.
	Our goal is to give an explicit construction  
	of a geodesic between $x,y$ and to show its uniqueness.
	 Let $p$ and $q$ 
	 be the maximum elements in the nonzero supports of $x$ and $y$, respectively.
	  
	 \item[(P1)] Consider first an easier case where $p$ and $q$ have join.  
	 Then any geodesic between $x,y$ belongs 
	 to a strictly-convex subspace $K({\cal I}(p \vee q))$ (Lemma~\ref{lem:decomposition}).
	 Now ${\cal I}(p \vee q)$ is a modular lattice, and $K({\cal I}(p \vee q))$ is CAT(0) (Theorem~\ref{thm:modularlattice}).
	 The geodesic in $K({\cal I}(p \vee q))$ is unique.
	 Moreover the geodesic is easily constructed 
	 by taking a distributive sublattice ${\cal D}$ for which $K({\cal D})$ contains $x$ and $y$ 
	 (Lemmas~\ref{lem:apartment} and \ref{lem:modularlattice}).
	 \item[(P2)] Next consider the essential case 
	 where there is no element $u$ other than $0$ 
	 that has join with both $p$ and $q$.
	 In this case, an analogue of (R1) holds: 
	 \begin{itemize}
	 	\item Every geodesic connecting $x,y$ belongs to 
	 	the subspace $K(I(p,q))$ induced by modular subsemilattice 
	 	\[
	 	I(p,q) = \{ u \vee v \mid u \in {\cal I}(p), v \in {\cal I}(q) \}.
	 	\]
	 \end{itemize}
	 \item[(P3)] This modular semilattice $I(p,q)$ generalizes and plays roles of
	 a median semilattice represented by a bipartite PIP.
	 The concepts of an arch, path space, $(x,y)$-concavity, and $v({\cal A};x,y)$
	 are naturally generalized. 
	 By taking a special median subsemilattice (called a {\em distributive frame}) of $I(p,q)$, 
	 we obtain the formula of path-space geodesics
	 (Proposition~\ref{prop:formula_for_modularsemilattice}). 
	 We then establish (R3) and (R4) to obtain the uniqueness 
	 of geodesics (Propositions~\ref{prop:P-sequence=>arch} and \ref{prop:formula_for_modularsemilattice}).
	 \item[(P4)] Consider the general case of $x$ and $y$.
	 We choose a suitable element $a$ 
	 for which $a \vee x$ and $a \vee y$ are in the case of (P2) for ${\cal F}(a)$.
	 According to (P1), we obtain the unique geodesic $Q$ between 
	 $a \wedge x$ and $a \wedge y$ in $K({\cal I}(a))$.
	 According to (P3), we obtain the unique geodesic $R$ between 
	 $a \vee x$ and $a \vee y$ in $K({\cal F}(a))$.
	 Finally, with the help of Lemma~\ref{lem:decomposition}~(3) and Lemma~\ref{lem:product_median},
	 we combine $Q,R$ to a geodesic $P$ between $x,y$ and show its uniqueness. 
\end{itemize}
 In Section~\ref{subsec:lemmas}, we prove several necessary lemmas.
 In Section~\ref{subsec:geodesics},
 we complete the proof of Theorem~\ref{thm:unique_geodesic} 
 according to (P3) and (P4).

 In the following, all sublattices and subsemilattices satisfy (RP) 
 (or can be chosen so to satisfy (RP)), and 
 the corresponding subcomplexes are subspaces of the original space.
 In all cases, the verification of (RP) is straightforward and hence omitted.
\subsection{Lemmas}\label{subsec:lemmas} 
Let ${\cal L}$ be a modular semilattice.
Let $a$ be an element of ${\cal L}$.
Let ${\cal J}(a)$ denote
the set of elements $q$ such that $a$ and $q$ have join.
Define map $\omega_a:{\cal L} \to {\cal J}(a)$ by 
\[
\omega_a(p) := \mbox{the maximum element in ${\cal I}(p)$ having join with  $a$}.
\]
Then $\omega_a(p)$ is well-defined. 
Indeed, if  $p',p'' \preceq p$ have the join with $a$, 
then, by (LFL), their join $a \vee p' \vee p''$ exists 
(since $p' \vee p'' \preceq p$ exists). 
We also observe that $\omega_a(p) = p$ if and only if $p$ has join with $a$, 
i.e., $\omega_a$ is a retraction to ${\cal J}(a)$.
\begin{Lem}\label{lem:J(a)}
	For an element $a \in {\cal L}$, we have the following:
	\begin{itemize}
		\item[{\rm (1)}] ${\cal J}(a)$ is a modular subsemilattice of ${\cal L}$.
		\item[{\rm (2)}] $\omega_a$ is a nonexpansive order-preserving retraction to ${\cal J}(a)$.
		\item[{\rm (3)}] Subspace $K({\cal J}(a))$ is strictly-convex.
	\end{itemize}
\end{Lem}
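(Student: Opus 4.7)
\medskip\noindent\textbf{Proof plan for Lemma~\ref{lem:J(a)}.}
The plan is to establish the three parts in order, with each part reusing the previous ones: first that ${\cal J}(a)$ is closed under meet and inherits modularity from ${\cal L}$, next that $\omega_a$ behaves well on covering pairs, and finally that $\omega_a$ is a strictly-nonexpansive retraction, which yields strict-convexity by the criterion stated after Lemma~\ref{lem:CAT(0)=uniquegeodesic}.

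For (1), I would first observe that ${\cal J}(a)$ is a downward-closed subset of ${\cal L}$: if $p\in{\cal J}(a)$ and $q\preceq p$, then $q$ and $a$ share the upper bound $p\vee a$, so $q\vee a$ exists. From this it is immediate that ${\cal J}(a)$ is closed under meet (the existing meet in ${\cal L}$ lies below $p$) and that the principal ideal of any $p\in{\cal J}(a)$ taken in ${\cal J}(a)$ coincides with ${\cal I}(p)$ in ${\cal L}$, which is a modular lattice by assumption. For closure under existing joins and for (LFL) I would apply (LFL) in ${\cal L}$ twice: if $u,v,w\in{\cal J}(a)$ have pairwise joins existing, then $u\vee v$, $v\vee a$, $u\vee a$ all exist so $u\vee v\vee a$ exists, hence $u\vee v\in{\cal J}(a)$; and a second application to $\{u\vee v,w,a\}$ shows $u\vee v\vee w$ exists and lies in ${\cal J}(a)$.

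For (2), well-definedness of $\omega_a$ is exactly the downward statement of (LFL) applied to $\{p',p'',a\}$ for $p',p''\preceq p$ in ${\cal J}(a)$. Order-preservation is immediate from $\omega_a(p)\preceq p\preceq q$ and $\omega_a(p)\in{\cal J}(a)$. The retraction property follows from $p\in{\cal J}(a)\Rightarrow \omega_a(p)=p$. The crucial step is nonexpansiveness on a covering pair $(p,q)$. I would argue inside the modular lattice ${\cal I}(q)$: setting $r:=\omega_a(q)\wedge p$, downward-closedness of ${\cal J}(a)$ gives $r\in{\cal J}(a)$, and since $r\preceq p$ we get $r\preceq \omega_a(p)$; the reverse inclusion $\omega_a(p)\preceq\omega_a(q)\wedge p$ is clear from order-preservation, so $\omega_a(p)=\omega_a(q)\wedge p$. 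The modular rank identity inside ${\cal I}(q)$ then gives
\[
r(\omega_a(q))-r(\omega_a(p))=r(\omega_a(q)\vee p)-r(p),
\]
and because $p\prec q$ is a covering pair the element $\omega_a(q)\vee p$ equals $p$ or $q$, so the right-hand side is $0$ or $1$. Hence $\omega_a(q)$ either equals $\omega_a(p)$ or covers it, proving nonexpansiveness; Lemma~\ref{lem:strictly_decrease}~(1) then extends $\omega_a$ to a nonexpansive continuous retraction on orthoscheme complexes.

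For (3), I will imitate the proof of Lemma~\ref{lem:decomposition}~(2). Given $x,y\in K({\cal J}(a))$ and a polygonal path $P$ connecting them which leaves $K({\cal J}(a))$, I pick two consecutive points $x'=\sum_i\lambda_i p_i$ and $y'=\sum_i\lambda'_i p_i$ on $P$ lying in a common simplex of a maximal chain, with $x'\in K({\cal J}(a))$ and $y'\notin K({\cal J}(a))$. By the downward-closedness from (1), there is a smallest $k$ with $p_k\notin{\cal J}(a)$; then $\lambda_j=0$ for $j\geq k$ while some $\lambda'_j$ with $j\geq k$ is positive, so $\sum_{j\geq k}\lambda_j\neq\sum_{j\geq k}\lambda'_j$. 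Since $p_{k-1}\preceq\omega_a(p_k)\prec p_k$ and $(p_{k-1},p_k)$ is covering, $\omega_a(p_k)=p_{k-1}=\omega_a(p_{k-1})$, so the hypothesis of Lemma~\ref{lem:strictly_decrease}~(2) is met and $d(\omega_a(x'),\omega_a(y'))<d(x',y')$. Together with nonexpansiveness elsewhere this gives $d(\omega_a(P))<d(P)$, establishing strict-convexity of $K({\cal J}(a))$.

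The main obstacle is the nonexpansiveness of $\omega_a$ in part (2); the identification $\omega_a(p)=\omega_a(q)\wedge p$ is what allows one to apply the modular rank equation inside ${\cal I}(q)$ even though $q$ itself need not lie in ${\cal J}(a)$, and this observation drives both (2) and (3).
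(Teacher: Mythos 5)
Your proposal is correct and follows essentially the same route as the paper: downward-closedness and join-closedness of ${\cal J}(a)$ via (LFL) for part (1), the sandwich $\omega_a(q)\wedge p \preceq \omega_a(p) \preceq \omega_a(q)$ combined with the modular rank identity in ${\cal I}(q)$ for part (2), and the strict-decrease argument using Lemma~\ref{lem:strictly_decrease}~(2) on a segment that exits $K({\cal J}(a))$ for part (3). The only cosmetic difference is that you upgrade the lower bound to the identity $\omega_a(p)=\omega_a(q)\wedge p$ and spell out the rank equation, whereas the paper deduces $r[\omega_a(p),\omega_a(q)]\le 1$ directly from the sandwich and the nonexpansiveness of $\omega_a(q)\wedge{\cdot}$; the content is the same.
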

\begin{proof}
	(1). 
	It is obvious that $p,q \in {\cal J}(a)$ implies $p \wedge q \in {\cal J}(a)$.
	Thus ${\cal J}(a)$ is a subsemilattice of ${\cal L}$.
    Also ${\cal J}(a)$ is join-closed, i.e., $p,q \in {\cal J}(a)$ and $p \vee q \in {\cal L}$ 
    imply $p \vee q \in {\cal J}(a)$ (by (LFL)). 	
	Clearly $p \in {\cal J}(a)$ implies ${\cal I}(p) \subseteq {\cal J}(a)$.
   Hence every principal ideal of ${\cal J}(a)$ is a modular lattice.
	Suppose that $p \vee q$, $q \vee r$, and $r \vee p$ have join with $a$.
	By (LFL) (and the join-closedness), necessarily $p \vee q \vee r$ exists.
	Then $p \vee a$, $p \vee q$, and $r \vee p$ have pairwise joins.
	Hence $p \vee q \vee r \vee a$ exists, and $p \vee q \vee r \in {\cal J}(a)$.
	This means that ${\cal J}(a)$ is a modular semilattice.

	(2). %
	Take a covering pair $(p,q)$.
	Obviously $\omega_a (p) \preceq \omega_a (q)$. Hence $\omega_a$ is order-preserving.
	Here $\omega_a(q) \wedge p$ and $a$ have join (since $\omega_a(q) \vee a$ is a common upper bound).
	Thus $\omega_a(q) \wedge p \preceq \omega_a(p) \preceq \omega_a(q) = \omega_a(q) \wedge q$.
	Since $r[\omega_a(q) \wedge p, \omega_a(q) \wedge q]$ is 
	at most $1$ (by modularity), 
	we have $r[\omega_a(p),\omega_a(q)] \leq 1$, and the nonexpansiveness of $\omega_a$.
	
	(3). Let $u,v \in K({\cal J}(a))$.
	Take a polygonal path $P$ connecting $u$ and $v$ 
	such that $P$ meets $K({\cal L}) \setminus K({\cal J}(a))$.
	By (2) and Lemma~\ref{lem:strictly_decrease}, the extension 
	$\omega_a: K({\cal L}) \to K({\cal J}(a))$ 
	is a nonexpansive retraction, and hence $d(\omega_a(P)) \leq d(P)$ holds.
	We show the strict inequality.
	We can take two points $x,y$ in $P$ 
	such that the segment $[x,y]$ belongs to a common simplex, 
	$x \in K({\cal J}(a))$, and $y \not \in K({\cal J}(a))$.
	Suppose that $x = \sum_{i} \lambda_i p_i$ and $y = \sum_i \lambda'_i p_i$.
	For some index $k$, it necessarily hold that $p_j \vee a$ exists for $j \leq k$, 
	$p_{j} \vee a$ does not exist for $j > k$, $\sum_{j: j> k} \lambda_i = 0$, 
	and $\sum_{j: j> k} \lambda'_i \neq 0$.
	Also $p_k = \omega_a(p_k) = \omega_a(p_{k+1})$.
	By Lemma~\ref{lem:strictly_decrease}~(2), it holds 
	$d(\omega_a(x), \omega_a(y)) < d(x,y)$, and hence  $d(\omega_a(P)) < d(P)$.
	Thus $K({\cal L}(a))$ is strictly-convex.
\end{proof}

A classical theorem by Dedekind and Birkhoff is that 
for any two chains in a modular lattice
there is a distributive sublattice containing them; 
see \cite[Theorem 3.18]{Birkhoff} and \cite[Theorem 363]{Gratzer}.
The next lemma is viewed as a generalization of this result. 
\begin{Lem}\label{lem:apartment}
	Let $p,q \in {\cal L}$.
	For four maximal chains 
	$\varPi \subseteq {\cal I}(p)$, $\varSigma \subseteq {\cal I}(q)$, $\varPi' \subseteq [p \wedge q, p]$,  
	and $\varSigma' \subseteq [p \wedge q, q]$, 
	there are distributive sublattices ${\cal B}$ of ${\cal I}(p)$
	and ${\cal C}$ of ${\cal I}(q)$ satisfying the following properties:
	\begin{itemize}
		\item[{\rm (1)}] ${\cal B}$ contains $\varPi$, $\varPi'$, and $(p \wedge q) \wedge \varSigma$. 
		\item[{\rm (2)}] ${\cal C}$ contains $\varSigma$, $\varSigma'$, and $(p \wedge q) \wedge \varPi$.
		\item[{\rm (3)}] ${\cal B} \cap {\cal C} = {\cal B} \cap {\cal I}(p \wedge q) = {\cal C} \cap {\cal I}(p \wedge q)$.
	\end{itemize}
\end{Lem}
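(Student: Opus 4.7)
The plan is to deduce the lemma from the classical Dedekind--Birkhoff theorem by choosing the generating chains carefully and extracting property~(3) via a disjunctive normal form argument. As a preliminary, I would observe that for any maximal chain $\varSigma = (s_0 \prec s_1 \prec \cdots \prec s_{n'})$ in ${\cal I}(q)$, the sequence $\{s_i \wedge (p \wedge q)\}_i$, after removing repeated entries, is a maximal chain in ${\cal I}(p \wedge q)$. This follows from the rank identity $r(s_i) + r(p \wedge q) = r(s_i \vee (p \wedge q)) + r(s_i \wedge (p \wedge q))$, which holds because $p \wedge q$ is a modular element of ${\cal I}(q)$; the analogous statement holds for $\varPi$. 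Let ${\cal D}$ denote the sublattice of ${\cal I}(p \wedge q)$ generated by the two resulting maximal chains $(p \wedge q) \wedge \varPi$ and $(p \wedge q) \wedge \varSigma$; it is distributive by Dedekind--Birkhoff applied in the modular lattice ${\cal I}(p \wedge q)$.

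Next I would define ${\cal B}$ as the sublattice of ${\cal I}(p)$ generated by the maximal chain $\varPi$ together with the maximal chain $((p \wedge q) \wedge \varSigma) \cup \varPi'$, and analogously ${\cal C}$ as the sublattice of ${\cal I}(q)$ generated by $\varSigma$ and $((p \wedge q) \wedge \varPi) \cup \varSigma'$. Dedekind--Birkhoff applied in ${\cal I}(p)$ and ${\cal I}(q)$ respectively forces both ${\cal B}$ and ${\cal C}$ to be distributive. Properties~(1) and~(2) follow immediately from the construction, and moreover ${\cal D} \subseteq {\cal B} \cap {\cal C}$, since each generator of ${\cal D}$ lies in both sublattices (using $p \wedge q \in {\cal B} \cap {\cal C}$ to obtain $(p \wedge q) \wedge \varPi$ inside ${\cal B}$ and $(p \wedge q) \wedge \varSigma$ inside ${\cal C}$ as meets).

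For property~(3), the inclusion ${\cal B} \cap {\cal C} \subseteq {\cal I}(p) \cap {\cal I}(q) = {\cal I}(p \wedge q)$ is automatic, and together with the previous paragraph this reduces the whole identity to the single inclusion ${\cal B} \cap {\cal I}(p \wedge q) \subseteq {\cal D}$ (the counterpart for ${\cal C}$ being symmetric). Here I would exploit the distributivity of ${\cal B}$: every $u \in {\cal B}$ admits a disjunctive normal form $u = \bigvee_j \bigwedge_i g_{ji}$ with $g_{ji} \in \varPi \cup \varPi' \cup ((p \wedge q) \wedge \varSigma)$. Under the hypothesis $u \preceq p \wedge q$, each meet-clause $m_j := \bigwedge_i g_{ji}$ also satisfies $m_j \preceq p \wedge q$, and hence $m_j = \bigwedge_i (g_{ji} \wedge (p \wedge q))$. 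A short case analysis on which of the three chains contains $g_{ji}$ shows that each factor lies in ${\cal D}$: it is either an element of $(p \wedge q) \wedge \varPi$, equal to $p \wedge q$ itself (when $g_{ji} \in \varPi'$), or an element of $(p \wedge q) \wedge \varSigma$. Therefore $m_j \in {\cal D}$ and $u = \bigvee_j m_j \in {\cal D}$.

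The main delicacy lies in this last step: one must verify that the meets and joins in the DNF expansion, when computed inside ${\cal B} \subseteq {\cal I}(p)$ among elements below $p \wedge q$, coincide with the corresponding operations inside ${\cal I}(p \wedge q)$. This is immediate from $p \wedge q$ being a common upper bound in ${\cal I}(p)$, but it is the essential point that prevents extraneous elements from appearing in ${\cal B} \cap {\cal I}(p \wedge q)$. Conceptually, distributivity of ${\cal B}$ forces any such element to be expressible using only the shadows in ${\cal I}(p \wedge q)$ of the generators of ${\cal B}$, and these shadows are precisely the generators of ${\cal D}$.
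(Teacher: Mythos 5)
Your proof is correct and takes a genuinely different route from the paper's.

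The paper proceeds by induction on $r[p \wedge q, p] + r[p \wedge q, q]$: in the inductive step it shrinks $p$ one step down $\varPi'$ to $u_{s-1}$, obtains $\tilde{\cal B}$ by hypothesis, and then explicitly extends it to ${\cal B} = \tilde{\cal B} \cup (\tilde{\cal B} \vee p_j)$ for a carefully chosen $p_j$, after which it hand-verifies distributivity of ${\cal B}$ by a case analysis and reads off property~(3) from the construction. Your argument bypasses the induction entirely: the key observation is that $((p\wedge q)\wedge\varSigma) \cup \varPi'$ is itself a single maximal chain in ${\cal I}(p)$ (the first segment runs from $0$ to $p \wedge q$, the second from $p \wedge q$ to $p$, and the covers are preserved since these principal ideals are rank-preserving subposets), so Dedekind--Birkhoff applies in one shot to $\varPi$ and this concatenated chain to give a distributive ${\cal B}$ directly. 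Property~(3) is then extracted via the disjunctive-normal-form description of the sublattice generated by a set in a distributive lattice, using the observations that each meet-clause below $p \wedge q$ can be rewritten with factors meeted against $p \wedge q$, and that meet/join of elements of ${\cal I}(p \wedge q)$ computed in ${\cal I}(p)$ agree with those computed in ${\cal I}(p \wedge q)$. Both proofs are sound; yours is shorter and more conceptual, avoiding the paper's somewhat tedious verification that the inductively extended ${\cal B}$ remains distributive, while the paper's yields a slightly more explicit description of ${\cal B}$ (as an iterated adjunction), which is not used elsewhere, so nothing is lost by adopting your route.
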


\begin{proof}
	Let $s = r[p \wedge q, p]$ and $t = r[p \wedge q,q]$.
	We use the induction on $s + t$.
	Suppose that $s = t = 0$.
	Then $p = q = p \wedge q$, and $\varPi' = \varSigma'= \{ p \wedge q \}$.
	This case reduces to the original Dedekind--Birkhoff theorem; 
	but the following argument is easily adapted to prove this case.
	
	Suppose that $s > 0$. Suppose 
	that $\varPi = (0 = p_0 \prec p_1 \prec \cdots \prec p_n = p)$ with $n = r(p)$ 
	and $\varPi' = (p \wedge q = u_0 \prec u_1 \prec \cdots \prec u_s = p)$. 
	Consider chain $\tilde \varPi' := (u_0 \prec u_1 \prec \cdots \prec u_{s-1})$, and 
	chain $\tilde \varPi$ consisting of $p'_i := u_{s-1} \wedge p_i$ for $i=0,1,2\ldots,n$.
	Apply the induction on $\tilde \varPi, \tilde \varPi', \varSigma, \varSigma'$.
	We obtain distributive sublattices $\tilde {\cal B}$ of ${\cal I}(u_{s-1})$ 
	and ${\cal C}$ of ${\cal I}(q)$ 
	such that $\tilde {\cal B}$ contains $\tilde \varPi,(p \wedge q) \wedge \varSigma, \tilde \varPi'$, 
	${\cal C}$ contains $\varSigma, \varSigma', (p \wedge q) \wedge \tilde \varPi = (p \wedge q) \wedge  \varPi$, and 
	$\tilde {\cal B} \cap {\cal C} = \tilde{B} \cap {\cal I}(p \wedge q) = {\cal C} \cap {\cal I}(p \wedge q)$.
	
	We extend $\tilde {\cal B}$ to ${\cal B}$ 
	so that ${\cal B}$ contains $u_s =p$ and $\varPi$.
	Choose the smallest index $j$ such that $p_j \not \preceq u_{s-1}$.
	Since $u_{s-1}$ is covered by $p_n = u_s$, by modularity, 
	$p'_{k}$ is covered by $p_{k}$ for $k \geq j$.
	In particular, $p_{j-1} = p'_{j}$, and $p_{k}$ is equal to $p'_{k} \vee p_{j}$.
	Define ${\cal B}$ by
	\begin{equation}\label{eqn:B_vee_p_j}
	{\cal B} := \tilde{\cal B} \cup (\tilde {\cal B} \vee p_j) = \tilde {\cal B} \cup \{ v \vee p_j  \mid v \in \tilde {\cal B}: p_{j-1} \preceq v \preceq u_{s-1} \},
	\end{equation}
	where the second equality follows from 
	$w \vee p_j = w \vee p_{j-1} \vee p_j$ and $w \vee p_{j-1} \in \tilde {\cal B}$ 
	with $p_{j-1} \preceq w \vee p_{j-1} \preceq u_{s-1}$.
	Now $\{ v \vee p_j  \mid v \in \tilde {\cal B}: p_{j-1} \preceq v \preceq u_{s-1} \}$ 
	is isomorphic 
	to the interval between $p_{j-1}$ and $u_{s-1}$ in $\tilde{\cal B}$.
	Then, for $w \in {\cal B} \setminus \tilde{\cal B}$,
	there is unique $w' \in \tilde{\cal B}$ such that $p_{j-1} \preceq w' \preceq u_{s-1}$
	and $w = w' \vee p_{j}$.
	Then it holds
	\begin{equation}\label{eqn:a_wedge_w}
	a \wedge w = a \wedge w' \quad ( a \in \tilde {\cal B}).
	\end{equation}
	Indeed, $w$ covers $w'$ and $a \vee w$ covers $a \vee w'$.
	This implies $r[a \wedge w, a] = r[w,a \vee w] = r[w', a \vee w'] = r[a \wedge w',a]$.
	Then, by $a \wedge w' \preceq a \wedge w$, the equality must hold.
	In particular ${\cal B} (\supseteq \tilde {\cal B})$ is a sublattice of ${\cal I}(p)$ 
	containing $u_s$ and $\varPi$.
	
    It is easy but bit tedious to verify that ${\cal B}$ satisfies 
    the distributive law.
    Take $a,b,c \in {\cal B}$. 
    Suppose, e.g., that $c \in \tilde {\cal B} \not \ni a,b$.
    Then $(a \wedge c) \vee (b \wedge c) = (a' \wedge c) \vee (b' \wedge c) 
    = (a' \vee b') \wedge c = (a \vee b) \wedge c$, 
    where we use (\ref{eqn:a_wedge_w}) for the first and last equalities and 
    the distributive law in ${\cal B}$ for the second.
    For the last equality, 
    we use the fact that map $u \mapsto u \vee p_j$ is an isomorphism 
    from $[p_{j-1},u_{s-1}]$ to $[p_{j},u_{s}]$ in ${\cal B}$, 
    i.e., $(a \vee b)' = a' \vee b'$.   
    Similarly,
    $(a \vee c) \wedge (b \vee c) = (p_j \vee a' \vee c) \wedge (p_j \vee b' \vee c) = 
    p_j \vee ((a' \vee c) \wedge (b' \vee c)) = p_j \vee (a' \wedge b') \vee c = 
    (a \wedge b) \vee c$. 
    Suppose, e.g., that $b, c \not \in \tilde {\cal B} \ni a$.
    Then $(a \wedge c) \vee (b \wedge c) = (a \wedge c') \vee (b' \wedge c') \vee p_j = (a \wedge b') \vee c' \vee p_j = (a \wedge b) \vee c$, and
     $(a \vee c) \wedge (b \vee c) = (a \vee c' \vee p_j) \wedge (b' \vee c' \vee p_j) = ((a \vee c') \wedge (b' \vee c')) \vee p_j = (a \wedge b') \vee c' \vee p_j = (a \wedge b) \vee c$, 
     where from $a \vee c', b' \vee c' \in [p_{j-1},u_{s-1}]$  
     we use the isomorphic property of $u \mapsto u \vee p_j$ in the second equality.  
    The verifications for other cases are similar (more easy).
    By construction, the property (3) obviously holds. 
\end{proof}
The essence of the proof of Theorem~\ref{thm:modularlattice} in \cite{CCHO14} is 
the following.
\begin{Lem}[{\cite[Lemma 7.13]{CCHO14}}]\label{lem:modularlattice}
	Let ${\cal M}$ be a modular lattice and let ${\cal D}$ a distributive sublattice of ${\cal M}$. 
	Suppose that the join-irreducible elements $b_1,b_2,\ldots,b_n$ of ${\cal D}$ are arranged so that
	\[
	0 \prec b_1 \prec b_1 \vee b_2 \prec b_1 \vee b_2 \vee b_3 \prec \cdots \prec b_1 \vee \cdots \vee b_n
	\]
	is a maximal chain in ${\cal M}$.
	Then the map
	\begin{equation}\label{eqn:map}
	p \mapsto \{ b_i \mid  i: p \wedge (b_1 \vee b_2 \vee \cdots \vee b_{i}) \succ 
	p \wedge (b_1 \vee b_2 \vee \cdots \vee b_{i-1}) \} 
	\end{equation}
	is an order-preserving nonexpansive map from ${\cal M}$ to $\overline{\cal D} = 2^{\{b_1,\ldots,b_n\}}$
	such that it is identity on ${\cal D}$. 
	Consequently, $K({\cal D})$ is a (strictly-)convex subspace of $K({\cal M})$.
\end{Lem}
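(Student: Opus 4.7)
The plan is to verify the three properties of $\phi$ (order-preserving, nonexpansive, identity on ${\cal D}$) one by one, then deduce the convexity consequence by routing $K({\cal M})$ through the Boolean extension $K(\overline{\cal D})$, inside which $K({\cal D})$ already sits as the order polytope.

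Write $c_i := b_1 \vee \cdots \vee b_i$, so by hypothesis $c_{i-1}$ is covered by $c_i$ in ${\cal M}$. The central modular input is that for any $p \in {\cal M}$ and any covering pair $x \precdot y$ in ${\cal M}$, the rank identity $r(p) + r(x) = r(p \vee x) + r(p \wedge x)$ yields $r[p \wedge x, p \wedge y] \leq 1$; applied along the chain $(c_i)$ this makes each step $p \wedge c_{i-1} \preceq p \wedge c_i$ either a cover or an equality, so $\phi(p)$ is well-defined and has cardinality $r(p \wedge c_n)$. For order-preservation, if $p \preceq q$ and $b_i \in \phi(p)$, then assuming $b_i \notin \phi(q)$ gives $q \wedge c_i = q \wedge c_{i-1} \preceq c_{i-1}$, hence $p \wedge c_i \preceq c_{i-1}$, forcing $p \wedge c_i = p \wedge c_{i-1}$, a contradiction. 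For nonexpansiveness, a covering pair $q \succ p$ together with the same modular bound (with $c_n$ in place of $y$) forces $|\phi(q)| - |\phi(p)| \leq 1$, so $\phi(q) = \phi(p)$ or $\phi(q)$ covers $\phi(p)$ in $\overline{\cal D}$.

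The identity-on-${\cal D}$ claim uses both distributivity and maximality of the chain. By Birkhoff, ${\cal D}$ is isomorphic to the poset of order ideals of $(V, \preceq)$, where $V = \{b_1, \ldots, b_n\}$, and maximality of $(c_i)$ in ${\cal M}$ (so also in ${\cal D}$) ensures that the ideals $J_i$ corresponding to $c_i$ satisfy $J_i = J_{i-1} \cup \{b_i\}$. For $d \in {\cal D}$ with ideal $I_d$, distributivity gives $d \wedge c_i = \bigvee (I_d \cap J_i)$, so $d \wedge c_i \succ d \wedge c_{i-1}$ iff $b_i \in I_d$ iff $b_i \preceq d$; thus $\phi(d) = \{b_i : b_i \preceq d\}$, which is exactly the Birkhoff embedding of $d$ into $\overline{\cal D}$.

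For the consequence, by Example~\ref{ex:distributivelattice} the complex $K({\cal D})$ is isometric to the order polytope of $(V,\preceq)$ inside the cube $K(\overline{\cal D}) = [0,1]^V$, hence is a (strictly-)convex subspace of $K(\overline{\cal D})$. The extension $\phi : K({\cal M}) \to K(\overline{\cal D})$ from Lemma~\ref{lem:strictly_decrease}~(1) is nonexpansive and fixes $K({\cal D})$, so for every path $P$ in $K({\cal M})$ between $u,v \in K({\cal D})$ one obtains $d(P) \geq d(\phi(P)) \geq d_{K({\cal D})}(u,v)$; taking infimum over $P$ yields convexity. The main obstacle will be the strict inequality: when a polygonal $P$ passes through a simplex whose chain contains some $p_j \notin {\cal D}$, one has to locate an index $k$ along that chain at which $\phi$ collapses the cover $(p_{k-1},p_k)$ to an equality, so as to invoke Lemma~\ref{lem:strictly_decrease}~(2) and conclude $d(\phi(P)) < d(P)$; the expected bookkeeping parallels the $a \wedge$ argument in the proof of Lemma~\ref{lem:decomposition}~(2), with $c_n$ playing the role of $a$.
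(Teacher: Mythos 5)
Your verification of the map-level properties is essentially correct. Writing $c_i = b_1 \vee \cdots \vee b_i$, the observation that modularity makes each step $p \wedge c_{i-1} \preceq p \wedge c_i$ a cover or an equality is right, the order-preservation argument (``$b_i \in \phi(p)$, $b_i \notin \phi(q)$, $p \preceq q$ forces $p\wedge c_i \preceq c_{i-1}$, contradiction'') is correct, and the identity-on-${\cal D}$ argument via $J_i = J_{i-1} \cup \{b_i\}$ and $d \wedge c_i = \bigvee(I_d \cap J_i)$ is fine. Note that since the chain is maximal in ${\cal M}$ and ${\cal M}$ is a lattice of finite rank, $c_n$ is the top element of ${\cal M}$, so $|\phi(p)| = r(p \wedge c_n) = r(p)$; hence for a cover $p \precdot q$ one gets $|\phi(q)| = |\phi(p)|+1$ and, combined with $\phi(p) \subseteq \phi(q)$, the image is \emph{always} a covering pair. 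Your deduction of non-strict convexity from $d(P) \geq d(\phi(P)) \geq \|u-v\| = d_{K({\cal D})}(u,v)$ is also correct.

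The flaw is in the strict-convexity step, and it is precisely the fact just highlighted that breaks the plan. Lemma~\ref{lem:strictly_decrease}~(2) requires a collapsed cover $\phi(p_k) = \phi(p_{k-1})$, but since $|\phi(p)| = r(p)$, the map $\phi$ never collapses a covering pair --- it is an isometry on every simplex of $K({\cal M})$. The suggested parallel ``with $c_n$ playing the role of $a$'' in the style of Lemma~\ref{lem:decomposition}~(2) is vacuous: $c_n$ is $1_{\cal M}$, so $c_n \wedge \cdot$ is the identity and yields no contraction. So one cannot obtain $d(\phi(P)) < d(P)$ along these lines, even when $P$ leaves $K({\cal D})$; indeed $\phi$ can map a simplex on a chain through elements outside ${\cal D}$ onto a simplex of $K(\overline{\cal D})$ that only touches $K({\cal D})$ along a face, with no length shrinkage. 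The clean way to finish, given the tools already assembled in the paper, is not a strictly-nonexpansive retraction at all: use Theorem~\ref{thm:modularlattice}, which gives that $K({\cal M})$ is CAT(0) and hence uniquely geodesic; combined with the convexity you already established, the unique geodesic between $u,v \in K({\cal D})$ is the one lying in $K({\cal D})$, so no other shortest path can exist, which is strict convexity. (For what it is worth, the paper itself gives no proof of this lemma --- it is cited from \cite[Lemma 7.13]{CCHO14} --- so your write-up of the map-level properties is a genuine contribution; only the last step needs the repair above.)
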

In this lemma, ${\cal D}$ is viewed as a sublattice of ${\cal M}$ as well as of $\overline{\cal D}$.
Recall Example~\ref{ex:distributivelattice} (or Proposition~\ref{prop:median_semilattice}) that $K({\cal D})$ 
is realized as a convex polytope in $[0,1]^{\{b_1,\ldots,b_n\}}$.
Then the unique geodesic in $K({\cal M})$ 
connecting two points $x,y$ is obtained as follows:
\begin{itemize}
	\item Choose a distributive sublattice ${\cal D}$ of ${\cal M}$ 
	containing two chains $\supp x$ and $\supp y$.
	\item Realize $K({\cal D})$ 
	as a convex polytope in $[0,1]^{n}$ and represent $x,y$ in the b-coordinate.
	\item $t \mapsto (1-t)x + ty$ is a geodesic between $x$ and $y$.  
\end{itemize}

Next we introduce $I(p,q)$ as the metric interval of $p,q$.
The {\em covering graph} of ${\cal L}$ is 
the undirected graph on 
${\cal L}$ such that each pair $\{u,v\}$ has an edge  
if and only if $(u,v)$ or $(v,u)$ is a covering pair. 
Let $d_{\cal L}$ denote the shortest path 
metric of the covering graph of ${\cal L}$.
For $p,q \in {\cal L}$, define $I(p,q)$ by
\begin{equation*}
I(p,q) := \{ u \in {\cal L} \mid d_{\cal L}(p,q)  = d_{\cal L}(p,u) + d_{\cal L}(u,q) \}.
\end{equation*}
For two subsets ${\cal B}, {\cal C} \subseteq {\cal L}$, 
let ${\cal B} \vee {\cal C}$ denote the set of elements $u$ that 
is represented as $u= b \vee c$ for some $b \in {\cal B}$ and $c \in {\cal C}$. 
\begin{Lem}[{\cite[Lemma 2.15]{HH0ext}}]~\label{lem:I(p,q)}
	For $p,q \in {\cal L}$, we have the following.
	\begin{itemize}
		\item[{\rm (1)}] $d_{\cal L}(p,q) = r[p \wedge q, p] + r[p \wedge q, q]$.
		\item[{\rm (2)}] $
		I(p,q) = [p \wedge q,p] \vee  [p \wedge q,p]$.
		\item[{\rm (3)}] If $u = b \vee c$ for 
		$b \in [p \wedge q, p], c \in [p \wedge q, q]$,
		then $b = p \wedge u$ and $c = q \wedge u$.
		\item[{\rm (4)}] $I(p,q)$ is a  modular subsemilattice in ${\cal L}$.
		For $u,u' \in I(p,q)$, it holds $u \wedge u' =
		(u \wedge u' \wedge p) \vee (u \wedge u' \wedge q)$.
	\end{itemize}
\end{Lem}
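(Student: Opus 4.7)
My plan is to establish the four parts in order, relying on the nonexpansive retractions $u\mapsto u\wedge a$ to principal ideals (Lemma~\ref{lem:decomposition2}) together with the modular rank identity $r(a)+r(x)=r(a\wedge x)+r(a\vee x)$ available inside each principal ideal. Parts (2) and (3) will be reduced to a single rank equation obtained by expanding the defining distance equality of $I(p,q)$, and (4) will be reduced to a distributive-lattice computation via Lemma~\ref{lem:apartment}.

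For (1), the bound $\leq$ is realized by concatenating maximal chains $p\succ\cdots\succ p\wedge q\prec\cdots\prec q$. For the matching lower bound, fix a covering-graph path $p=u_0,u_1,\dots,u_n=q$ and set $v_i:=u_i\wedge p$, $w_i:=u_i\wedge q$. By Lemma~\ref{lem:decomposition2} both sequences are the images of nonexpansive order-preserving maps, so $\delta^v_i:=r(v_{i+1})-r(v_i)$ and $\delta^w_i:=r(w_{i+1})-r(w_i)$ each lie in $\{-1,0,1\}$ and share the same sign (dictated by whether $u_{i+1}\succ u_i$ or $u_{i+1}\prec u_i$). Hence $|\delta^v_i-\delta^w_i|\leq 1$, and telescoping yields
\[
r[p\wedge q,p]+r[p\wedge q,q]=\bigl|(r(v_n)-r(v_0))-(r(w_n)-r(w_0))\bigr|\leq \sum_{i=0}^{n-1}|\delta^v_i-\delta^w_i|\leq n.
\]

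For (2) and (3), given $u\in I(p,q)$ put $b:=p\wedge u$, $c:=q\wedge u$, and $s:=p\wedge q\wedge u$. Applying (1) to each of $d(p,q)$, $d(p,u)$, $d(u,q)$, the equation $d(p,u)+d(u,q)=d(p,q)$ reduces to $r(u)+r(p\wedge q)=r(b)+r(c)$. Inside the modular lattice ${\cal I}(u)$, the modular rank identity gives $r(b)+r(c)=r(b\vee c)+r(s)$, so
\[
r(u)-r(b\vee c)=r(p\wedge q)-r(s),
\]
with both sides nonnegative (as $b\vee c\preceq u$ and $s\preceq p\wedge q$). Hence both sides vanish, giving $s=p\wedge q$ (so $p\wedge q\preceq u$, $b\in[p\wedge q,p]$, $c\in[p\wedge q,q]$) and $u=b\vee c$; this proves (3) and the $\subseteq$ direction of (2). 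The $\supseteq$ direction is routine: for $u=b\vee c$ with $b\in[p\wedge q,p]$, $c\in[p\wedge q,q]$, one has $b\wedge c=p\wedge q$, and then modularity inside ${\cal I}(u)$ forces $p\wedge u=b$ and $q\wedge u=c$, after which a direct rank calculation via (1) yields $d(p,u)+d(u,q)=d(p,q)$.

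For (4), the subsemilattice property amounts to showing $u\wedge u'\in I(p,q)$ whenever $u,u'\in I(p,q)$; the asserted identity $u\wedge u'=(u\wedge u'\wedge p)\vee(u\wedge u'\wedge q)$ then follows immediately from (3). Writing $u=b\vee c$ and $u'=b'\vee c'$, a direct computation gives $p\wedge(u\wedge u')=b\wedge b'$ and $q\wedge(u\wedge u')=c\wedge c'$, so the task is to verify $u\wedge u'=(b\wedge b')\vee(c\wedge c')$. This is the main obstacle: unlike the lattice case, one cannot work inside a single modular lattice containing both $u$ and $u'$, because $u\vee u'$ need not exist in ${\cal L}$. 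My plan is to invoke Lemma~\ref{lem:apartment} with chains $\varPi'\subseteq[p\wedge q,p]$ running through $b,b'$ and $\varSigma'\subseteq[p\wedge q,q]$ running through $c,c'$, producing distributive sublattices ${\cal B}\subseteq{\cal I}(p)$ and ${\cal C}\subseteq{\cal I}(q)$ that contain these elements. The join ${\cal B}\vee{\cal C}$ then forms a distributive sublattice of ${\cal L}$ containing both $u$ and $u'$, in which the distributive expansion $(b\vee c)\wedge(b'\vee c')=(b\wedge b')\vee(b\wedge c')\vee(c\wedge b')\vee(c\wedge c')$ collapses to $(b\wedge b')\vee(c\wedge c')$ since the cross terms $b\wedge c'$ and $c\wedge b'$ are bounded by $p\wedge q$ and hence absorbed into the main terms.
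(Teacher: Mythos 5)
Since the paper cites Lemma~\ref{lem:I(p,q)} from \cite{HH0ext} without supplying its proof, I evaluate your proposal on its own merits.

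Parts (1)--(3) are essentially on the right track, but the key rank computation for (2)--(3) has a sign slip that breaks the inference. From $r(u)+r(p\wedge q)=r(b)+r(c)=r(b\vee c)+r(s)$ one gets $r(u)-r(b\vee c)=r(s)-r(p\wedge q)$, not $r(p\wedge q)-r(s)$. The left side is $\geq 0$ because $b\vee c\preceq u$, and the right side is $\leq 0$ because $s\preceq p\wedge q$; \emph{that} is why both must vanish. As you wrote it, with both sides declared nonnegative and then ``hence both sides vanish,'' the conclusion does not follow from the stated inequalities. A smaller point: the uniqueness statement (3) is what your $\supseteq$ paragraph proves, not your $\subseteq$ paragraph.

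The real gap is in (4). You assert that ${\cal B}\vee{\cal C}$ ``forms a distributive sublattice of ${\cal L}$'' and then expand $(b\vee c)\wedge(b'\vee c')$ distributively. Two problems. First, $b$ and $b'$ need not be comparable, so there is no single maximal chain $\varPi'\subseteq[p\wedge q,p]$ passing through both; Lemma~\ref{lem:apartment} would have to be fed $b$ via $\varPi'$ and $b'$ via $\varPi$ (and similarly $c,c'$). Second, and fatally, ${\cal B}\vee{\cal C}$ need not be a lattice at all: when $p\vee q$ does not exist, neither do many joins inside ${\cal B}\vee{\cal C}$ (consider a median semilattice of stable sets of a graph with a single edge). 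One could try to treat ${\cal B}\vee{\cal C}$ as a median \emph{semi}lattice and use quasi-distributivity, but establishing that it is a sub\emph{semi}lattice of ${\cal L}$ (meets taken in ${\cal B}\vee{\cal C}$ agree with meets in ${\cal L}$) is essentially the identity you are trying to prove, and indeed the paper's Lemma~\ref{lem:frame} invokes Lemma~\ref{lem:I(p,q)}~(2)--(3) to set up that very structure, so the plan is circular. A direct rank argument inside the modular lattice ${\cal I}(u)$ avoids the frame altogether: put $w:=u\wedge u'$, verify $w\wedge b=b\wedge b'$ and $w\wedge c=c\wedge c'$, then combine the modular rank identities $r(w)+r(b)=r(w\vee b)+r(b\wedge b')$, $r(w)+r(c)=r(w\vee c)+r(c\wedge c')$, $r(w\vee b)+r(w\vee c)=r(u)+r\bigl((w\vee b)\wedge(w\vee c)\bigr)$, $r(b)+r(c)=r(u)+r(p\wedge q)$, and $r(b\wedge b')+r(c\wedge c')=r\bigl((b\wedge b')\vee(c\wedge c')\bigr)+r(p\wedge q)$ to get $2r(w)=r\bigl((w\vee b)\wedge(w\vee c)\bigr)+r\bigl((b\wedge b')\vee(c\wedge c')\bigr)$; since $(b\wedge b')\vee(c\wedge c')\preceq w\preceq (w\vee b)\wedge(w\vee c)$, this forces $w=(b\wedge b')\vee(c\wedge c')$, and the subsemilattice claim follows from your (2)$\supseteq$.
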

As mentioned in (P2), 
every geodesic connecting $x \in K({\cal I}(p))$ and $y \in K({\cal I}(q))$ with $p \wedge q = 0$
belongs to $K(I(p,q))$. We prove this fact in Proposition~\ref{prop:P-sequence=>arch}.
In the case of median semilattice, 
one can directly derive this fact by showing that 
map $u \mapsto (p \wedge u) \vee (q \wedge u)$ 
is an order-preserving nonexpansive retraction 
(that actually corresponds to the projection in (R1)). 
However this map is not nonexpansive for general modular semilattices. 

We extend the concept of an arch in $I(p,q)$.
An {\em arch} between $p$ and $q$ 
is a sequence $(p = u_0,u_1,u_2,\ldots,u_m = q)$ in $I(p,q)$ such that
\begin{equation}\label{eqn:arch_I(p,q)}
u_i \wedge p \succ u_j \wedge p, \quad u_i \wedge q \prec u_j \wedge q \quad (0 \leq  i < j \leq m).
\end{equation}
Observe that this actually generalizes 
the definition (\ref{eqn:arch}) of an arch in 
Section~\ref{subsec:cubical}.
\begin{Lem}\label{lem:K(A)}
	Let ${\cal A} = (p = u_0,u_1,u_2,\ldots,u_m = q)$ be an arch between $p$ and $q$.
	\begin{itemize}
		\item[{\rm (1)}] $\tilde {\cal I}({\cal A}):= \bigcup_{i=0}^m {\cal I}(u_i)$ 
		is a modular subsemilattice, 
		and $K(\tilde {\cal I}({\cal A}))$ is a CAT(0) subspace of $K({\cal L})$.
		\item[{\rm (2)}] The same holds for ${\cal I}({\cal A}) := \bigcup_{i=0}^m {\cal I}(u_i) \cap I(p,q)$.
		\item[{\rm (3)}] If $p \wedge q = 0$, then there is an order-preserving nonexpansive retraction
		from ${\tilde {\cal I}}({\cal A})$ to ${\cal I}({\cal A})$.
	\end{itemize}	
\end{Lem}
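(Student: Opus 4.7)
The plan is to prove (1), (2), (3) in order. Set $a_i := u_i \wedge p$ and $c_i := u_i \wedge q$, so $u_i = a_i \vee c_i$ with $a_0 = p \succ a_1 \succ \cdots \succ a_m = p \wedge q$ and $c_0 = p \wedge q \prec c_1 \prec \cdots \prec c_m = q$ by the arch condition. Combining this with Lemma~\ref{lem:I(p,q)}(4) yields the key meet identity
\[
u_i \wedge u_j = a_{\max(i,j)} \vee c_{\min(i,j)},
\]
which controls the combinatorial structure of $\tilde{\cal I}({\cal A})$.

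For (1), closure under meet is immediate from downward-closure of each ${\cal I}(u_i)$; principal ideals of $\tilde{\cal I}({\cal A})$ coincide with those of ${\cal L}$ and hence are modular lattices; and (LFL) transfers from ${\cal L}$ because whenever $u \vee v$, $v \vee w$, $u \vee w$ all lie in $\tilde{\cal I}({\cal A})$, the three pairs sit in some ${\cal I}(u_i), {\cal I}(u_j), {\cal I}(u_k)$, and after relabeling $i \leq j \leq k$ the meet identity forces $u, v, w \preceq u_j$. For CAT(0), each $K({\cal I}(u_i))$ is CAT(0) by Theorem~\ref{thm:modularlattice}; consecutive pieces intersect in $K({\cal I}(u_i \wedge u_{i+1}))$ which is strictly-convex in both by Lemma~\ref{lem:decomposition2}; and the meet identity shows that for $i < k$ the intersection of $K({\cal I}(u_k))$ with $\bigcup_{j \leq i} K({\cal I}(u_j))$ collapses to the single subcomplex $K({\cal I}(u_k \wedge u_i))$. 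Iterated Reshetnyak gluing then assembles $K(\tilde{\cal I}({\cal A}))$ as a CAT(0) subspace of $K({\cal L})$.

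Part (2) follows by the same template. Lemma~\ref{lem:I(p,q)}(4) makes $I(p,q)$ a modular subsemilattice, so ${\cal I}({\cal A}) = \tilde{\cal I}({\cal A}) \cap I(p,q)$ inherits the structure; each ${\cal I}(u_i) \cap I(p,q) = [p \wedge q, a_i] \vee [p \wedge q, c_i]$ is the principal ideal of $u_i$ in $I(p,q)$ and hence a modular lattice, and the same iterated gluing applies.

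For (3), assume $p \wedge q = 0$, so $a_i \wedge c_i = 0$ throughout. Define on each ${\cal I}(u_i)$
\[
\phi_i(v) := \bigl( (v \vee c_i) \wedge a_i \bigr) \vee (v \wedge c_i),
\]
the Dedekind-style projection of Lemma~\ref{lem:modularlattice} onto the distributive sublattice $[0, a_i] \vee [0, c_i] = {\cal I}(u_i) \cap I(p,q)$ determined by the complemented pair $(a_i, c_i)$. Order-preservation and the retraction property on ${\cal I}(u_i) \cap I(p,q)$ are routine; nonexpansiveness on a covering pair $(v, v')$ of ${\cal I}(u_i)$ is checked by splitting into the two cases allowed by modularity with respect to $c_i$ and then with respect to $a_i$, using $a_i \wedge c_i = 0$ to guarantee that the rank never jumps by two. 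The main obstacle is the compatibility of $\phi_i$ and $\phi_{i+1}$ on the overlap ${\cal I}(u_i) \cap {\cal I}(u_{i+1}) = {\cal I}(a_{i+1} \vee c_i)$; for $v \preceq a_{i+1} \vee c_i$, repeated application of the modular law (first deriving $(a_{i+1} \vee c_i) \wedge c_{i+1} = c_i$, hence $v \wedge c_i = v \wedge c_{i+1}$, and then $(a_{i+1} \vee c_i) \wedge (v \vee c_{i+1}) = v \vee c_i$, hence $(v \vee c_{i+1}) \wedge a_{i+1} \preceq v \vee c_i$) forces $\phi_i(v) = \phi_{i+1}(v)$. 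The local pieces then glue into the required global nonexpansive order-preserving retraction $\phi: \tilde{\cal I}({\cal A}) \to {\cal I}({\cal A})$.
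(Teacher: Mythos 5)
Your (1) and (2) match the paper's route: the key fact is ${\cal I}(u_{k+1}) \cap \bigcup_{j\le k}{\cal I}(u_j) = {\cal I}(u_{k+1}\wedge u_k)$, which you extract from the meet identity $u_i\wedge u_j = a_{\max(i,j)}\vee c_{\min(i,j)}$, followed by iterated gluing of CAT(0) pieces along overlaps that are strictly-convex on both sides by Lemma~\ref{lem:decomposition2}; the paper packages this step as a gated amalgam via \cite[Prop.~7.5]{CCHO14}, you invoke Reshetnyak's gluing lemma directly, and your explicit middle-index verification of (LFL) is correct. Part (3) is where you genuinely diverge. The paper defines a single \emph{global} retraction $\varphi(v)=(\underline v\wedge p)\vee(\overline v\wedge q)$ using the maximum lower bound $\underline v$ and the minimum upper bound $\overline v$ of $v$ in ${\cal I}({\cal A})$, which on ${\cal I}(u_i)$ simplifies to $(v\wedge a_i)\vee((v\vee a_i)\wedge c_i)$; you use the dual projection $\phi_i(v)=((v\vee c_i)\wedge a_i)\vee(v\wedge c_i)$, which is not the same map in general (in a rank-$2$ modular lattice with three atoms and $a_i,c_i$ two of them, the two formulas send the third atom to $c_i$ and to $a_i$, respectively), defined piece-by-piece and glued. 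Both are valid order-preserving nonexpansive retractions onto ${\cal I}(u_i)\cap I(p,q)$; the trade-off is that the paper's global definition makes gluing-compatibility automatic, whereas your local definition requires the compatibility check, and your sketch of that step is only half complete: you show $(v\vee c_{i+1})\wedge a_{i+1}\preceq v\vee c_i$ (hence $\preceq(v\vee c_i)\wedge a_i$ since it is also $\preceq a_{i+1}\preceq a_i$), but the reverse inequality is also needed, and it follows from $(a_{i+1}\vee c_i)\wedge a_i=a_{i+1}$ (the $a$-side dual of the identity $(a_{i+1}\vee c_i)\wedge c_{i+1}=c_i$ that you do derive) together with $(v\vee c_i)\wedge a_i\preceq v\vee c_i\preceq v\vee c_{i+1}$. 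With this supplied, and with transitivity from adjacent to arbitrary overlaps (which works because ${\cal I}(u_i\wedge u_j)\subseteq{\cal I}(u_k\wedge u_{k+1})$ for all $i\le k<j$), your argument is complete and gives an alternative to the paper's construction.
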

\begin{proof}
	(1). 
	Let ${\cal I}^k := \bigcup_{i=0}^k {\cal I}(u_i)$ for $k=0,1,2,\ldots,m$.
	We show that
	\[
	{\cal I}(u_{k+1}) \cap {\cal I}^k = {\cal I}(u_{k+1} \wedge u_k).
	\]
	Observe that $(\supseteq)$ is obvious. We show the converse.
	Take $v \in {\cal I}(u_{k+1}) \cap {\cal I}^k$.
	Then $v$ belongs to ${\cal I}(u_{k+1}) \cap {\cal I}(u_j) = {\cal I}(u_{k+1} \wedge u_j)$ for some $j \leq k$.
	By Lemma~\ref{lem:I(p,q)}~(4) and (\ref{eqn:arch_I(p,q)}), 
	we have $u_{k+1} \wedge u_j = (u_{k+1} \wedge u_j \wedge p) \vee (u_{k+1} \wedge u_j \wedge q) = 
	(u_{k+1} \wedge p) \vee (u_{j} \wedge q) \preceq (u_{k+1} \wedge p) \vee (u_{k} \wedge q) = u_{k+1} \wedge u_k$.
	Hence $v$ belongs to ${\cal I}(u_{k+1} \wedge u_k)$.
	
	Suppose by induction that ${\cal I}^k$ is a modular semilattice, 
	and $K({\cal I}^k)$ is CAT(0); the base case $k=0$ follows from Theorem~\ref{thm:modularlattice} and the fact that ${\cal I}(p)$ is a modular lattice. 
	Then ${\cal I}^{k+1}$ is a {\em gated amalgam} of modular semilattices ${\cal I}^k$ and ${\cal I}(u_{k+1})$ along {\em gated sub(semi)lattice} 
	${\cal I}(u_{k+1} \wedge u_k)$ 
	in the sense of \cite[Section 7.1]{CCHO14}.
	Then ${\cal I}^{k+1}$ is also a modular semilattice, 
	and $K({\cal I}^{k+1})$ is CAT(0) by \cite[Proposition~7.5]{CCHO14}.

	(2) follows from (1) by taking ${\cal L}$ as $I(p,q)$.
	
	(3). Take an arbitrary $v \in \tilde{\cal I}({\cal A})$.
	Let $\underline v$ be the maximum element in  ${\cal I}({\cal A})$ less than or equal to $v$, which is well-defined and given by
	\[
	\underline{v} = (v \wedge p) \vee (v \wedge q).
	\]
	Indeed, if $v \succeq w \in I(p,q)$, then 
	$w = (w \wedge p) \vee (w \wedge q) \preceq (v \wedge p) \vee (v \wedge q) = \underline{v}$.
	Let $\overline v$ be the minimum element in ${\cal I}({\cal A})$ greater than or equal to $v$; it is also well-defined since
	$I(p,q)$ is a subsemilattice.
	Then $\underline v \preceq v \preceq \overline{v}$, and 
	$v \in {\cal I}({\cal A})$ if and only if $\underline v = v = \overline{v}$.
	 Define $\varphi(v)$ by 
	 \begin{equation}
	 \varphi(v) := (\underline{v} \wedge p) \vee (\overline{v} \wedge q) \in I(p,q).
	 \end{equation}
	 Then $\overline{v} \preceq u_i$ implies $\varphi(v) \preceq u_i$.
	 This gives rise to a map $\varphi: \tilde{\cal I}({\cal A}) \to {\cal I}({\cal A})$.
	 
	We show that $\varphi$ is an order-preserving nonexpansive retraction.
	\begin{figure} 
		\begin{center} 
			\includegraphics[scale=0.7]{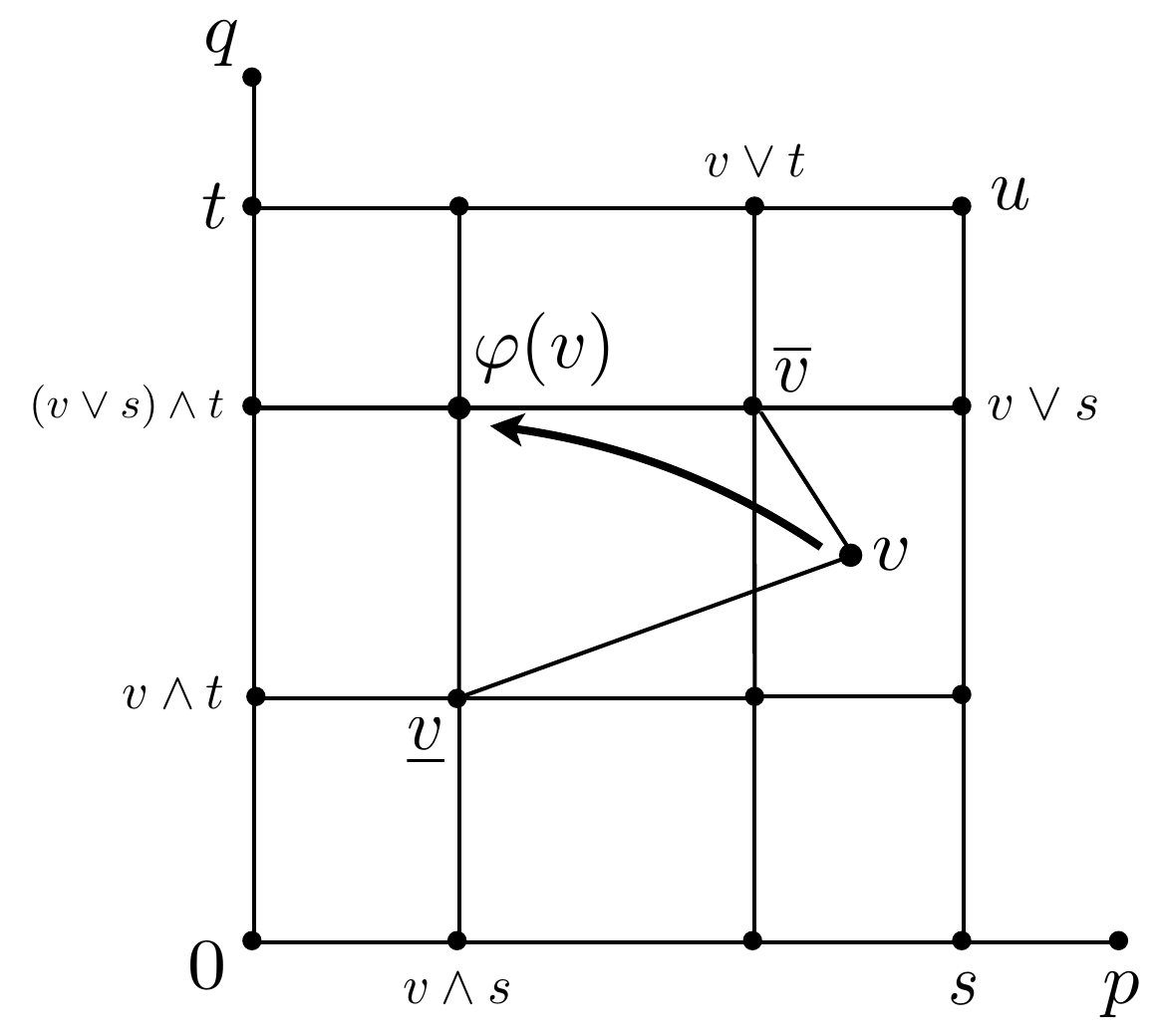}
			\caption{Construction of $\varphi$}  
					\label{fig:varphi}         
		\end{center}
	\end{figure}
    Figure~\ref{fig:varphi} 
    may be helpful to understand $\varphi$ and the following argument.  
	Here $\varphi$ is actually a retraction 
	by $\varphi(v) = (v \wedge p) \wedge (v \wedge q) = v$ if $\underline{v} = v = \overline{v}$.
	Take $u \in {\cal I}({\cal A})$ with  $v \preceq u$.
	Suppose that $u = s \vee t$ for
	$s  :=  u \wedge p$ and $t := u \wedge q$.
	Then we have 
	\begin{equation}\label{eqn:bar_v}
	\underline{v} =  (v  \wedge s) \vee (v \wedge t), \quad
	\overline{v}  = (v \vee s) \wedge (v \vee t).
	\end{equation}
	Indeed, if $w \preceq v$ and $w \in I(p,q)$, 
	then $w = (w \wedge p) \vee (w \wedge q) = (w \wedge s) \vee (w \wedge t) \preceq (v \wedge s) \vee (v \wedge t)$;
	notice $w \preceq u$ implies $w \wedge p \preceq u \wedge p = s$ and $w \wedge p = w \wedge p \wedge s = w \wedge s$. This gives the first equality.
	Dually, suppose that $v \preceq w$ and $w \in I(p,q)$.
	Then $v \preceq u \wedge w = (u \wedge w \wedge p) \vee (u \wedge w \wedge q) = (u \wedge w \wedge s) \vee (u \wedge w \wedge t) \in I(s,t)$. 
	Thus $(v \vee s) \wedge (v \vee t) \preceq 
	((u \wedge w) \vee s) \wedge ((u \wedge w) \vee t) 
	= ( s \vee (u \wedge w \wedge t)) \wedge  ((u \wedge w \wedge s) \vee t)
	= (u \wedge w \wedge s) \vee (u \wedge w \wedge t)
	= u \wedge w \preceq w$, where we use the calculation rule (Lemma~\ref{lem:I(p,q)}~(4)) in $I(s,t)$, 
	such as $(u \wedge w) \vee s = ((u \wedge w \wedge s) \vee (u \wedge w \wedge t)) \vee s = s \vee (u \wedge w \wedge t)$.  
	
	In particular, it holds
	\begin{equation}
	\underline{v} \wedge s = v \wedge s, \quad \underline{v} \wedge t = v \wedge t, 
	\quad \overline{v} \vee s = v \vee s, \quad \overline{v} \vee t = v \vee t.
	\end{equation}
	By (\ref{eqn:bar_v}), we have 
	\begin{equation}
	\overline{v} \wedge t = (v \vee s) \wedge (v \wedge t) \wedge t = (v \vee s) \wedge t.
	\end{equation}

	Take $v' \in \tilde {\cal I}({\cal A})$ that covers $v$.
	Here we can assume that $v,v' \preceq u$ (by retake $u$ if necessary).
	We show that $\varphi(v')$ covers $\varphi(v)$.
	
	Case 1: $v' \preceq \overline{v}$.
	Then $v' \vee t = v \vee t$ (by $v' \preceq \overline{v} \preceq \overline{v} \vee t = v \vee t$), and 
	$v' \vee s = v \vee s$.
	By the same argument for the proof of Lemma~\ref{lem:decomposition} (1), 
	this implies that $v' \wedge s$ covers $v \wedge s$
	and $v' \wedge t$ covers $v \wedge t$.
	Therefore $\varphi(v') = ((v' \vee s) \wedge t) \vee (v' \wedge s) = 
	((v \vee s) \wedge t) \vee (v' \wedge s)$, which covers $((v \vee s) \wedge t) \vee (v \wedge s) = \varphi(v)$.
	
	Case 2: $v' \not \preceq \overline{v}$.
	Then $v' \not \preceq \overline{v} \vee s = v \vee s$ and $v' \not \preceq \overline{v} \vee t = v \vee t$.
	This implies that $v' \vee s$ covers $v \vee s$ and $v' \vee t$ covers $v \vee t$, 
	and $v' \wedge s = v \wedge s$ and $v' \wedge t = v \wedge t$.
	Also $\overline{v'} \wedge t = (v' \vee s) \wedge t$ covers $(v \vee s) \wedge t$.
	Therefore $\varphi(v') = (v \wedge s) \vee ((v' \vee s) \wedge t)$ covers 
	$(v \wedge s) \vee ((v \vee s) \wedge t) = \varphi(v)$.	
\end{proof}

We call $K({\cal I}({\cal A}))$ the {\em path space} relative to an arch ${\cal A}$.
A geodesic in some path space is called a {\em path-space geodesic}.
The next lemma is used to reduce path-space geodesics in $K({\cal I}({\cal A}))$ to 
those in a median orthoscheme complex.
Recall Section~\ref{subsec:median} 
that $\overline{\cal D}$ is the Boolean extension of a median semilattice ${\cal D}$.
\begin{Lem}\label{lem:frame}
	Let ${\cal A} = (p = u_0,u_1,\ldots,u_m = q)$ be an arch, 
	and let $\varPi$ and $\varSigma$ be maximal chains in ${\cal I}(p)$ 
	and ${\cal I}(q)$, respectively. 
	There are distributive sublattices 
	${\cal B}$ of ${\cal I}(p)$ and  
	${\cal C}$ of ${\cal I}(q)$ satisfying the following properties:
	\begin{itemize}
		\item[{\rm (1)}]   
		${\cal B}$ contains $\varPi$ and $(p \wedge q) \wedge \varSigma$. 
		\item[{\rm (2)}] ${\cal C}$ contains $\varSigma$ and $(p \wedge q) \wedge \varPi$.
		\item[{\rm (3)}] ${\cal B} \vee {\cal C}$ contains ${\cal A}$. 
		\item[{\rm (4)}] 
	    ${\cal B} \vee {\cal C}$ is a median subsemilattice represented by a semi-bipartite PIP.
		\item[{\rm (5)}] 
		If $p \wedge q = 0$, then there is a nonexpansive order-preserving map from 
		${\cal I}({\cal A})$ to $\overline{({\cal B} \vee {\cal C})}$ 
		such that it is identity on $({\cal B} \vee {\cal C})({\cal A})  := {\cal I}({\cal A}) \cap ({\cal B} \vee {\cal C})$. 
	\end{itemize}
\end{Lem}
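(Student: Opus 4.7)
\emph{Plan overview.} My plan is to deduce (1)--(3) from Lemma~\ref{lem:apartment} by feeding it chains that cover the arch projections, verify (4) via the structure of the join-irreducibles of ${\cal B} \vee {\cal C}$, and establish (5) by gluing together local applications of Lemma~\ref{lem:modularlattice} on each principal ideal ${\cal I}(u_i)$.

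\emph{Step~1: Applying Lemma~\ref{lem:apartment}.} The arch condition~(\ref{eqn:arch_I(p,q)}) makes $p = u_0 \wedge p \succ u_1 \wedge p \succ \cdots \succ u_m \wedge p = p \wedge q$ a chain in $[p \wedge q, p]$, and dually $p \wedge q = u_0 \wedge q \prec \cdots \prec u_m \wedge q = q$ a chain in $[p \wedge q, q]$. Refine them to maximal chains $\varPi'$ and $\varSigma'$ and apply Lemma~\ref{lem:apartment} to $\varPi, \varSigma, \varPi', \varSigma'$, obtaining distributive sublattices ${\cal B} \subseteq {\cal I}(p)$ and ${\cal C} \subseteq {\cal I}(q)$ with ${\cal D}_{pq} := {\cal B} \cap {\cal C} = {\cal B} \cap {\cal I}(p \wedge q) = {\cal C} \cap {\cal I}(p \wedge q)$. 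Properties (1) and (2) are immediate. For (3), since $u_i \wedge p \in \varPi' \subseteq {\cal B}$ and $u_i \wedge q \in \varSigma' \subseteq {\cal C}$, Lemma~\ref{lem:I(p,q)}~(2) yields $u_i = (u_i \wedge p) \vee (u_i \wedge q) \in {\cal B} \vee {\cal C}$.

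\emph{Step~2: The median structure (4).} Let $Z$ denote the join-irreducibles of ${\cal D}_{pq}$, and let $B'$ (resp.\ $C'$) denote the join-irreducibles of ${\cal B}$ (resp.\ ${\cal C}$) not in ${\cal I}(p \wedge q)$. I would show that $V := Z \sqcup B' \sqcup C'$ exhausts the join-irreducibles of ${\cal B} \vee {\cal C}$, and that the associated PIP on $V$ is semi-bipartite with tripartition $(B', C', Z)$: there are no edges inside $B'$ or $C'$ since ${\cal B}, {\cal C}$ are lattices; vertices of $Z$ are isolated since elements of ${\cal I}(p \wedge q)$ are joinable with every element of ${\cal B} \vee {\cal C}$ via (LFL); and no vertex of $B' \cup C'$ lies below a vertex of $Z$ as $B' \cup C'$ is disjoint from ${\cal I}(p \wedge q)$. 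Closure of ${\cal B} \vee {\cal C}$ under meet uses the identity $(b \vee c) \wedge p = b \vee (c \wedge p \wedge q) \in {\cal B}$, valid because $c \wedge p = c \wedge p \wedge q \in {\cal C} \cap {\cal I}(p \wedge q) = {\cal D}_{pq} \subseteq {\cal B}$; distributivity of principal ideals and (LFL) then follow by modularity of ${\cal I}(p \vee q)$ and Lemma~\ref{lem:I(p,q)}~(4).

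\emph{Step~3 and the main obstacle.} For (5), assume $p \wedge q = 0$, so $V = B' \sqcup C'$ and ${\cal B} \vee {\cal C} \subseteq I(p,q)$. Order $V$ globally as $\beta_1, \ldots, \beta_{r(p)}, \gamma_1, \ldots, \gamma_{r(q)}$ so that the maximal chain of ${\cal B}$ realized by $(\beta_k)$ contains every $u_i \wedge p$, and dually for $(\gamma_l)$ and every $u_i \wedge q$; this is possible because $\varPi' \subseteq {\cal B}$ and $\varSigma' \subseteq {\cal C}$ are already maximal chains of the respective lattices. For each $u_i$, the restriction of the global order to $V_i := \{\beta_k : \beta_k \preceq u_i \wedge p\} \cup \{\gamma_l : \gamma_l \preceq u_i \wedge q\}$ yields a maximal chain in ${\cal I}(u_i)$ that first climbs to $u_i \wedge p$ along $\beta$'s and then to $u_i$ along $\gamma$'s; Lemma~\ref{lem:modularlattice} then supplies a nonexpansive order-preserving map $\phi_i: {\cal I}(u_i) \to 2^{V_i} \subseteq \overline{({\cal B} \vee {\cal C})}$ that fixes $({\cal B} \vee {\cal C}) \cap {\cal I}(u_i)$. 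The main obstacle is showing the $\phi_i$'s agree on overlaps so that they glue to a global map. Arch monotonicity and Lemma~\ref{lem:I(p,q)}~(4) give $u_i \wedge u_j = (u_j \wedge p) \vee (u_i \wedge q)$ for $i<j$; a direct modular-lattice computation using $p \wedge q = 0$ shows that for $v \in {\cal I}(u_i \wedge u_j)$ both $v \wedge p \preceq u_j \wedge p$ and $v \wedge q \preceq u_i \wedge q$, so the partial-sum increments contributing to $\phi_i(v)$ under the $V_i$-chain coincide with those contributing to $\phi_j(v)$ under the $V_j$-chain, both equal to the value $\phi_{ij}(v)$ produced by the $V_{ij}$-chain. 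Gluing the $\phi_i$ and restricting to ${\cal I}({\cal A}) \subseteq \tilde{\cal I}({\cal A})$ then delivers the required map.
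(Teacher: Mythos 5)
Your steps (1)--(3) coincide with the paper's: apply Lemma~\ref{lem:apartment} to $\varPi,\varSigma$ together with the arch-derived chains $\varPi'=(u_m\wedge p\prec\cdots\prec u_0\wedge p)$ and $\varSigma'=(u_0\wedge q\prec\cdots\prec u_m\wedge q)$, and read off (3) from $u_i=(u_i\wedge p)\vee(u_i\wedge q)\in\varPi'\vee\varSigma'$. For (4) and (5), however, you take a genuinely different route. The paper handles (4) by simply defining the PIP $G_\preceq$ on $B\cup C$ (order inherited from ${\cal L}$, edges = unbounded pairs) and exhibiting a bijection ${\cal B}\vee{\cal C}\cong{\cal S}(G_\preceq)$; the median property then comes for free from the $\text{PIP}\mapsto{\cal S}(G_\preceq)$ correspondence. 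You instead try to verify meet-closure, distributivity of principal ideals, and (LFL) directly; this can be made to work (and your meet-closure identity $(b\vee c)\wedge p=b\vee(c\wedge p\wedge q)$ is a correct modular-lattice computation inside ${\cal I}(b\vee c)$), but your phrase ``modularity of ${\cal I}(p\vee q)$'' is a slip --- $p\vee q$ need not exist, so the argument has to live inside ${\cal I}(u)$ for appropriate $u$. For (5), the paper's construction is considerably slicker: since $p\wedge q=0$ and ${\cal I}({\cal A})\subseteq I(p,q)$, Lemma~\ref{lem:I(p,q)} gives a unique factorization $u=s\vee t$ with $s\in{\cal I}(p)$, $t\in{\cal I}(q)$, and one simply sets $\phi(u):=\phi_B(s)\cup\phi_C(t)$ using two global maps $\phi_B,\phi_C$ from Lemma~\ref{lem:modularlattice}; nonexpansiveness and the fixed-point property are then inherited directly, and the containment $\phi(u)\in\overline{{\cal B}\vee{\cal C}}$ follows from $\phi_B(s)\subseteq\phi_B(u_i\wedge p)$ and $\phi_C(t)\subseteq\phi_C(u_i\wedge q)$. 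Your gluing of local maps $\phi_i:{\cal I}(u_i)\to 2^{V_i}$ ultimately produces the same map on ${\cal I}({\cal A})$ and the overlap compatibility is provable --- one shows, via Lemma~\ref{lem:I(p,q)}~(4), that the chain-prefix meets $v\wedge w_l^i$ and $v\wedge w_l^j$ coincide for $v\in{\cal I}(u_i\wedge u_j)$ --- but you state this as ``partial-sum increments coincide'' without carrying out the computation, and the gluing (plus checking nonexpansiveness across pieces) is an extra layer of bookkeeping the factorization approach avoids entirely. Both routes are valid; the paper's buys brevity and robustness, while yours works on the larger complex $\tilde{\cal I}({\cal A})=\bigcup_i{\cal I}(u_i)$ rather than just ${\cal I}({\cal A})$, which is more than the lemma requires.
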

\begin{proof}
	Consider chains $\varPi' := 
	(p \wedge q = u_m \wedge p \prec u_{m-1} \wedge p \prec \cdots \prec u_0 \wedge p = p)$
	and $\varSigma' := 
	(p \wedge q = u_0 \wedge q \prec u_{1} \wedge q \prec \cdots \prec u_m \wedge q = q)$.
	By Lemma~\ref{lem:apartment}, 
	we can take distributive sublattices ${\cal B}$ of ${\cal I}(p)$ 
	and ${\cal C}$ of ${\cal I}(q)$ 
	such that ${\cal B}$ contains $\varPi$, $\varPi'$, $(p \wedge q) \wedge \varSigma$, and
	${\cal C}$ contains $\varSigma$, $\varSigma'$, $(p \wedge q) \wedge \varPi$.
	Then ${\cal A} \subseteq \varPi' \vee \varSigma' \subseteq {\cal B} \vee {\cal C}$.

	A semi-bipartite PIP representing ${\cal B} \vee {\cal C}$ is constructed as follows.
	Let $B$ and $C$ be the sets of 
	join-irreducible elements of distributive lattices ${\cal B}$ and ${\cal C}$, respectively.
	By property (3) in Lemma~\ref{lem:apartment},
	$B \cap C$ is the set of join-irreducible elements in ${\cal B}　\cap {\cal C} = {\cal I}(p \wedge q)$.
	Define PIP $G_{\preceq}$ on vertex set $B \cup C$, 
	where the partial order is the restriction of ${\cal L}$ 
	and an edge is given to each unbounded pair.  
	Then it is easy to verify that ${\cal B} \vee {\cal C}$ is isomorphic to ${\cal S}(G_{\preceq})$:
	For $p \in {\cal B} \vee {\cal C}$, consider the set of join-irreducible elements $a \preceq p$, 
	which is a stable ideal in $G_{\preceq}$. Conversely, for a stable ideal, 
	consider the join of all elements in the stable ideal, 
	which exists by (LFL) and belongs to ${\cal B} \vee {\cal C}$.
	Observe that $G_{\preceq}$ is a semi-bipartite PIP with tri-partition 
	$B',C',Z$, where $B' = B \setminus Z$ and $C' = C \setminus Z$ 
	for some $Z \supseteq B\cap C$.

	Next we define a nonexpansive order-preserving map 
	$\phi: {\cal I}({\cal A}) \to \overline{({\cal B} \vee {\cal C})}$ to show~(5).
	Suppose that $B = \{b_1,b_2,\ldots,b_k\}$ and $C = \{c_1,c_2,\ldots,c_l\}$, where $B \cap C = \emptyset$ 
	by $p \wedge q = 0$.
	Consider a maximal chain of ${\cal I}(p)$ containing $u_i \wedge p$ $(i=0,1,\ldots,m)$.
	We can assume that this chain is given by $b_1 \vee \cdots \vee b_i$ $(i=0,1,\ldots,k)$.
	Similarly we can assume that $c_1 \vee \cdots \vee c_i$ $(i=0,1,\ldots,l)$
	is a maximal chain containing $u_i \wedge q$ $(i=0,1,\ldots,m)$.
    Define map $\phi_B:{\cal I}(p) \to \overline{\cal B} = 2^{B}$ 
    by (\ref{eqn:map}) for the chain $b_1 \vee \cdots \vee b_i$. 
    By Lemma~\ref{lem:modularlattice}, 
    $\phi_B$ is a nonexpansive order-preserving map fixing ${\cal B}$.
    Define $\phi_C:{\cal I}(q) \to \overline{\cal C}$ for the chain $c_1 \vee \cdots \vee c_i$ similarly.
    Now define map $\phi:{\cal I}({\cal A}) \to \overline{({\cal B} \vee {\cal C})}$ by
	\[
	\phi(u) := \phi_{B}(s) \cup \phi_{C}(t) \quad (u = s \vee t \in {\cal I}({\cal A}):s \in {\cal I}(p), t \in {\cal I}(q)).
	\]
	Notice from Lemma~\ref{lem:I(p,q)}~(2) and (3) 
	that the expression $u= s \vee t$ is possible and unique.
	We need to verify that $\phi_{B}(s) \cup \phi_{C}(t)$ belongs to 
	$\overline{({\cal B} \vee {\cal C})}$.
	Indeed, from $s \vee t \preceq u_i$ for some $i$, it holds 
	that $s \preceq u_i \wedge p$ and $t \preceq u_i \wedge q$.
	Since the above chain contains $u_i \wedge p$, we have 
	$\phi_B(s) \subseteq \phi_B(u_i \wedge p)$. 
	Similarly we have
	$\phi_C(t) \subseteq \phi_C(u_i \wedge q)$.
	This means that $\phi_B(s) \cup \phi_C(t)$ 
	consists of join-irreducible elements in ${\cal I}(u_i)$,   
	and belongs to $\overline{({\cal B} \vee {\cal C})}$ (since there is no edge among them).
	By construction, $\phi$ fixes $({\cal B} \vee {\cal C})({\cal A})$.
	The nonexpansiveness of $\phi$ 
	follows from that of $\phi_B$ and $\phi_C$ (Lemma~\ref{lem:modularlattice}).
\end{proof}
We call the above ${\cal B} \vee {\cal C}$ a {\em distributive frame}.

\subsection{Geodesics}\label{subsec:geodesics}
The goal of this subsection is 
to prove the unique geodesic property (Theorem~\ref{thm:unique_geodesic}).
For $x \in K({\cal L})$, denote by $\tau(x)$ the maximum element in 
$\supp x$, i.e., if $x = \sum_{i=0}^k \lambda_i p_i$ with $\lambda_k \neq 0$ 
then $\tau (x) = p_k$.
Equivalently, $\tau(x)$ is the minimum element $u$ 
such that $K({\cal I}(u))$ contains $x$.

We start with a general property of geodesics in $K({\cal L})$.
\begin{Prop}\label{prop:P-sequence}
	Let $P$ be a geodesic in $K({\cal L})$.
    Then there are $0= t_0 < t_1 < \cdots < t_{m+1} = 1$ $(m \geq 0)$ 
    and $u_0,u_1,\ldots,u_m \in {\cal L}$ satisfying the following: 
	\begin{itemize}
		\item[{\rm (1)}] $\tau(P(t)) = u_i$ for $t_{i} < t < t_{i+1}$ and $0 \leq i \leq m$.
		\item[{\rm (2)}] $\tau(P(0)) \preceq u_0$, $\tau(P(1)) \preceq u_m$, 
		and $\tau(P(t_i)) \preceq u_{i-1} \wedge u_{i}$ for $1 \leq i \leq m$.
		\item[{\rm (3)}] The join of $u_{i-1}$ and $u_{i}$ does not exist for $1 \leq i \leq m$. 
	\end{itemize}
\end{Prop}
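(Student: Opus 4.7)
The plan is to subdivide $[0,1]$ according to the jumps of $\tau\circ P$ and then verify (1)--(3) in order, with (3) being the substantive step.

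First I would exploit that $K({\cal L})$ is an $M_0$-simplicial complex with finitely many isometry types, so $P$ is polygonal: there is a finite subdivision $0 = s_0 < s_1 < \cdots < s_N = 1$ such that each $P|_{[s_j,s_{j+1}]}$ is a straight segment inside a single closed simplex. Within each open piece $(s_j,s_{j+1})$ the support of $P(t)$ equals the union of the supports at the two endpoints of that segment, hence is constant, and so $\tau(P(t))$ is constant there. Merging adjacent pieces on which this constant agrees yields the required subdivision $0=t_0<\cdots<t_{m+1}=1$ and elements $u_0,\ldots,u_m$ (consecutive values distinct by maximality), which gives~(1). Continuity of $P$ then forces $P(t_i)$ to lie in both the closed simplex with top $u_{i-1}$ (approached from the left) and the closed simplex with top $u_i$ (from the right); its support is therefore contained in both of those chains, so $\tau(P(t_i))\preceq u_{i-1}\wedge u_i$, and the endpoint bounds $\tau(P(0))\preceq u_0,\ \tau(P(1))\preceq u_m$ follow by the same reasoning. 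This establishes~(2).

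The main obstacle is (3), which I would prove by contradiction. Assume that $w:=u_{i-1}\vee u_i$ exists for some $i\in\{1,\ldots,m\}$. Pick $\epsilon>0$ small enough that $[t_i-\epsilon,t_i+\epsilon]\subseteq(t_{i-1},t_{i+1})$ and set $y:=P(t_i-\epsilon)$, $z:=P(t_i+\epsilon)$. Then $P$ restricted to this interval stays in $K({\cal I}(u_{i-1}))\cup K({\cal I}(u_i))\subseteq K({\cal I}(w))$. By Lemma~\ref{lem:decomposition2} the subspace $K({\cal I}(w))$ is strictly-convex in $K({\cal L})$, and by Theorem~\ref{thm:modularlattice} it is CAT(0); hence the restriction of $P$ is the unique geodesic in $K({\cal I}(w))$ between $y$ and $z$. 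Invoking the classical Dedekind--Birkhoff theorem for modular lattices, I would choose a distributive sublattice ${\cal D}\subseteq{\cal I}(w)$ containing both chains $\supp y$ and $\supp z$; by Lemma~\ref{lem:modularlattice}, $K({\cal D})$ is strictly-convex in $K({\cal I}(w))$, and so the restricted geodesic in fact lies in $K({\cal D})$.

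The decisive final move is to argue inside the order polytope $K({\cal D})$ (Example~\ref{ex:distributivelattice}): the geodesic from $y$ to $z$ there is a Euclidean straight segment, and passing to b-coordinates shows that on the interior of this segment the b-coordinate support equals the union of the b-coordinate supports of $y$ and $z$, whose join in ${\cal D}$ (equivalently in ${\cal L}$) is exactly $w$. Hence $\tau(P(t))=w$ on the open segment. Since $u_{i-1}\neq u_i$ cannot both equal $w$, this contradicts $\tau(P(t))\in\{u_{i-1},u_i\}$ on $(t_i-\epsilon,t_i+\epsilon)\setminus\{t_i\}$, establishing~(3).
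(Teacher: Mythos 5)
Your proof is correct and takes essentially the same route as the paper: after localizing near a breakpoint, you reduce to a distributive sublattice via Dedekind--Birkhoff, use that the geodesic is a Euclidean segment in the order polytope (so interior points have $\rm b$-coordinate support equal to the union of the endpoints' supports), and conclude $\tau(P(t))$ would equal the join, yielding a contradiction. The only difference is organizational: you extract piecewise-constancy of $\tau\circ P$ from polygonality first, which folds the paper's separate exclusion of the one-sided jump $u^-=u\prec u^+$ and of bounded dips $u^-\succ u\prec u^+$ into one uniform step, but the underlying argument is the same.
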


\begin{proof}
	Let $t \in (0,1)$. Choose a sufficiently small $\epsilon > 0$. 
	Then $P(t - \epsilon)$ and $P(t)$ belong a common simplex of chain $C$ and 
	$P(t)$ and $P(t + \epsilon)$ belong to a common simplex of chain $C'$.
	Let $x := P(t)$, $x^{-} := P(t - \epsilon)$, and $x^{+} := P(t + \epsilon)$.
	Let $u := \tau (x)$, $u^{-} := \tau (x^{-})$, 
	and $u^{+} := \tau (x^{+})$.
	Necessarily $u^-$ and $u$ are comparable, and $u^+$ and $u$ are comparable.
	Now $\epsilon > 0$ is small.
	By continuity of $P$, both $u^+ \prec u$ and $u^- \prec u$ are impossible.  
	(By this argument, 
	we also see that $\tau (P(0)) \preceq \tau (P(\epsilon))$ and $\tau (P(1)) \preceq \tau (P(1-\epsilon))$.) 

	We next show that $u^- = u \prec u^+$ cannot occur.
	Suppose to the contrary that $u^- = u \prec u^+$ holds.
	Then the chains $C$ and $C'$ belong to a modular lattice ${\cal I}(u^+)$. 
	According to the Dedekind--Birkhoff theorem (or Lemma~\ref{lem:apartment}), 
	there is a distributive sublattice ${\cal B}$ of ${\cal I}(u^+)$ 
	such that $K({\cal B})$ contains $C,C'$.
	Suppose that $x^-,x, x^+$ are represented in the b-coordinate as
	\begin{equation}\label{eqn:x^-}
	x^- = \sum_{i} \mu_i^- b_i,\  x = \sum_{i} \mu_i b_i, \ x^+ = \sum_{i} \mu_i^+ b_i,
	\end{equation}
	where $b_i$ are join-irreducible elements of ${\cal B}$.
	By $u^- = u \prec u^+$, 
	at least one of $\mu_i$ is zero, and all of $\mu_i^+$ are nonzero.
	However $x$ is the midpoint of $x^-$ and $x^+$ in convex polytope $K({\cal B}) \subseteq [0,1]^n$.
	Then $\mu_i = (\mu_i^- + \mu_i^+)/2$ must hold. This is a contradiction.
	Similarly, $u^- \prec u = u^+$ is also impossible.
	Thus $u^- = u = u^+$ or $u^- \succ u \prec u^+$ holds.

	Suppose that $u^- \succ u \prec u^+$ holds.
	We show that the join of $u^-$ and $u^+$
	does not exist.
	Suppose not.
	Then the chains $C$ and $C'$ belong to a modular lattice 
	${\cal I}(u^- \vee u^+)$.
	Choose a distributive sublattice ${\cal B}$ of ${\cal I}(u^+ \vee u^-)$ 
	such that $K({\cal  B})$ contains $C,C'$.
	Represent $x^-,x, x^+$ in the b-coordinate as (\ref{eqn:x^-}).
	By $u^- \succ u \prec u^+$,  
	it must hold $\{i \mid \mu^-_i > 0\} \supset \{i \mid \mu_i > 0\} \subset \{i \mid \mu_i^+ > 0\}$.
	However this is a contradiction since $\mu_i = (\mu_i^- + \mu_i^+)/2$ must hold.

	Since any geodesic meets a finite number of simplices, 
	we conclude the existence of $u_i$ and $t_i$ with (1-3).	
\end{proof}
The sequence $(u_0,u_1,\ldots,u_m)$ determined by a geodesic $P$ is called the {\em $P$-sequence}.

We say that $x,y \in K({\cal L})$ are {\em orthogonal}
if for $p = \tau(x)$ and $q= \tau(y)$ 
it hold $\omega_q(p)= \omega_p(q) = p \wedge q = 0$.
From now, 
let us fix an orthogonal pair $x,y \in K({\cal L})$.
Suppose that $p = \tau (x)$ and $q = \tau (y)$.
We study geodesics connecting $x,y$.
The following shows that any geodesic connecting $x,y$ 
must be a path-space geodesic, which establishes~(R3).
\begin{Prop}\label{prop:P-sequence=>arch}
	\begin{itemize}
		\item[{\rm (1)}] For a geodesic $P$ 
		connecting $x$ and $y$, 
		the $P$-sequence is an arch~${\cal A}$ for $I(p,q)$; in particular, $P$ belongs to $K(\tilde {\cal I}({\cal A}))$ and $K({\cal I}({\cal A}))$.
		\item[{\rm (2)}] For two geodesics $P,P'$ connecting $x$ and $y$, 
		if the $P$-sequence and the $P'$-sequence are equal, 
		then $P$ and $P'$ are equal.
	\end{itemize}
\end{Prop}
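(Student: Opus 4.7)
The plan is to first prove Part (1) by showing that the $P$-sequence $(u_0,\ldots,u_m)$ supplied by Proposition~\ref{prop:P-sequence} satisfies the three defining properties of an arch between $p$ and $q$ in $I(p,q)$, and then to derive Part (2) from the CAT(0) structure of the path space. The three requirements are the endpoint conditions $u_0 = p$ and $u_m = q$, the membership $u_i \in I(p,q)$ (equivalently $u_i = (u_i \wedge p) \vee (u_i \wedge q)$ by Lemma~\ref{lem:I(p,q)}(3)), and the strict monotonicities $u_i \wedge p \succ u_j \wedge p$, $u_i \wedge q \prec u_j \wedge q$ for $i<j$.

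All three properties will be proved by a common surgery template. By Proposition~\ref{prop:P-sequence}, each sub-path $P|_{[t_i, t_{i+1}]}$ lies in the CAT(0) modular-lattice subspace $K({\cal I}(u_i))$, which is strictly-convex by Lemma~\ref{lem:decomposition2}. Inside $K({\cal I}(u_i))$ the meet-retractions $p \wedge \cdot$, $q \wedge \cdot$, and their composites are order-preserving and nonexpansive by Lemma~\ref{lem:decomposition}. For the endpoint claim, if $u_0 \succ p$ strictly, I would apply $p \wedge \cdot$ on ${\cal I}(u_0)$ to $P|_{[0, t_1]}$: this fixes $x$, and since $P(t)$ has a positive coefficient at $u_0 \succ p$ for $t \in (0, t_1)$ whereas $x$ does not, Lemma~\ref{lem:strictly_decrease}(2) yields a strict length decrease, producing an $(x, y)$-path shorter than $P$ after splicing back the tail, a contradiction. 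For membership, if $u_i$ properly dominates $(u_i \wedge p) \vee (u_i \wedge q)$, the projection $v \mapsto (v \wedge p) \vee (v \wedge q)$ on $K({\cal I}(u_i))$ again shortens $P|_{[t_i, t_{i+1}]}$ by Lemma~\ref{lem:strictly_decrease}(2), and the orthogonality $p \wedge q = 0$ guarantees that both $x$ and $y$ are fixed. For monotonicity, supposing $u_{i+1} \wedge p \not\prec u_i \wedge p$ would contradict the non-join condition of Proposition~\ref{prop:P-sequence}(3) combined with the now-established membership in $I(p,q)$, since $u_i \vee u_{i+1}$ would then be realizable as the join of the $p$-parts and the $q$-parts. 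Once ${\cal A}$ is seen to be an arch, $P$ lies in $K(\tilde{\cal I}({\cal A}))$ by the very definition of the $P$-sequence, and the retraction of Lemma~\ref{lem:K(A)}(3) places $P$ inside $K({\cal I}({\cal A}))$.

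Part (2) is then immediate: by Part (1) both $P$ and $P'$ lie in $K(\tilde{\cal I}({\cal A}))$, which is CAT(0) by Lemma~\ref{lem:K(A)}(1) and hence uniquely geodesic, so $P = P'$.

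The main obstacle is executing the surgery cleanly: each retraction has to be nonexpansive on a suitably chosen ambient sublattice, must fix the endpoints $x$ and $y$ after a splice across a simplex boundary, and must satisfy the coefficient hypothesis of Lemma~\ref{lem:strictly_decrease}(2). The orthogonality $\omega_p(q) = \omega_q(p) = p \wedge q = 0$ is what allows the joint projection $v \mapsto (v \wedge p) \vee (v \wedge q)$ to preserve both endpoints simultaneously, mirroring the role of bipartiteness in the cubical argument~(R3).
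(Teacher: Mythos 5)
The surgery template breaks down at its core step. The projection you use for membership, $v \mapsto (v \wedge p) \vee (v \wedge q)$, is \emph{not} nonexpansive on modular (semi)lattices: the paper flags exactly this in the discussion just after Lemma~\ref{lem:I(p,q)} (``this map is not nonexpansive for general modular semilattices''), and it fails even inside a single modular lattice. In the rank-$3$ subspace lattice of a projective plane, take two points $a,b$ and a line $v$ passing through neither; then $v$ is covered by the whole plane $v'$, but $(v \wedge a) \vee (v \wedge b) = 0$ while $(v' \wedge a) \vee (v' \wedge b) = a \vee b$ has rank $2$. So restricting to $K({\cal I}(u_i))$ does not help, Lemma~\ref{lem:strictly_decrease} is inapplicable, and the shortening argument collapses. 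There are two further gaps: the piecewise surgery on $[0,t_1]$ has no valid splice, since $p \wedge P(t_1)$ need not equal $P(t_1)$ (one only knows $\tau(P(t_1)) \preceq u_0 \wedge u_1$, not $u_0 \wedge u_1 \preceq p$), so the modified path is not an $(x,y)$-path; and the monotonicity step does not follow from membership in $I(p,q)$ plus the non-join condition, because the latter only precludes comparability of $u_i$ and $u_{i+1}$ --- the elements $u_i \wedge p$ and $u_{i+1} \wedge p$ could a priori be incomparable rather than nested, and (LFL) only tells you \emph{some} pairwise join among the $p$- and $q$-parts fails.

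The paper circumvents all three issues by avoiding piecewise surgery. It iterates meets from \emph{both} ends to build nested chains $p = p_0 \succeq p_1 \succeq \cdots \succeq p_m = 0$ (via $p_i := p_{i-1} \wedge u_i$) and dually $q_0 \preceq \cdots \preceq q_m = q$, so the nesting is guaranteed by construction rather than derived; it then sets $\bar u_i := p_i \vee q_i \in I(p,q)$ and proves the algebraic identity $\bar u_{i-1} \wedge u_i = u_{i-1} \wedge \bar u_i$, which is precisely what makes the \emph{global} piecewise-defined map $t \mapsto \bar u_i \wedge P(t)$ on $[t_i,t_{i+1}]$ a well-defined continuous $(x,y)$-path. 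Each $\bar u_i \wedge \cdot$ \emph{is} nonexpansive (Lemma~\ref{lem:decomposition2}), so the strict-decrease argument of Lemma~\ref{lem:decomposition}(2) applies and forces $u_i = \bar u_i$. Strict monotonicity is then immediate from the a priori nesting together with the non-join hypothesis. Your Part (2) matches the paper; your Part (1) needs the $\bar u_i$ construction and cannot be obtained by local projections alone.
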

\begin{proof}
	(1). Let $(u_0,u_1,\ldots,u_m)$ be the $P$-sequence.
	Define $p_i$ $(i=0,1,\ldots,m)$ by $p_0 := p \wedge u_0$ and
		\begin{equation*}
		p_{i} := p_{i-1} \wedge u_i \wedge p = p_{i-1} \wedge u_i \quad (i=1,2,\ldots,m). 
		\end{equation*}
		Then $p = p_0 \succeq p_1 \succeq \cdots \succeq p_m = 0$, 
		where $p = p_0$ follows from Proposition~\ref{prop:P-sequence}~(2) and
		$p_m = 0$ follows from the orthogonality and $q \preceq u_m$ (Proposition~\ref{prop:P-sequence}~(2)). 
	Similarly, define $q_i$ $(i=0,1,\ldots,m)$ by $q_m := q \wedge u_m$ and
	\begin{equation*}
	q_{i} := q_{i+1} \wedge u_i \wedge q = q_{i+1} \wedge u_i \quad (i=1,2,\ldots,m).  
	\end{equation*}
	Then $q = q_m \succeq q_{m-1} \succeq \cdots \succeq q_0 = 0$.
	Observe that $p_i$ and $q_i$ have upper bound $u_i$ and have join.
	For $i=0,1,2,\ldots,m$, 
	let $\bar u_i := p_i \vee q_i$, which belongs to $I(p,q)$ (Lemma~\ref{lem:I(p,q)}~(2)).
	See Figure~\ref{fig:P-sequence} 
	for construction of $\bar u_0, \bar u_1,\ldots, \bar u_m$.
	\begin{figure} 
		\begin{center} 
			\includegraphics[scale=0.7]{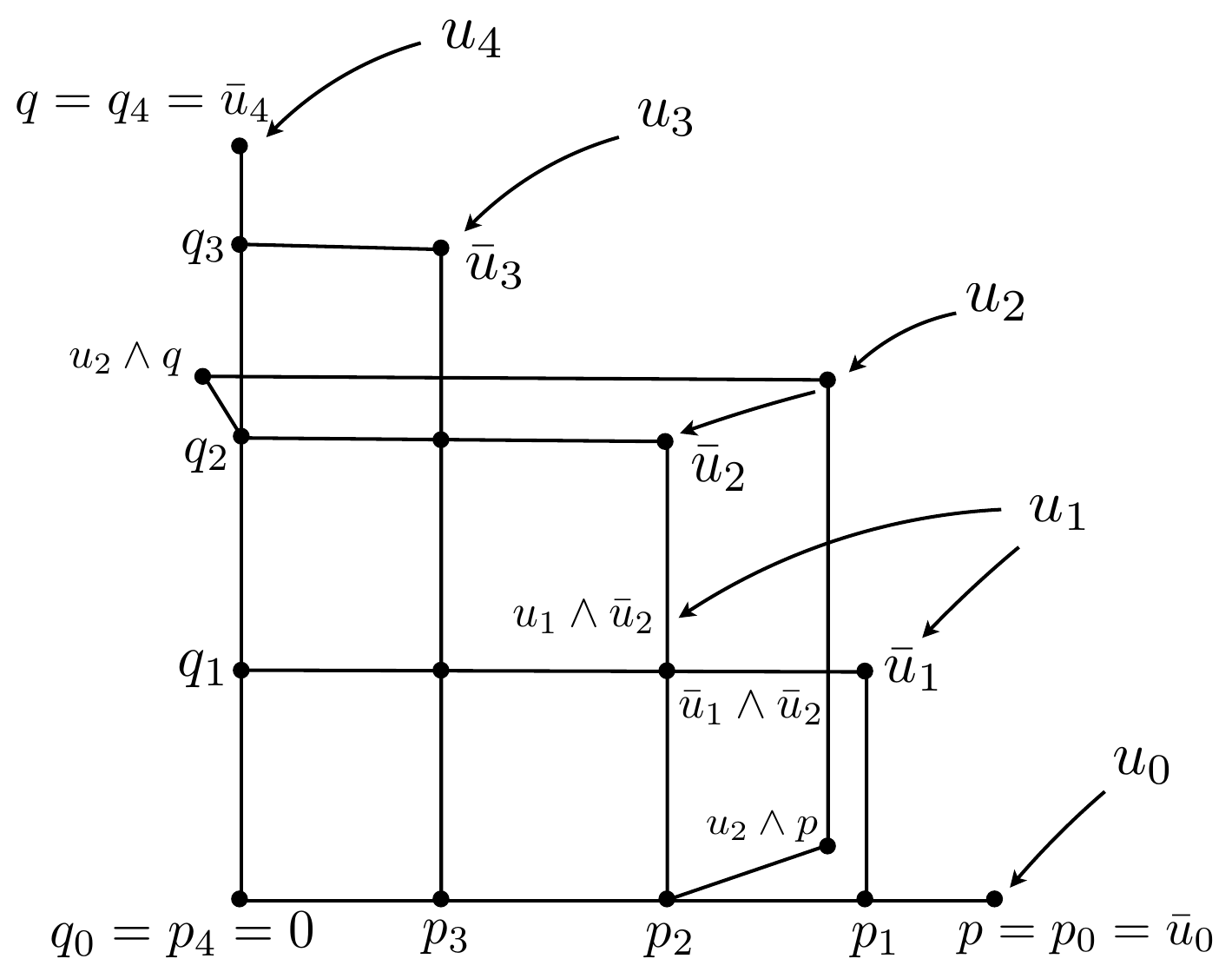}
			\caption{Construction of an arch}  
			\label{fig:P-sequence}         
		\end{center}
	\end{figure}
	
	Let $0 = t_0 < t_1 < \cdots < t_{m+1} =1$ be 
	the moments for which $\tau(P(t_i)) \preceq u_{i-1} \wedge u_i$ for $i=1,2,\ldots, m+1$.
	We are going to show that $P':[0,1] \to K(\tilde{\cal I}({\cal A}))$ defined by
	\begin{equation*}
	P'(t) := 
	\bar u_i \wedge P(t) \quad (t \in [t_i,t_{i+1}], i=0,1,\ldots,m)
	\end{equation*}
	is a well-defined $(x,y)$-path in $K(\tilde{\cal I}({\cal A}))$.
	Notice from $\bar u_0 = p$ and $\bar u_m = q$ 
	that $P'(0) = x$ and $P'(1) = y$. 	
	We have to show that
	\begin{equation}\label{eqn:1}
	\bar u_{i-1} \wedge P(t_i) = \bar u_{i} \wedge P(t_{i}) \quad (i=1,2,\ldots,m).
	\end{equation} 
	We show 
	\begin{equation}\label{eqn:2}
	\bar u_{i-1} \wedge u_i = u_{i-1} \wedge \bar u_i \quad (i=1,2,\ldots,m).
	\end{equation}
	Since $\tau(P(t_i)) \preceq u_{i-1} \wedge u_i$,
	if (\ref{eqn:2}) is true, 
	then $\bar u_{i-1} \wedge P(t_i) = \bar u_{i-1} \wedge u_i \wedge P(t_i) 
	= u_{i-1} \wedge \bar u_i \wedge P(t_i) =  \bar u_i \wedge P(t_i)$, which implies (\ref{eqn:1}).
	
	By $\bar u_{i-1} \preceq u_{i-1}$, 
	it obviously holds $\bar u_{i-1} \wedge \bar u_{i} \preceq u_{i-1} \wedge \bar u_i \preceq \bar u_{i}$.
	Observe from $p_{i} \preceq p_{i-1}$, $q_{i-1} \preceq q_i$ and Lemma~\ref{lem:I(p,q)} that
	$d_{\cal L}(p,q) = d_{\cal L}(p, \bar u_{i-1} \wedge \bar u_i) 
	+ d_{\cal L}(\bar u_{i-1} \wedge \bar u_i, \bar u_{i}) + d_{\cal L}(\bar u_{i}, q)$.
	Consequently, it holds
	$\bar u_{i} \wedge u_{i-1} \in [\bar u_{i-1} \wedge \bar u_{i}, \bar u_{i}] \subseteq I(p,q)$.
	Thus we have
	\begin{eqnarray*}
	\bar u_{i} \wedge u_{i-1} & = & (\bar u_{i} \wedge u_{i-1} \wedge p) \vee (\bar u_{i} \wedge u_{i-1} \wedge q) \\
	&= & ( (p_i \vee q_i) \wedge (u_{i-1} \wedge p)) \vee ( (p_i \vee q_i) \wedge (u_{i-1} \wedge q)) \\
	& = & p_{i} \vee q_{i-1} = \bar u_{i-1} \wedge \bar u_{i}, 
	\end{eqnarray*}
	where we use the calculation rule in Lemma~\ref{lem:I(p,q)}~(4) with $p_i \preceq p_{i-1} \preceq u_{i-1} \wedge p$ and $q_{i-1} = q_{i} \wedge u_{i-1}$.
	Similarly, from $u_{i} \wedge \bar u_{i-1} \in [\bar u_i \wedge \bar u_{i-1}, \bar u_{i-1}] \subseteq I(p,q)$, 
	we obtain $u_{i} \wedge \bar u_{i-1} = \bar u_{i-1} \wedge \bar u_{i}$, 
	and hence (\ref{eqn:2}).

	For each $i$
	the map $\bar u_i \wedge$ is nonexpansive (Lemma~\ref{lem:decomposition2}).
	Then it must hold $u_i = \bar u_i$ for $i=0,1,2,\ldots,m$.
	Indeed, if $P$ leaves $K({\cal I}(\bar u_i))$ at $t_i$, 
	then one can deduce, by precisely the same argument in the proof of Lemma~\ref{lem:decomposition}~(2), 
	contradiction $d(P') < d(P)$.
	Moreover, 
	by Proposition~\ref{prop:P-sequence}~(3), it must hold 
	$u_i \wedge p = p_i \succ p_{i+1} = u_{i+1} \wedge p$ and
	$u_i \wedge q = q_i \prec q_{i+1} = u_{i+1} \wedge q$ for $i=0,1,2,\ldots,m-1$.
	Thus $(p= u_0,u_1,\ldots,u_m = q)$ is an arch.
	
	By Lemma~\ref{lem:K(A)}~(1-3),
	$K({\cal I}({\cal A}))$ is a strictly-convex subspace of $K(\tilde{\cal I}({\cal A}))$, and hence
	$P$ must belongs to $K({\cal I}({\cal A}))$.
	
	(2). The path space is CAT(0) (Lemma~\ref{lem:K(A)}) and is uniquely geodesic. 
	Since $P,P'$ belong to the same path space, 
	it must hold $P= P'$. 
\end{proof}

By (2), our problem reduces to find an arch 
for which the corresponding path-space geodesic is shortest. 
We give an explicit formula 
of path-space geodesics,
which is naturally obtained via a distributive frame 
introduced in Lemma~\ref{lem:frame}.
Let ${\cal A} = (p = u_0,u_1,\ldots,u_m = q)$ be an arch. 
Take a distributive frame ${\cal B} \vee {\cal C}$ for $I(p,q)$ 
containing $\supp x$, $\supp y$, and ${\cal A}$.
By $p \wedge q = 0$, ${\cal B} \vee {\cal C}$ is represented 
by a bipartite PIP $G =(V,E,\preceq)$ with color classes $B$ and $C$, 
where $B = \supp x$ and $C = \supp y$ in the b-coordinate of $x,y$.
Now elements $u$ in ${\cal B} \vee {\cal C}$ corresponds to stable ideal 
$U = \{ v \in B \cup C \mid v \preceq u \}$.
In this correspondence, arch ${\cal A} = (p = u_0,u_1,\ldots,u_m = q)$
is the arch $(B = U_0, U_1, \ldots, U_m = C)$ in 
the sense of Sections~\ref{subsec:cubical} and \ref{subsec:median}. 
Via the b-coordinates of $x,y$, 
define $X_i$, $Y_i$, $\|X_i \|$, $\|Y_i\|$ as in Section~\ref{subsec:cubical}.
By Lemma~\ref{lem:b-coordinate}, 
the quantities $\|X_i\|$ and $\|Y_i\|$ are written as
\begin{eqnarray*}
\|X_i\|^2 & = & d(x \wedge u_{i-1})^2 - d(x \wedge u_{i})^2,  \\
\|Y_i\|^2 & = & d(y \wedge u_i)^2 - d(y \wedge u_{i-1})^2, 
\end{eqnarray*}
where $d(z)$ is defined by
\begin{equation*}
d(z) := d(0,z) = \sqrt{\sum_{v \in V} z_v^2 } \quad (\mbox{if }z =_{\rm b} \sum_{v \in V} z_v v).
\end{equation*}
Accordingly, $v({\cal A};x,y)$ is defined by
\begin{eqnarray*}
&& v({\cal A};x,y)^2 := \sum_{i=1}^m (\|X_i\| + \|Y_i\|)^2 \\  
&& =  \sum_{i=1}^m \left(\sqrt{d(x \wedge u_{i-1})^2 - d(x \wedge u_{i})^2} + \sqrt{d(y \wedge u_i)^2 - d(y \wedge u_{i-1})^2} \right)^2. 
\end{eqnarray*}
Also the $(x,y)$-concavity of an arch is defined by (\ref{eqn:concave}). 
These notions are independent of the choice of a distributive frame.
Thus we have:
\begin{Prop}\label{prop:formula_for_modularsemilattice}
	For an $(x,y)$-concave arch ${\cal A} = (p =u_0,u_1,\ldots,u_m =q)$ 
	and any distributive frame ${\cal B} \vee {\cal C}$ containing ${\cal A}$, $\supp x$, and $\supp y$,
	the unique geodesic connecting $x,y$ in $K({\cal I}({\cal A}))$ belongs to $K({\cal B}\vee {\cal C})$, 
	and is given by (\ref{eqn:formula}). 
	Its length is equal to $v({\cal A};x,y)$.
	Moreover, any path-space geodesic belongs 
	to the path-space for some $(x,y)$-concave arch.  	
\end{Prop}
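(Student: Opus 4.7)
The plan is to reduce the statement to the median semilattice case already treated in Proposition~\ref{prop:formula_for_mediansemilattice}, using the distributive frame ${\cal B} \vee {\cal C}$ as a bridge between the modular semilattice $I(p,q)$ and a bipartite PIP. The key point is that all the ingredients appearing in the statement — the quantities $\|X_i\|, \|Y_i\|$, the $(x,y)$-concavity condition, the value $v({\cal A};x,y)$, and the path (\ref{eqn:formula}) — are expressible intrinsically in terms of the meet operation and the distance $d(\cdot)$ of $K({\cal L})$, so they agree with their median-case counterparts once we pass to any distributive frame.

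First I would show that every path-space geodesic from $x$ to $y$ in $K({\cal I}({\cal A}))$ actually lies in the median subcomplex $K(({\cal B} \vee {\cal C})({\cal A})) = K({\cal I}({\cal A})) \cap K({\cal B} \vee {\cal C})$. For this I invoke the nonexpansive order-preserving map $\phi:{\cal I}({\cal A}) \to \overline{({\cal B} \vee {\cal C})}$ provided by Lemma~\ref{lem:frame}~(5), which is the identity on $({\cal B} \vee {\cal C})({\cal A})$ and in particular fixes $x$ and $y$. By Lemma~\ref{lem:strictly_decrease}~(1) its simplicial extension is nonexpansive. A strict-contraction argument, modeled on the proofs of Lemmas~\ref{lem:decomposition}~(2) and \ref{lem:J(a)}~(3), then shows that if a polygonal $(x,y)$-path enters $K({\cal I}({\cal A})) \setminus K(({\cal B} \vee {\cal C})({\cal A}))$, one can find two successive points on it lying in a common simplex whose top element is collapsed by $\phi$ with a strictly positive coefficient contribution; Lemma~\ref{lem:strictly_decrease}~(2) then gives $d(\phi(P)) < d(P)$, contradicting geodesicity. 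Hence every geodesic must lie in the median subcomplex.

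Second, ${\cal B} \vee {\cal C}$ is a median semilattice represented by a (semi-)bipartite PIP (Lemma~\ref{lem:frame}~(4)), and by the b-coordinate isomorphism of Proposition~\ref{prop:median_semilattice}~(2) the arch ${\cal A} = (p = u_0, u_1, \ldots, u_m = q)$ corresponds to a bipartite arch $(B = U_0, U_1, \ldots, U_m = C)$ with $B = \supp x$, $C = \supp y$ (in the b-coordinate). Using Lemma~\ref{lem:b-coordinate} I would verify that the intrinsic identities
\[
\|X_i\|^2 = d(x \wedge u_{i-1})^2 - d(x \wedge u_i)^2, \qquad \|Y_i\|^2 = d(y \wedge u_i)^2 - d(y \wedge u_{i-1})^2
\]
agree with the quantities (\ref{eqn:|X_i|})--(\ref{eqn:|Y_i|}) computed from the b-coordinates of $x$ and $y$. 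Consequently both the $(x,y)$-concavity of ${\cal A}$ and the value $v({\cal A};x,y)$ are invariants of $({\cal A},x,y)$, independent of the choice of distributive frame. With this identification, Proposition~\ref{prop:formula_for_mediansemilattice}~(1) applied inside $K({\cal B} \vee {\cal C})$ yields both the formula (\ref{eqn:formula}) for the unique geodesic in the path space and the length identity $d(P) = v({\cal A};x,y)$. The ``moreover'' statement follows analogously from Proposition~\ref{prop:formula_for_mediansemilattice}~(2): if ${\cal A}$ is not $(x,y)$-concave, then the path-space geodesic in $K(({\cal B} \vee {\cal C})({\cal A}))$ already lies in the path space of some concave subarch of ${\cal A}$.

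The main obstacle is the strict-contraction step in the first paragraph. While Lemma~\ref{lem:frame}~(5) guarantees that $\phi$ is order-preserving and nonexpansive, one must check that along any polygonal $(x,y)$-path that genuinely exits $K(({\cal B} \vee {\cal C})({\cal A}))$ the hypothesis of Lemma~\ref{lem:strictly_decrease}~(2) is met — that is, some covering pair in the relevant maximal chain of ${\cal I}({\cal A})$ is collapsed by $\phi$ while carrying a nonzero coefficient mass. This requires a careful, simplex-by-simplex comparison between chains in $\tilde{\cal I}({\cal A})$ and chains supported in ${\cal B} \vee {\cal C}$, but the template is the same as in the proofs of Lemmas~\ref{lem:decomposition}~(2) and \ref{lem:J(a)}~(3), so the argument should go through.
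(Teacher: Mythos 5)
Your second and third paragraphs (the b-coordinate identification of $\|X_i\|,\|Y_i\|$, and the ``moreover'' via Proposition~\ref{prop:formula_for_mediansemilattice}~(2)) agree with the paper. The problem is the first step, which has a genuine gap, and it is not the route the paper takes.

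You propose to show that every geodesic from $x$ to $y$ in $K({\cal I}({\cal A}))$ lies in $K(({\cal B}\vee{\cal C})({\cal A}))$ by a strict-contraction argument with the map $\phi$ of Lemma~\ref{lem:frame}~(5). Two things go wrong. First, unlike the maps $a\wedge$ and $\omega_a$ in Lemmas~\ref{lem:decomposition}~(2) and \ref{lem:J(a)}~(3), the map $\phi$ is \emph{not} a retraction onto a subspace of $K({\cal I}({\cal A}))$: its codomain is $K(\overline{{\cal B}\vee{\cal C}})$, the Boolean extension's complex, which is a different ambient space and is not contained in $K({\cal I}({\cal A}))$. So even if you could prove $d(\phi(P)) < d(P)$, the shorter path $\phi(P)$ need not exist in $K({\cal I}({\cal A}))$, and there is no immediate contradiction with $P$ being a geodesic \emph{there}. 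Second, the strict decrease itself is doubtful. The strictness in Lemmas~\ref{lem:decomposition}~(2) and \ref{lem:J(a)}~(3) comes from a covering pair being \emph{collapsed} to a single point ($a\wedge p_k = a\wedge p_{k+1}$, or $\omega_a(p_k)=\omega_a(p_{k+1})$). Here $\phi=(\phi_B,\phi_C)$ is assembled from the Dedekind--Birkhoff maps (\ref{eqn:map}) into the Boolean extensions $2^B$ and $2^C$, and when a chain steps out of $({\cal B}\vee{\cal C})({\cal A})$, say $u'\wedge p\notin{\cal B}$, the map $\phi_B$ can perfectly well send the covering pair $(u\wedge p,\,u'\wedge p)$ to a covering pair in $2^B$ that simply lands outside ${\cal B}$. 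No collapse, hence no strict inequality, hence no contradiction. Indeed the same phenomenon is present in Lemma~\ref{lem:modularlattice}: the ``strict convexity'' of $K({\cal D})$ there is \emph{not} obtained from strict nonexpansiveness of (\ref{eqn:map}), but from the CAT(0) uniqueness of geodesics in $K({\cal M})$ together with the fact that the straight segment achieving the length lower bound already lies in $K({\cal D})$.

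This last mechanism is exactly what the paper uses. The paper's proof does not attempt strict contraction at all. It invokes three facts: (i) $K({\cal I}({\cal A}))$ is CAT(0) and uniquely geodesic (Lemma~\ref{lem:K(A)}~(2)), so there is a unique geodesic $P$; (ii) the nonexpansive $\phi$ from Lemma~\ref{lem:frame}~(5) sends $P$ to a path $\phi(P)$ inside the cubical path space $\bigcup_i[0,1]^{U_i}\subseteq K(\overline{{\cal B}\vee{\cal C}})$, giving $d(P)\geq d(\phi(P))\geq v({\cal A};x,y)$; and (iii) by Proposition~\ref{prop:formula_for_mediansemilattice}~(1) the formula path (\ref{eqn:formula}) has length exactly $v({\cal A};x,y)$ and lies in $K({\cal B}\vee{\cal C})$, hence in $K(({\cal B}\vee{\cal C})({\cal A}))\subseteq K({\cal I}({\cal A}))$. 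Therefore the formula path is a shortest $(x,y)$-path in $K({\cal I}({\cal A}))$, and by uniqueness it \emph{is} $P$. No strictness, no contradiction, no examination of when covering pairs are collapsed. You should replace your first paragraph with this ``nonexpansiveness plus achievability plus CAT(0) uniqueness'' argument; your paragraphs on the b-coordinates and the ``moreover'' part can then stand.
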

\begin{proof}
The statement follows from:
\begin{itemize}
	\item The path space $K({\cal I}({\cal A}))$ 
	is uniquely geodesic (Lemma~\ref{lem:K(A)}~(2)).
	\item There is a nonexpansive map 
	from $K({\cal I}({\cal A}))$ to $K(\overline{{\cal B} \vee {\cal C}})$
	fixing $K(({\cal B} \vee {\cal C})({\cal A}))$ (Lemma~\ref{lem:frame}~(5)).
	\item Any path-space geodesic connecting $x,y$ in $K(\overline{{\cal B} \vee {\cal C}})$
	belongs to $K({\cal B} \vee {\cal C})$, and is the path-space geodesic 
	in the path space for some $(x,y)$-concave arch (Proposition~\ref{prop:formula_for_mediansemilattice}).
\end{itemize}
\end{proof}
Finally we prove (R4) that the minimum of $v({\cal A};x,y)$ 
over all arches ${\cal A}$ is uniquely attained 
by some arch. 
Define $\xi = \xi_{x,y}:I(p,q) \to \RR^2$ by
\begin{equation*}
\xi(u) = (d(x \wedge u)^2, d(y \wedge u)^2) \quad (u \in I(p,q)).
\end{equation*}
Recall Figure~\ref{fig:arch} and the argument before Remark~\ref{rem:MSSP}.
Consider the convex hull of $\xi(u) \in \RR^2$ for all $u \in I(p,q)$, which is denoted by $\Conv I(p,q)$.
Observe that $\Conv I(p,q)$ belongs to the square 
$[0, d(x)^2] \times [0, d(y)^2]$ and
has $(0,0)$, $(d(x)^2,0)$, $(0, d(y)^2)$
as extreme points.
Consider elements $u \in I(p,q)$ mapped to extreme points of $\Conv I(p,q)$ other than zero $(0,0)$. 
This method was introduced by \cite{HH0ext}.
\begin{Prop}\label{prop:canonical_arch}
	\begin{itemize}
	\item[{\rm (1)}] The set of elements in $I(p,q)$ mapped to nonzero extreme points by $\xi$
	is arranged to be an $(x,y)$-concave arch ${\cal A}^*$.
	\item[{\rm (2)}] The arch ${\cal A}^*$ uniquely attains $\min_{\cal A} v({\cal A}; x,y)$.
	In particular, a geodesic in $K(I(p,q))$ connecting $x,y$ is 
	unique and is the path-space geodesic in $K({\cal A}^*)$.
	\end{itemize}  
\end{Prop}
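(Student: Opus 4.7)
My plan is to prove parts~(1) and~(2) together via a parametric analysis of the linear functional
\[
f_\lambda(u) \;:=\; (1-\lambda)\, d(x \wedge u)^2 + \lambda\, d(y \wedge u)^2
\]
on $I(p,q)$, $\lambda \in [0,1]$, whose maximizers sweep out the upper boundary of $\Conv I(p,q)$ as $\lambda$ ranges over $[0,1]$. Note $f_0$ is maximized at $u=p$ and $f_1$ at $u=q$, giving the endpoints $\xi(p) = (d(x)^2, 0)$ and $\xi(q) = (0, d(y)^2)$ of the upper boundary.

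For part~(1), as $\lambda$ sweeps $[0,1]$ the set of maximizers changes at finitely many breakpoints, and the nonzero extreme points of $\Conv I(p,q)$ are exactly the values $\xi(u^*)$ at optimizers $u^*$ at these breakpoints. I would then choose canonical optimizers $u^*_0 = p, u^*_1, \ldots, u^*_m = q$ monotonically so that $u^*_{i-1} \wedge p \succ u^*_i \wedge p$ and $u^*_{i-1} \wedge q \prec u^*_i \wedge q$. Since arbitrary joins in $I(p,q)$ need not exist, the monotone selection cannot be obtained by a direct exchange argument on $s^* \vee (s^*)'$. Instead, I would invoke Lemma~\ref{lem:frame} to embed any consecutive pair of candidate optimizers into a common distributive frame ${\cal B} \vee {\cal C}$ inside $I(p,q)$; in this semi-bipartite median semilattice the parametric optimization becomes the MSIP of Remark~\ref{rem:MSIP}, and parametric min-cut/max-flow directly furnishes monotone optimizers. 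The $(x,y)$-concavity~(\ref{eqn:concave}) of the resulting sequence is then automatic from the strict monotonicity of the slopes $\|Y_i\|^2/\|X_i\|^2$ along the upper boundary.

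For part~(2), I would use the identity
\[
v({\cal A}; x, y)^2 \;=\; d(x)^2 + d(y)^2 + 2 \sum_{i=1}^{m} \|X_i\|\,\|Y_i\|,
\]
which holds because orthogonality gives $x \wedge q = 0 = y \wedge p$ and hence $\sum_i \|X_i\|^2 = d(x)^2$, $\sum_i \|Y_i\|^2 = d(y)^2$. Minimizing $v$ over arches is thus equivalent to minimizing $\sum_i \|X_i\|\,\|Y_i\|$. The AM--GM inequality
\[
\sqrt{(a_1+a_2)(b_1+b_2)} \;\ge\; \sqrt{a_1 b_1} + \sqrt{a_2 b_2},
\]
strict unless $a_1/b_1 = a_2/b_2$, shows that refining an arch by inserting an intermediate element $u' \in I(p,q)$ whose image $\xi(u')$ lies strictly above the current segment strictly decreases the sum, while inserting a point exactly on the segment leaves it unchanged. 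Consequently, the minimum is attained exactly by the arch whose vertices are the nonzero extreme points of $\Conv I(p,q)$, namely ${\cal A}^*$. Uniqueness of the geodesic in $K(I(p,q))$ then follows by combining Proposition~\ref{prop:P-sequence=>arch} (every geodesic lies in a path space for some arch) and Proposition~\ref{prop:formula_for_modularsemilattice} (the path-space geodesic for an $(x,y)$-concave arch is unique with length $v$), applied to ${\cal A}^*$.

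The main obstacle will be the monotone representative selection in~(1). In a general modular semilattice, joins of incomparable elements need not exist, which blocks the usual exchange argument for monotonicity of parametric optima. The distributive-frame detour via Lemma~\ref{lem:frame} fixes this, but the frame depends on the pair of optimizers it must contain; producing a single frame rich enough to compare all breakpoint optimizers simultaneously (without already knowing the arch) requires either an iterative enlargement argument or a careful global choice of maximal chains in ${\cal I}(p)$ and ${\cal I}(q)$ that pass through all candidate optimizers.
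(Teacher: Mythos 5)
Part~(2) of your proposal is correct and is an algebraic recasting of the paper's Lemma~\ref{lem:v(K)>v(K')}: expanding
$v(\mathcal{A};x,y)^2 = d(x)^2 + d(y)^2 + 2\sum_i \|X_i\|\,\|Y_i\|$
(valid since orthogonality forces $x\wedge q = 0 = y\wedge p$, so the quadratic sums telescope) and invoking the strict AM--GM inequality $\sqrt{(a_1+a_2)(b_1+b_2)} > \sqrt{a_1 b_1}+\sqrt{a_2 b_2}$ unless $a_1/b_1=a_2/b_2$ is exactly the content of the triangle-inequality computation~(\ref{eqn:triangle}) in the paper; both say that strictly enlarging the polygon strictly decreases $v$, while inserting a boundary point leaves it unchanged. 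That piece of your argument goes through.

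Part~(1) has a genuine gap, which you partially diagnose but do not resolve, and the problem is actually worse than you state. It is not only that Lemma~\ref{lem:frame} needs an arch before you have one (circularity): a distributive frame $\mathcal{B}\vee\mathcal{C}$ is a \emph{proper} subsemilattice of $I(p,q)$, so parametric MSIP run inside a frame optimizes over the wrong feasible region, and the nonzero extreme points of $\Conv I(p,q)$ need not all lie in any single frame --- a frame-local exchange cannot certify global extremality. The paper's proof of~(1) does not use frames or a parametric sweep at all; it rests on Lemma~\ref{lem:supermodular} (supermodularity and strict monotonicity of $a\mapsto d(x\wedge a)^2$ on a modular lattice) together with a direct use of (LFL). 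Given $u,u'\in I(p,q)$ mapped to equal or adjacent nonzero extreme points, set $a:=u\wedge p$, $b:=u\wedge q$, $a':=u'\wedge p$, $b':=u'\wedge q$ and form
$v := (a\vee a')\vee(b\wedge b')$ and $v' := (a\wedge a')\vee(b\vee b')$,
both of which exist in $I(p,q)$ because the relevant triples are pairwise bounded. Supermodularity yields $\xi(v)+\xi(v')\geq \xi(u)+\xi(u')$ coordinatewise; extremality of $\xi(u),\xi(u')$ in $\Conv I(p,q)$ then forces equality, and strict monotonicity gives $a'\preceq a$, $b\preceq b'$, which (after using orthogonality to exclude the degenerate case where only one inequality is strict) is precisely the arch condition~(\ref{eqn:arch_I(p,q)}). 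This substitute $(v,v')$ for the nonexistent join $u\vee u'$, manufactured via (LFL), is the key idea your proposal is missing; no local parametric or frame-based exchange supplies it.
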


In the proof of (1), the following property has a key role.
\begin{Lem}\label{lem:supermodular}
	Let ${\cal M}$ be a modular lattice.
	For $x \in K({\cal M})$, 
	the function $a \mapsto d(x \wedge a)^2$ is supermodular, i.e.,
	\begin{equation}\label{eqn:supermodular}
	d(x \wedge a)^2 + d(x \wedge b)^2 \leq d(x \wedge (a \wedge b))^2 + d(x \wedge (a \vee b))^2
	\quad (a,b \in {\cal M}).
	\end{equation}
	In addition, if $\tau(x)$ is the maximum element in ${\cal M}$, 
	then $a \mapsto d(x \wedge a)^2$ is monotone increasing, i.e.,
	\begin{equation*}
	d(x \wedge a)^2 < d(x \wedge b)^2 \quad (a,b \in {\cal M}: a \prec b)
	\end{equation*}
\end{Lem}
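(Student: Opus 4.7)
The plan is to reduce both claims to a combinatorial identity along a single maximal chain of ${\cal M}$ that refines $\supp x$. First, I would choose a maximal chain $C: 0 = q_0 \prec q_1 \prec \cdots \prec q_N$ in the principal ideal ${\cal I}(\tau(x) \vee a \vee b)$ with $q_N = \tau(x) \vee a \vee b$ and $\supp x \subseteq C$. Writing $x = \sum_{j=0}^N \mu_j q_j$ with $\mu_j \geq 0$, set
\[
t_j := \mu_j + \mu_{j+1} + \cdots + \mu_N,
\]
which is non-increasing and non-negative, and define
\[
I_c := \{ j \in \{1,\ldots,N\} \mid c \wedge q_j \succ c \wedge q_{j-1} \} \qquad (c \in {\cal M}).
\]
After deleting repetitions from $q_0 \wedge c \preceq q_1 \wedge c \preceq \cdots \preceq q_N \wedge c$, the modular law makes the resulting chain cover-preserving, and an application of (\ref{eqn:varphi_delta2}) yields the master formula
\[
d(x \wedge c)^2 = \sum_{j \in I_c} t_j^2.
\]

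For the monotonicity statement, observe that $c \preceq c'$ forces $I_c \subseteq I_{c'}$: if $c' \wedge q_j = c' \wedge q_{j-1}$, then meeting both sides with $c$ yields $c \wedge q_j = c \wedge q_{j-1}$. Since $|I_c| = r(c)$, the inclusion is proper when $a \prec b$. If $\tau(x)$ is the maximum of ${\cal M}$, we can pick $q_N = \tau(x)$, so $\mu_N > 0$ and $t_j > 0$ for all $1 \leq j \leq N$; any $j \in I_b \setminus I_a$ then contributes $t_j^2 > 0$ to $d(x \wedge b)^2 - d(x \wedge a)^2$, yielding strict inequality.

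For supermodularity, I would introduce the rank profile $\rho_c(j) := r(c \wedge q_j) = |I_c \cap \{1,\ldots,j\}|$ and the defect $\delta(j) := \rho_{a \wedge b}(j) + \rho_{a \vee b}(j) - \rho_a(j) - \rho_b(j)$. The modular rank equality applied to $a \wedge q_j$ and $b \wedge q_j$ combined with $(a \wedge q_j) \vee (b \wedge q_j) \preceq (a \vee b) \wedge q_j$ gives
\[
\rho_a(j) + \rho_b(j) = r(a \wedge b \wedge q_j) + r\bigl((a \wedge q_j) \vee (b \wedge q_j)\bigr) \leq \rho_{a \wedge b}(j) + \rho_{a \vee b}(j),
\]
so $\delta(j) \geq 0$; trivially $\delta(0) = 0$, and $\delta(N) = 0$ by the full modular equality $r(a) + r(b) = r(a \wedge b) + r(a \vee b)$ together with $q_N \succeq a \vee b$. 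The desired inequality is
\[
\sum_{j=1}^N t_j^2 \bigl[ \delta(j) - \delta(j-1) \bigr] \geq 0,
\]
which Abel summation rewrites as $\sum_{j=1}^{N-1} \delta(j) \bigl[ t_j^2 - t_{j+1}^2 \bigr]$ using the vanishing at the endpoints; both factors are non-negative. The only substantive input is modularity (via $\delta \geq 0$ and $\delta(N) = 0$), and the monotonicity of $t_j$ is a free consequence of $\mu_j \geq 0$; I do not foresee a serious obstacle beyond carefully verifying the master formula through the chain-collapse argument.
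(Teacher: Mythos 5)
Your proposal is correct, and the supermodularity half takes a genuinely different route from the paper. Both proofs begin the same way: extend $\supp x$ to a maximal chain $q_0 \prec \cdots \prec q_N$, invoke modularity to see that $c \wedge q_{j}$ either equals or covers $c \wedge q_{j-1}$, and obtain the master formula $d(x \wedge c)^2 = \sum_{j \in I_c} t_j^2$ via (\ref{eqn:varphi_delta2}). Your monotonicity argument ($c \preceq c'$ implies $I_c \subseteq I_{c'}$ by meeting with $c$, and $|I_c| = r(c)$ forces proper containment) matches the paper's $J(a) \subseteq J(b)$ argument in spirit, though yours is a cleaner contrapositive; you are also more explicit than the paper in insisting $q_N \succeq \tau(x) \vee a \vee b$, which is needed for $|I_c| = r(c)$ and for the endpoint vanishing of $\delta$. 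The real divergence is in the supermodularity proof. The paper reduces to the local covering case (where $a, b$ cover $a \wedge b$ and are covered by $a \vee b$), and then runs a case analysis on how the two new indices of $J$ distribute between $J(a)$ and $J(b)$, ruling out the bad configuration $J(a) = J(b) = J + i$ by a direct modular-lattice argument. You instead introduce the rank profile $\rho_c(j) = r(c \wedge q_j)$ and the pointwise defect $\delta(j) = \rho_{a \wedge b}(j) + \rho_{a \vee b}(j) - \rho_a(j) - \rho_b(j)$, show $\delta \geq 0$ by applying the modular rank identity to $a \wedge q_j$ and $b \wedge q_j$, note $\delta(0) = \delta(N) = 0$, and finish by Abel summation against the nonincreasing sequence $t_j^2$. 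This avoids the covering-case reduction and the case analysis entirely, replacing them with a single slice-wise inequality integrated against a monotone weight; it makes clearer that supermodularity of $d(x \wedge \cdot)^2$ is really the supermodularity of each rank profile $\rho_\cdot(j)$, smoothed by the squared tail sums. Both proofs are correct; yours is somewhat more conceptual, the paper's more hands-on.
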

\begin{proof}
	Suppose that $x = \sum_{i=0}^n \lambda_i p_i$ 
	for a maximal chain $p_i$ $(i=0,1,2,\ldots,n)$.
	Let $J(a)$ denote the set of indices $i (>0)$ with $p_i \wedge a \succ p_{i-1} \wedge a$.
	Then, as in the proof of Lemma~\ref{lem:strictly_decrease},
	it holds
	\begin{equation}\label{eqn:d(xva)^2}
	d(x \wedge a)^2 = \sum_{i \in J(a)} (\lambda_i + \cdots + \lambda_n)^2. 
	\end{equation}
	Here it holds
	\[
	J(a) \subseteq J(b) \quad (a,b: a \preceq b).
	\]
	Indeed, suppose that $i \in J(a)$ and $b$ covers $a$. Recall the argument of the proof of Lemma~\ref{lem:decomposition}~(1).
	Since $p_i \wedge a$ covers $p_{i-1} \wedge a$, 
	it holds $p_{i-1} \vee a = p_{i} \vee a$.
	If $b \preceq p_{i-1} \vee a = p_{i} \vee a$, 
	then $b \wedge p_{i-1}$ covers $a \wedge p_{i-1}$ and  
	 $b \wedge p_{i}$ covers $a \wedge p_{i}$, 
	 which implies that $b \wedge p_{i}$ covers  $b \wedge p_{i-1}$, i.e., $i \in J(b)$.
	 If $b \not \preceq p_{i-1} \vee a = p_{i} \vee a$, 
	 then $b \wedge p_i = a \wedge p_{i}$ and $b \wedge p_{i-1} = a \wedge p_{i-1}$, which also implies $i \in J(b)$.
	  
	Also $|J(a)|$ is equal to the rank of $a$; indeed,
	consider the chain consisting of $a \wedge p_i$s, which is a maximal chain from $0$ to $a$.
	This means that $J(a) \subset J(b)$ if $a \prec b$.
	Thus if $\tau(x)$ is the maximum element in ${\cal M}$, 
	then $\lambda_n > 0$, and, by (\ref{eqn:d(xva)^2}), $a \mapsto d(x \wedge a)^2$ is monotone increasing.
	
	We next show the supermodularity.
	By the standard argument, 
	it suffices to show the supermodular modularity inequality (\ref{eqn:supermodular})
	for those pairs $(a,b)$ for which $a$ and $b$ cover $a \wedge b$ and are 
	covered by $a \vee b$; see, e.g., the proof of \cite[Proposition 3.8]{HH0ext}.
	Let $J := J(a \wedge b)$.
	For some indices $i,j \not \in J$ with $i < j$, 
	it holds $J(a \vee b) = J + i + j$.
	Then $(J(a), J(b))$ is equal to $(J + i, J + i)$, 
	$(J + i, J + j)$, $(J + j, J + i)$, or $(J + j, J + j)$.
	We show that the first case $(J + i, J + i)$ cannot occur.
	Suppose that $J(a) = J+i$.
	Then $p_i \wedge a$ covers $p_{i-1} \wedge a$.
	Here ($*$) $p_{i-1} \wedge a = p_{i-1} \wedge a \wedge b$ holds.
	This follows from the fact that  
	$p_i \wedge a$ covers or equals 
	$p_i \wedge a \wedge b$ and covers $p_{i-1} \wedge a \succeq  p_{i-1} \wedge a \wedge b = p_i \wedge a \wedge b$ (by $i \not \in J)$.
	Thus $a = (p_i \wedge a) \vee (a \wedge b)$.
	Next, let $w := p_{j-1} \wedge a \in [p_i \wedge a, a]$.
	Then $a = w \vee (a \wedge b)$ holds (by $w \in [p_i \wedge a, a]$ and $a = (p_i \wedge a) \vee (a \wedge b)$).
	By the same argument for ($*$), it holds $w = p_{j-1} \wedge (a \vee b)$. 
	Namely $a$ is determined by $p_{j-1}$, $a \wedge b$, and $a \vee b$.
	If $J(b) = J+ i$, then by the same argument we have $b = w \vee (a \wedge b) = a$; 
	this is a contradiction to $a \neq b$.
	
	Thus $(J(a), J(b))$ is equal to 
	$(J + i, J + j)$, $(J + j, J + i)$, or $(J + j, J + j)$.
	For the first and second cases, by (\ref{eqn:d(xva)^2}) we have
	\begin{eqnarray*}
	d(x \wedge a)^2 + d(x \wedge b)^2 
	& =& 2 d(x \wedge (a \wedge b))^2 + (\lambda_i + \cdots + \lambda_n)^2 + (\lambda_j + \cdots + \lambda_n)^2 \\ 
	&= & d(x \wedge (a \wedge b))^2 + d(x \wedge (a \vee b))^2.
	\end{eqnarray*}
	For the last case, we have
	\begin{eqnarray*}
		d(x \wedge a)^2 + d(x \wedge b)^2 & = & 2 d(x \wedge (a \wedge b))^2 + 
		(\lambda_j + \cdots + \lambda_n)^2 + (\lambda_j + \cdots + \lambda_n)^2 \\
		& \leq & 2 d(x \wedge (a \wedge b)) + (\lambda_i + \cdots + \lambda_n)^2 + (\lambda_j + \cdots + \lambda_n)^2 \\
		& \leq & d(x \wedge (a \wedge b))^2 + d(x \wedge (a \vee b))^2.
	\end{eqnarray*}
\end{proof}

\begin{proof}[Proof of Proposition~\ref{prop:canonical_arch}~(1)]
	Choose $u,u' \in I(p,q)$ such that $\xi(u)$ 
	and $\xi(u')$ are nonzero extreme points in $\Conv I(p,q)$.
	Suppose that $\xi(u) = \xi(u')$ or 
	$\xi(u)$ and $\xi(u')$ 
	are adjacent extreme points with 
	$d(x \wedge u) \leq d(x \wedge u')$ and $d(y \wedge u) \geq d(y \wedge u')$ 
	(with at least one of the inequality being strict).
	We show that $u=u'$ for the former case and  
	$u \wedge p \prec u' \wedge p$ and $u \wedge q \succ u' \wedge q$ for the latter case,
	which implies the statement.
	
	Let $(a,b) := (u \wedge p, u \wedge q)$ and $(a',b') := (u' \wedge p, u' \wedge q)$.
	All pairs $(a,  a')$, $(a,  b \wedge b')$,  
	and $(a', b \wedge b')$ among the
	triple $(a, a', b \wedge b')$
	are bounded.
	By (LFL), 
	their join
	$v := (a \vee a') \vee (b \wedge b')$ 
	exists and belongs to $I(p,q)$ (Lemma~\ref{lem:I(p,q)}). 
	Similarly the join $v' := (a \wedge a' ) \vee (b \vee b')$ 
	of triple $(a \wedge a', b, b')$
	exists and belongs to $I(p,q)$.
	Therefore $\xi(v) =
	( d(x \wedge (a \vee a'))^2,  d(y \wedge (b \wedge b'))^2)$ and 
	$\xi(v') = 
	( d(x \wedge (a \wedge a'))^2,  d(y \wedge (b \vee b'))^2$. 
	By supermodularity (Lemma~\ref{lem:supermodular}), we have
	\begin{eqnarray*}
		&& \xi(v) + \xi(v')  = 	( d(x \wedge (a \vee a'))^2 + d(x \wedge (a \wedge a'))^2,  
		d(y \wedge (b \wedge b'))^2 +  d(y \wedge (b \vee b'))^2) \\
		&& \geq ( d(x \wedge a)^2 + d(x \wedge a')^2,  
		d(y \wedge b)^2 +  d(y \wedge b')^2) \\
		&& = \xi(u) + \xi(u').
	\end{eqnarray*}
	Then both $\xi(v)$ and $\xi(v')$ must belong to 
	$[\xi(u), \xi(u')]$ since it is an edge or an extreme point of $\Conv I(p,q)$.
	By $d(x \wedge (a \vee a'))^2 \geq d(x \wedge a)^2$ and
	$d(y \wedge (b \vee b'))^2 \geq d(y \wedge b)^2$, 
	it must hold
	$\xi(v)  = \xi(u)$
	and $\xi(v') = \xi(u')$.
	Thus $(d(x \wedge (a \vee a')), d(y \wedge (b \wedge b'))) = (d(x \wedge a), d(y \wedge b))$ and 
	$(d(x \wedge (a \wedge a')), d(y \wedge (b \vee b'))) = (d(x \wedge a'), d(y \wedge b'))$. 
	By  $p = \tau(x)$, $q = \tau (y)$, and  Lemma~\ref{lem:supermodular}, 
	the functions $d(x \wedge \cdot)$ and $d(y \wedge \cdot)$ 
	are monotone increasing.
	Then we have $(a \wedge a', a \vee a') = (a',a)$ and $(b \wedge b', b \vee b') = (b,b')$. 
    Therefore $a' \preceq a$ and $b \preceq b'$.
    If $\xi(u) = \xi(u')$, then $a' \succeq a$ and $b \succeq b'$ 
    also hold, and we have $u = a \vee b = a' \vee b' = u'$.
    Suppose that $\xi(u) \neq \xi(u')$. Then at least one of $a' \prec a$ and $b \prec b'$ holds.
    If $a' = a$ and $b \prec b'$ (say), 
    then it necessarily holds $a = a' = p \wedge u_0 = p$ and $u' = u_1 = p \vee b_1 (\succ 0)$; 
    however this contradicts the orthogonality of $p,q$. Therefore $a' \prec a$ and $b \prec b'$, as required.
\end{proof}

For a convex polygon $Q \subseteq [0, \kappa] \times [0, \lambda]$
containing $(0,0)$, $(\kappa,0)$, $(0, \lambda)$ (as extreme points),
define $v(Q) (\geq 0)$ by
\[
v(Q)^2 := \sum_{i=1}^{m} (\sqrt{\alpha_{i-1} - \alpha_{i}} + \sqrt{\beta_{i} - \beta_{i-1}})^2,
\]
where $(\kappa,0) = (\alpha_0,\beta_0), (\alpha_1,\beta_1),\ldots (\alpha_m, \beta_m) = (0, \lambda)$
are nonzero extreme points of $Q$ 
such that $(\alpha_i,\beta_i)$ and $(\alpha_{i+1},\beta_{i+1})$ are adjacent by an edge.
Now $v(\Conv I(p,q)) = v({\cal A}^*;x,y)$ with $\kappa = d(p)^2$ and $\lambda = d(q)^2$.
Then Proposition~\ref{prop:canonical_arch}~(2) follows from:
\begin{Lem}\label{lem:v(K)>v(K')}
	For two polygons $Q,Q' \subseteq [0, \kappa] \times [0, \lambda]$
	containing $(0,0)$, $(\kappa,0)$, $(0, \lambda)$,
	if $Q \subset Q'$ (proper inclusion), then $v(Q) > v(Q')$.
\end{Lem}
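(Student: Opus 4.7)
The plan is to rewrite $v$ in a form where monotonicity under inclusion becomes visible. Letting $(a_i, b_i) := (\alpha_{i-1} - \alpha_i, \beta_i - \beta_{i-1})$ denote the edge increments of the concave upper chain of $Q$ from $(\kappa, 0)$ to $(0, \lambda)$, expansion gives
\[
v(Q)^2 \;=\; \sum_{i=1}^{m} (\sqrt{a_i} + \sqrt{b_i})^2 \;=\; \kappa + \lambda + 2 f(Q), \qquad f(Q) := \sum_i \sqrt{a_i b_i},
\]
since $\sum_i a_i = \kappa$ and $\sum_i b_i = \lambda$ are fixed by the three prescribed extreme points. Hence the lemma reduces to $f(Q) > f(Q')$ for $Q \subsetneq Q'$. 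The combinatorial engine is the AM-GM inequality $\sqrt{ac} + \sqrt{bd} \leq \sqrt{(a+b)(c+d)}$ (equivalent to $2 \sqrt{abcd} \leq ad + bc$), with equality iff $ad = bc$: subdividing a single edge $(a+b, c+d)$ into two edges $(a, c)$ and $(b, d)$ by a new extreme point lying strictly above the original edge strictly decreases $f$.

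The main obstacle is that a general inclusion $Q \subsetneq Q'$ cannot be realized as a sequence of such single-edge refinements, since the chain $\pi_{Q'}$ may contain extreme points off $\pi_Q$ while simultaneously rendering some vertices of $\pi_Q$ non-extreme in $Q'$. I plan to bypass this bookkeeping by passing to a continuous formulation: parametrize each chain as the graph of a concave decreasing piecewise-linear function $g_Q, g_{Q'} : [0, \kappa] \to [0, \lambda]$ with $g(0) = \lambda$ and $g(\kappa) = 0$. Then $\sqrt{a_i b_i} = a_i \sqrt{b_i/a_i}$ with $b_i/a_i = -g'_Q$ on the $i$-th edge yield
\[
f(Q) \;=\; \int_0^{\kappa} \sqrt{-g'_Q(\alpha)}\, d\alpha,
\]
and the inclusion $Q \subseteq Q'$ becomes the pointwise relation $g_Q \leq g_{Q'}$. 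Setting $\phi := -g'_Q$ and $\phi' := -g'_{Q'}$ (both nonnegative, non-decreasing by concavity, piecewise constant), we have $\int_0^\kappa \phi\,d\alpha = \int_0^\kappa \phi'\,d\alpha = \lambda$ together with the partial-sum domination $\int_0^t \phi\,d\alpha = \lambda - g_Q(t) \geq \lambda - g_{Q'}(t) = \int_0^t \phi'\,d\alpha$ for every $t \in [0, \kappa]$.

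The remaining inequality $\int \sqrt{\phi}\,d\alpha > \int \sqrt{\phi'}\,d\alpha$ is a Hardy-Littlewood-P\'olya majorization statement for the strictly concave $\sigma(y) = \sqrt{y}$. I would prove it by combining the tangent bound $\sigma(\phi') - \sigma(\phi) \leq \sigma'(\phi)(\phi' - \phi)$ with integration by parts: setting $\mu(\alpha) := \int_0^\alpha (\phi' - \phi)\,ds \leq 0$ (with $\mu(0) = \mu(\kappa) = 0$) and observing that $\sigma'(\phi) = 1/(2\sqrt{\phi})$ is non-increasing in $\alpha$, the cross term rewrites as $\int \sigma'(\phi)(\phi' - \phi)\,d\alpha = -\int \mu\, d\sigma'(\phi) \leq 0$, because $\sigma'(\phi)$ jumps only downward and $\mu \leq 0$. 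Strict concavity of $\sqrt{\cdot}$ upgrades this to strict inequality whenever $\phi \not\equiv \phi'$, equivalently $Q \neq Q'$; the integration by parts amounts to a finite Abel summation once one unpacks the jumps of $\phi$ and $\phi'$.
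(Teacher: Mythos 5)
Your proof is correct, and it takes a genuinely different route from the paper. Both arguments share the same kernel inequality: the paper's triangle-inequality step for three non-collinear points is, after expanding, exactly your AM--GM inequality $\sqrt{ac}+\sqrt{bd}\le\sqrt{(a+b)(c+d)}$. What differs is the globalization. The paper works locally: it perturbs one interior point of an edge of $Q$ slightly outward, shows by a three-point calculation that this strictly decreases $v$, and then expands $Q$ continuously to $Q'$, discarding vertices that become non-extreme along the way (the passage ``in this way, we can expand $Q$ until $Q=Q'$'' hides the deformation bookkeeping you flag). You instead handle the inclusion in one global step: rewriting $v(Q)^2 = \kappa+\lambda+2\sum_i\sqrt{a_i b_i}$, passing to the integral $f(Q)=\int_0^\kappa\sqrt{-g_Q'}$ over the concave boundary chain, translating $Q\subsetneq Q'$ into partial-integral domination $\int_0^t\phi\ge\int_0^t\phi'$ with equal totals for the nondecreasing step functions $\phi=-g_Q'$, $\phi'=-g_{Q'}'$, and concluding by the Hardy--Littlewood--P\'olya tangent-line-plus-Abel argument for the strictly concave $\sqrt{\cdot}$. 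This neatly sidesteps the need to construct a deformation path and handles non-nested vertex sets automatically. One minor point you should state explicitly: $\phi$ is bounded away from $0$ and $\infty$ on $[0,\kappa]$ (because consecutive nonzero extreme points have strict decrements in both coordinates), so $\sigma'(\phi)=1/(2\sqrt{\phi})$ is bounded, the boundary term in the integration by parts is legitimately zero, and the tangent inequality is applied only at finite-slope points; without this remark the $y\mapsto\sqrt{y}$ tangent bound at $y=0$ would be problematic.
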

\begin{proof}
	Choose an edge of $Q$ joining nonzero extreme points $(\alpha,\beta)$ and $(\alpha',\beta')$, 
	and choose a point $(\alpha^*, \beta^*)$ in the interior the edge.
	Suppose that $\alpha > \alpha^* >\alpha'$ and $\beta' < \beta^* < \beta$. 
	Perturb $(\alpha^*, \beta^*)$ into outside of $Q$ 
	so that $Q \subset \tilde Q := {\rm Conv} (Q \cup \{ (\alpha^*, \beta^*)\})$. 
	The perturbation is sufficiently small. 
	The set of extreme points of $\tilde Q$ is obtained by adding $(\alpha^*, \beta^*)$ 
	to the set of extreme points of $Q$, where $(\alpha^*, \beta^*)$
	is adjacent to $(\alpha,\beta)$ and $(\alpha', \beta')$. 
	Then $v(\tilde Q)^2 - v(Q)^2$ is equal to 
	\begin{equation}\label{eqn:triangle}
	(\sqrt{\alpha - \alpha^*} + \sqrt{\beta^* - \beta})^2 + 
	(\sqrt{\alpha^* - \alpha'} + \sqrt{\beta' - \beta^*})^2 - (\sqrt{\alpha - \alpha'} + \sqrt{\beta' - \beta})^2.
	\end{equation}
	Now 
	the points $(\alpha, \beta)$, $(\alpha^*, \beta^*)$, and $(\alpha', \beta')$ in $\RR^2$ are not collinear. The non-collinearity 
	is equivalent to $(\alpha - \alpha^*)(\beta' - \beta^*) \neq (\alpha^* - \alpha')(\beta^* - \beta)$.
	This in turn implies that the points 
	$(\sqrt{\alpha - \alpha^*} + \sqrt{\beta^* - \beta},0)$, $(\sqrt{\beta^* - \beta}, \sqrt{\alpha^* - \alpha'})$, $(0, \sqrt{\alpha^* - \alpha'} + \sqrt{\beta' - \beta^*})$ are not collinear.
	By the triangle inequality for these three points, 
    (\ref{eqn:triangle}) is negative, and thus $v(\tilde Q) < v(Q)$.
	In this way, we can expand $Q$ until $Q = Q'$.
	Then $v$ is strictly decreasing.
	In the expansion, 
	we can remove extreme points with keeping $v(Q)$ when they become non-extreme.
\end{proof}

Now we are ready to prove the unique geodesic property of $K({\cal L})$ (Theorem~\ref{thm:unique_geodesic}).
\begin{proof}[Proof of Theorem~\ref{thm:unique_geodesic}]
	Let $x,y$ be arbitrary points in $K({\cal L})$.
	Let $p :=\tau (x)$ and $q := \tau (y)$.	
	Consider
	$a :=  (\omega_q(p) \vee q) \wedge (\omega_p(q) \vee p) = \omega_q(p) \vee \omega_p(q)$ (by Lemma~\ref{lem:I(p,q)}). 
	We can assume that $a \succ 0$, i.e., $x,y$ are not orthogonal.
	Consider $x \vee a$ and $y \vee a$, which belong to $K({\cal F}(a))$.
	Now $x \vee a$ and $y \vee a$ are orthogonal in $K({\cal F}(a))$, since 
	the minimum element of ${\cal F}(a)$ is $a$, 
	$\tau (x \vee a) = \omega_p(q) \vee p$, and $\tau (y \vee a) = \omega_q(p) \vee q$.
	By Proposition~\ref{prop:canonical_arch},
    a geodesic $Q$ 
    between $x \vee a$ and $y \vee a$ in $K({\cal F}(a))$ uniquely exists 
    and also belongs to the path space $K({\cal I}({\cal A}^*))$ for the arch ${\cal A}^*$ for 
    $I(\tau (x \vee a), \tau(y \vee a))$ (with minimum $a$).

    Consider a distributive frame 
    ${\cal B} \vee {\cal C}$ containing ${\cal A}^*$, $\supp x$, and 
    $\supp y$.
    The above $Q$ is a geodesic between $x \vee a$ and $y \vee a$ in 
    $K({\cal B} \vee {\cal C})$.
    Also $x \wedge a$ and $y \wedge a$ 
    belong to $K({\cal D})$ for the distributive sublattice ${\cal D} := ({\cal B} \vee {\cal C}) \cap {\cal I}(a)$ 
    of modular lattice ${\cal I}(a)$.
    Therefore there is a unique geodesic $R$ in 
    between $x,y$ in $K({\cal D})$, which must be a geodesic in $K({\cal I}(a))$ (Lemma~\ref{lem:modularlattice}).
    Now we have two geodesics $Q,R$ in $K({\cal B} \vee {\cal C})$, 
    where $Q$ connects $x \vee a$ and $y \vee a$ and 
    $R$ connects $x \wedge a$ and $y \wedge a$.
    Represent ${\cal B} \vee {\cal C}$ by a semi-bipartite PIP $G_{\preceq}$ 
    with tripartition $B',C',Z$.
    Consider paths $Q,R$ and points $x,y, x \vee a,y \vee a, x \wedge a, y \wedge a$ 
    in the b-coordinate. Notice that $a$ corresponds to $Z$ 
    and the coefficient of $z \in Z$ in $Q$ is always $1$.
    There is no pair $u \in Z, v \in B' \cup C'$ with $u \succ v$.
    This means that every path in $K(G'_{\preceq})$ can be lifted to 
    $K(G_{\preceq})$ by defining the coefficient of $z \in Z$ as $1$.
    Therefore the projection $Q' = Q|_{B' \cup C'}$ is 
    a unique geodesic in $K(G'_{\preceq})$ connecting $x|_{B' \cup C'}$ and $y|_{B' \cup C'}$.
    Also $R$ is a unique geodesic in $K(G^0_{\preceq})$ connecting $x|_{Z} = x \wedge a$ and $y|_{Z} = y \wedge a$.
    By Lemma~\ref{lem:product_median}, 
    we obtain a unique geodesic $P = (Q',R)$ in 
    $K(G_{\preceq}) = K({\cal D})$ connecting $x$ and $y$.
    By considering the original coordinates in $K({\cal L})$   
    we have $a \vee P(t) = Q(t)$ and $a \wedge P(t) = R(t)$.
    By Lemma~\ref{lem:decomposition} (3), 
    $P$ is a geodesic in $K({\cal L})$ connecting $x$ and $y$ and satisfies
    \begin{equation}\label{eqn:tight}
    d(P)^2 = d(a \vee P)^2 + d(a \wedge P)^2 
    = d(a \vee x, a \vee y)^2 + d(a \wedge x, a \wedge y)^2.
    \end{equation}
    
    Finally we show that the constructed geodesic $P$ is actually a unique geodesic.   
    Consider another geodesic $P'$ in $K({\cal L})$ connecting $x$ and $y$.
    By Lemma~\ref{lem:decomposition}~(3), the path $P'$ must satisfy the above equality (\ref{eqn:tight}).
    Namely $d(a \vee P') = d(a \vee x, a \vee y)$ must hold.
    By the uniqueness of a geodesic connecting 
    an orthogonal pair, 
    the images of $a \vee P'$ and $a \vee P$ are the same.
    Now $a \vee P'$ belongs to the path space $K({\cal I}({\cal A}^*))$.
    This implies that $P'$ also belongs to $K(\tilde{\cal I}({\cal A}^*))$.
  By the unique geodesic property of   $K(\tilde{\cal I}({\cal A}^*))$ (Lemma~\ref{lem:K(A)}~(1)), 
  it must hold $P = P'$, as required.  
\end{proof}

\begin{Rem}\label{rem:MVSP}
	As in the case of CAT(0) cubical complex or median orthoscheme complex
	(Remarks~\ref{rem:MSSP} and \ref{rem:MSIP}),
	a geodesic in $K({\cal L})$ can be obtained via 
	the arch ${\cal A}^*$.
	Again ${\cal A}^*$ is obtained by the following (parametric) optimization problem:
	\begin{eqnarray*}
	{\rm Max.} && (1 -\lambda) d(u \wedge x)^2 + \lambda d(u \wedge y)^2 \\
	{\rm s.t.} && u \in I(p,q) = {\cal I}(p) \vee {\cal I}(q).
	\end{eqnarray*}
	This problem is a far-reaching generalization 
	of the maximum weight stable set problem in a bipartite graph, 
	and includes {\em weighted maximum vanishing subspace problem (WMVSP)}~\cite{HamadaHirai} 
	as a special case where ${\cal L}$ is a modular semilattice of 
	vector subspaces on which each of given bilinear forms vanishes.
	WMVSP is viewed as a {\em submodular optimization on modular lattice}, 
	which is one of the current issues in combinatorial optimization.
	A polynomial time algorithm for WMVSP is not known in general, 
	and deserves a challenging open problem. 	
	See \cite[Section 5]{HiraiNakashima} for a polynomial solvable special case.
\end{Rem}

\section*{Acknowledgments}
The author thanks Koyo Hayashi 
for meticulously reading and numerous helpful comments.
This work was partially supported by JSPS KAKENHI Grant Numbers 
JP26280004, JP17K00029.

\appendix
\section{Appendix: Proof of Theorem~\ref{thm:owen}~(2)}
Let $K = K_m$ be the subspace of $\RR^m$ consisting of points $x = (x_1,x_2,\ldots,x_m)$ satisfying
\[
x_i < 0 \Rightarrow x_j \leq 0 \quad (1 \leq  j \leq i \leq m).
\]
Equivalently $K = \bigcup_{i=0}^m (\RR_{-}^{i} \times \RR_{+}^{m-i})$, 
where $\RR_-$ denotes the set of nonpositive reals. 
For $a,c \in K$ with $c \leq a$
let $K^{c,a}$ denote the intersection of $K$ and 
the box $[c_1,a_1] \times [c_2,a_2] \times \cdots \times [c_m,a_m]$.
\begin{Lem}\label{lem:K^ca}
	If $a > 0$ then
	$K^{c,a}$ is a strictly-convex subspace of $K$.
\end{Lem}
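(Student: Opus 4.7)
The plan is to construct a strictly-nonexpansive retraction $\phi\colon K\to K^{c,a}$; the sufficient condition for strict convexity recorded in Section~\ref{subsec:geodesicmetricspace} then finishes the job. The natural candidate is the componentwise clamp onto the box,
\[
\phi(x)_i := \max\bigl\{c_i,\min\{a_i,x_i\}\bigr\} \qquad (i=1,2,\ldots,m),
\]
which is the Euclidean projection onto $\prod_i[c_i,a_i]$ and obviously fixes every point of $K^{c,a}$.

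First I would check that $\phi$ actually lands in $K^{c,a}$ and not merely in the box. The box condition is built in, so the real work is to show $\phi(x)\in K$ for every $x\in K$. If $\phi(x)_i<0$ then both $c_i<0$ and $\min\{a_i,x_i\}<0$; since $a_i>0$, the latter forces $x_i<0$. Because $x,c\in K$, this gives $x_j\le 0$ and $c_j\le 0$ for every $j\le i$, and hence $\phi(x)_j = \max\{c_j,\min\{a_j,x_j\}\}\le 0$, as required by the defining condition of $K$. The hypothesis $a>0$ enters essentially here.

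For the metric part, the key observation is that the $K$-length of any polygonal path $P$ equals its Euclidean arc length in $\RR^m$, since within each orthant $\RR_-^{i}\times \RR_+^{m-i}$ the metric $d_K$ coincides with the Euclidean one. Because $\phi$ is coordinatewise $1$-Lipschitz, this gives $d_K(\phi\circ P)\le d_K(P)$ for every polygonal path $P$ in $K$. To upgrade this to the strict contraction required by the sufficient condition, suppose $P$ has endpoints in $K^{c,a}$ but passes through a point outside the box; then some coordinate $P_i$ must leave and re-enter $[c_i,a_i]$, so on a subinterval of positive length one has $P_i'(t)\ne 0$ while $(\phi\circ P)_i'(t)=0$, and integrating the arc-length element gives $d_K(\phi\circ P)<d_K(P)$. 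The principal obstacle is the first step—ensuring that componentwise clamping respects the orthant-gluing condition defining $K$, where both $a>0$ and $c\in K$ are needed; the length comparisons are routine consequences of the piecewise-Euclidean structure once that step is verified.
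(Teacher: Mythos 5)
Your proposal is correct and takes essentially the same route as the paper: define the componentwise clamp onto the box $\prod_i[c_i,a_i]$, verify it maps $K$ into $K$ using $a>0$ together with $c\in K$, and then conclude via the strict-nonexpansiveness criterion for strict convexity. The only difference is that the paper states "it is easy to see that $\varphi$ is a strictly-nonexpansive retraction" and stops there, whereas you spell out the arc-length argument for the strict inequality.
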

\begin{proof}
	Consider map $\varphi: \RR^m \to [c_1,a_1] \times [c_2,a_2] \times \cdots \times [c_m,a_m]$ defined by
	$\varphi(x)_i := x_i$ if $x_i \in [c_i,a_i]$, $c_i$ if $x_i \leq c_i$, and $a_i$ if $x_i \geq a_i$.
	It is easy to see that $\varphi$ is a strictly-nonexpansive retraction.
	Thus it suffices to show that $x \in K$ implies $\varphi(x) \in K$.
	Suppose $\varphi(x)_i < 0$. By $a_i > 0$, it holds $x_i < 0$ and $c_i < 0$.
	Therefore, by $x,c \in K$, 
	it holds $x_j \leq 0$ and $c_j \leq 0$ for $j \leq i$. 
	Thus $\varphi(x)_j \leq 0$ for $j \leq i$.
	This concludes $\varphi(x) \in K$.
\end{proof}

Consider the path space $K({\cal A})$ for an arch ${\cal A} = (B = U_0,U_1,\ldots,U_m = C)$, 
points $x,y \in K({\cal A})$ with $\supp x = B$ and $\supp y = C$, and notations in Section~\ref{subsec:cubical}.
Here $K({\cal A})$ is considered as an orthant space 
(by replacing $[0,1]$ with $\RR_{+}$).
Let $a \in \RR^m$ and $b  \in \RR^m$ be defined by
\begin{equation}
a_i := \|X_i\|, \quad  b_i := \|Y_i\| \quad (i \in [m]).
\end{equation}
Define map $\iota:K^{-b,a} \to K({\cal A})$ by
\begin{equation}
\iota (\lambda)_{X_i \cup Y_i}  
:= \left\{
\begin{array}{cc}
\lambda_i x_{X_i}/ a_i  & {\rm if}\ \lambda_i \geq 0,  \\
- \lambda_i y_{Y_i}/ b_i & {\rm if}\ \lambda_i \leq 0,
\end{array}\right. \quad (\lambda \in K^{-b,a}).
\end{equation}
Here we abbreviate $x|_{X_i}$ as $x_{X_i}$; this abbreviation is used in sequel.
\begin{Lem}[{\cite[Theorem 4.4]{Owen11}}]\label{lem:iota}
	The map $\iota$ embeds $K^{-b,a}$ into $K({\cal A})$ as a subspace, and
	every geodesic $P$ connecting $x,y$ in $K({\cal A})$
	belongs to $K^{-b,a} \subseteq K({\cal A})$.
\end{Lem}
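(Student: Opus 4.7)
The strategy is to realize $K^{-b,a}$ inside $K({\cal A})$ via the explicit parametrization $\iota$, and then exhibit a strictly-nonexpansive retraction from $K({\cal A})$ onto the image, which forces every geodesic from $x=\iota(a)$ to $y=\iota(-b)$ to remain inside.

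For the embedding claim I would decompose $K_m = \bigcup_{j=0}^m O_j$ with $O_j := \RR_-^j \times \RR_+^{m-j}$. For $\lambda \in O_j \cap K^{-b,a}$, the piecewise formula collapses to the single linear expression $\iota(\lambda) = \sum_{k \leq j}(-\lambda_k/b_k)\, y_{Y_k} + \sum_{k > j}(\lambda_k/a_k)\, x_{X_k}$, whose nonzero support lies in $U_j = Y_1\cup\cdots\cup Y_j\cup X_{j+1}\cup\cdots\cup X_m$, so $\iota(\lambda) \in [0,1]^{U_j}$. On the common face $\{\lambda_{j+1}=0\}$ of adjacent orthants $O_j, O_{j+1}$ both branches of the definition vanish on the coordinate in question, so $\iota$ is continuous. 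Since the sets $X_k, Y_\ell$ are pairwise disjoint, the restriction of $\iota$ to $O_j \cap K^{-b,a}$ is block-diagonal linear and preserves Euclidean norm (each block scales a segment by $1/a_k$ or $1/b_k$), giving $\|\iota(\lambda)-\iota(\mu)\|_{K({\cal A})}^2 = \sum_k (\lambda_k - \mu_k)^2 = \|\lambda - \mu\|^2$ on each orthant. Because path length in both $K^{-b,a}$ and $K({\cal A})$ is additive across the common faces and each piece is an isometry, $\iota$ embeds $K^{-b,a}$ as a subspace.

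For the retraction I would, within each cube $[0,1]^{U_j} = \prod_{k \leq j}[0,1]^{Y_k} \times \prod_{k > j}[0,1]^{X_k}$, define $\pi_j$ factor-wise: on the factor $[0,1]^{X_k}$ (for $k > j$) take the nearest-point projection onto the segment $[0, x_{X_k}]$, and on $[0,1]^{Y_k}$ (for $k \leq j$) take the nearest-point projection onto $[0, y_{Y_k}]$. Each factor map is $1$-Lipschitz as the nearest-point projection onto a convex subset of Euclidean space, so $\pi_j$ is $1$-Lipschitz on $[0,1]^{U_j}$. On the shared face $[0,1]^{U_j \cap U_{j+1}}$ the $X_{j+1}$ and $Y_{j+1}$ coordinates are both zero, so the differing factor projections of $\pi_j, \pi_{j+1}$ both send zero to zero and therefore agree on the face. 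Hence the $\pi_j$ glue to a $1$-Lipschitz retraction $\pi:K({\cal A}) \to \iota(K^{-b,a})$ fixing $\iota(K^{-b,a})$ pointwise.

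Finally I would upgrade $\pi$ to a strictly-nonexpansive retraction. If a polygonal path $P$ with $P(0),P(1) \in \iota(K^{-b,a})$ leaves the image on some subsegment inside a cube $[0,1]^{U_j}$, then in at least one factor the path has nonzero component orthogonal to the designated segment $[0,x_{X_k}]$ or $[0,y_{Y_k}]$, and this orthogonal motion is strictly collapsed by the corresponding nearest-point projection, yielding $d(\pi(P)) < d(P)$. Since $x = \iota(a)$ and $y = \iota(-b)$ both lie in $\iota(K^{-b,a})$, any geodesic between them would be strictly shortened by $\pi$ if it left the image, contradicting minimality; hence every such geodesic belongs to $\iota(K^{-b,a})$. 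The main obstacle I anticipate is the bookkeeping for the "nonzero orthogonal component" argument in the strict inequality $d(\pi(P))<d(P)$, which requires locating, on a path that merely meets the closed set $\iota(K^{-b,a})$ but escapes on some interval, a factor direction in which the escape is genuinely orthogonal to the one-dimensional target segment; this can be handled by a standard subdivision argument over the finitely many simplices met by the polygonal $P$.
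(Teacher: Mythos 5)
Your strategy matches the paper's: verify that $\iota$ is an isometric embedding block-by-block, then construct a strictly-nonexpansive retraction onto its image so that any geodesic with endpoints in $\iota(K^{-b,a})$ is forced to stay there. Where you diverge is in how the retraction is built. You project each factor $[0,1]^{X_k}$ (resp.\ $[0,1]^{Y_k}$) directly onto the \emph{segment} $[0,x_{X_k}]$ (resp.\ $[0,y_{Y_k}]$) in one step. The paper instead projects orthogonally onto the whole \emph{line} $\RR x_{X_i}$ (resp.\ $\RR y_{Y_i}$); the resulting retraction $\varphi$ has image identified with $K_m$, and the paper then invokes Lemma~\ref{lem:K^ca} to clamp to the box $K^{-b,a}$. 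Both routes give the same conclusion, but the paper's two-step factoring is cleaner precisely where your argument is imprecise. You assert that when a polygonal path escapes $\iota(K^{-b,a})$ inside a cube $[0,1]^{U_j}$, then ``in at least one factor the path has nonzero component orthogonal to the designated segment,'' and that this orthogonal motion is what gets strictly collapsed. That is not the only way to leave the image: the path can stay on the ray $\RR_+ x_{X_k}$ but overshoot the endpoint $x_{X_k}$ (possible inside $[0,1]^{X_k}$ whenever every coordinate of $x_{X_k}$ is strictly less than $1$, and always possible in the orthant-space picture the paper adopts in the appendix), in which case the escaping motion is \emph{radial}, not orthogonal. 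Nearest-point projection onto the segment still strictly contracts there, because the projection is constantly $x_{X_k}$ on the overshooting subpath while the subpath itself has positive length; so your conclusion is correct, but the justification you give does not cover this case. The paper's decomposition neatly separates the two phenomena: the orthogonal line projection handles transverse escape, and Lemma~\ref{lem:K^ca} handles radial overshoot.
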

\begin{proof}
	We observe the former statement by 
	$\|\iota(\lambda)_{X_i \cup Y_i} - \iota(\lambda')_{X_i \cup Y_i}\| 
	= \|\lambda_i x_{X_i}/ a_i - \lambda_i' x_{X_i}/ a_i\| 
	= |\lambda_i - \lambda'_i|$ (if $\lambda_i,\lambda'_i\geq 0$). 
	
	For each $i \in [m]$, consider the orthogonal projections $\varphi_i$ from $\RR^{X_i}$ 
	to $\RR x_{X_i}$ and $\varphi'_i$ from $\RR^{Y_i}$ to $\RR y_{Y_i}$.
	This gives rise to a retraction $\varphi :K({\cal A}) \to K({\cal A})$	
	defined by $\varphi(z)_{X_i} := \varphi_i(z_{X_i})$ and $\varphi(z)_{Y_i} := \varphi'_i(z_{Y_i})$
	for $i\in [m]$. Then $\varphi$ is strictly-nonexpansive, 
	and its image is viewed as $K_m$.
	Hence $P$ belongs to $K^{-b,a}$ (by Lemma~\ref{lem:K^ca}).
\end{proof}

Hence the geodesic problem on $K({\cal A})$ 
reduces to that on $K^{-b,a}$ for positive vectors $a,b \in \RR^n$.
Let $I = (I_1,I_2,\ldots,I_k)$ be an ordered partition of $[m]$, i.e.,
for some $0 = i_0 < i_1 < i_2 < \cdots < i_k = m$, 
it holds $I_\ell =[i_\ell] \setminus [i_{\ell-1}]$ for $\ell \in [k]$.
Let $K^{-b,a}_I$ be the subspace of $K^{-b,a}$
consisting of points $x$ satisfying
\begin{itemize}
	\item for each $j$, there is $\lambda \in [0,1]$ such that 
	$x_{I_j} = \lambda a_{I_j}$ or $x_{I_j} = - \lambda b_{I_j}$.
\end{itemize}
\begin{Lem}\label{lem:K^-ba_I}
	For an ordered partition $I=(I_1,I_2,\ldots,I_k)$ of $[m]$, the subspace 
	$K^{-b,a}_I$ is isometric to $K^{- b^I, a^I} \subseteq K_k$ for $a^I,b^I \in \RR^k$ defined by
	\[
	(a^I)_j :=\| a_{I_j} \|,\quad (b^I)_j :=\|b_{I_j} \| \quad (j \in [k]),
	\]
	where the isometry $\phi: K^{-b^I, a^I} \to K^{-b,a}_I$ is given by
	\[
	\phi(\lambda)_{I_j} := 
	\left\{ \begin{array}{cc}
	\lambda_j a_{I_j}/ \|a_{I_j}\|   & {\rm if}\ \lambda_j \geq 0, \\
	\lambda_j b_{I_j}/ \|b_{I_j}\|    & {\rm if}\ \lambda_j \leq 0,
	\end{array}\right.
	\quad (\lambda \in K^{-b^I, a^I}, j \in [k]).
	\]
\end{Lem}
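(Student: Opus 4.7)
}
The underlying idea is that $K^{-b,a}_I$ is naturally parametrized by one scalar per block $I_j$: the signed magnitude of the displacement in that block, with a positive sign meaning we sit on the $a_{I_j}$-ray and a negative sign meaning we sit on the $-b_{I_j}$-ray. The map $\phi$ is exactly the inverse of this parametrization, expanding each scalar $\lambda_j$ back to a vector in $\RR^{I_j}$ along the unit vector $a_{I_j}/\|a_{I_j}\|$ or $b_{I_j}/\|b_{I_j}\|$. Once this perspective is clear, the verification is routine; I outline the three steps I would carry out.

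First, I would verify that $\phi$ is well-defined as a map into $K^{-b,a}_I$. For $\lambda \in K^{-b^I,a^I}$ and $j \in [k]$, the block $\phi(\lambda)_{I_j}$ equals $(\lambda_j/\|a_{I_j}\|)a_{I_j}$ if $\lambda_j \geq 0$ and $(\lambda_j/\|b_{I_j}\|)b_{I_j}$ if $\lambda_j \leq 0$; both alternatives have scalar coefficient in $[0,1]$ by $|\lambda_j| \in [0, a^I_j] \cup [0,b^I_j]$, so $\phi(\lambda)$ fits the defining condition of $K^{-b,a}_I$ and lies in the box $[-b,a]$. For the $K_m$-condition, if $\phi(\lambda)_i < 0$ for some $i \in I_j$, then $\lambda_j < 0$ (since $b > 0$); the $K_k$-condition on $\lambda$ forces $\lambda_{j'} \leq 0$ for all $j' \leq j$, and as every $i' \leq i$ belongs to some $I_{j'}$ with $j' \leq j$, we get $\phi(\lambda)_{i'} \leq 0$, as required.

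Second, I would establish bijectivity. Given $x \in K^{-b,a}_I$, the defining condition uniquely writes $x_{I_j} = \mu_j a_{I_j}$ or $x_{I_j} = -\mu_j b_{I_j}$ with $\mu_j \in [0,1]$ (unambiguous when $x_{I_j} \neq 0$ because $a_{I_j}, b_{I_j} > 0$; when $x_{I_j}=0$ either choice gives $\mu_j=0$). Setting $\lambda_j := \mu_j \|a_{I_j}\|$ in the first case and $\lambda_j := -\mu_j \|b_{I_j}\|$ in the second yields an element of the box $[-b^I,a^I]$ satisfying the $K_k$-condition (by the same block-sign argument run in reverse), and $\phi(\lambda) = x$. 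Injectivity of $\phi$ is immediate from the same recovery recipe.

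Third, I would show that $\phi$ is an isometry of length metrics. The cleanest route is through path lengths. For any rectifiable path $\gamma:[0,1]\to K^{-b^I,a^I}$ and each $j$, the block-$j$ coordinate $\gamma_j$ is a rectifiable path in $[-b^I_j,a^I_j]$, and $(\phi\circ\gamma)_{I_j}$ is its image under a piecewise-linear map of unit speed on each side of $0$ (the two pieces being multiplication by the unit vectors $a_{I_j}/\|a_{I_j}\|$ and $b_{I_j}/\|b_{I_j}\|$). Hence the block-$j$ length contributions agree, and summing (via the Pythagorean identity on infinitesimal displacements in disjoint coordinate blocks) gives $d(\phi\circ\gamma) = d(\gamma)$; the set $\{t : \gamma_j(t)=0\}$ has measure zero so it causes no issue at the sign change. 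Since $\phi$ is a bijection that preserves path lengths in both directions, the length metrics match. The only mild obstacle is the sign-change subtlety at $\lambda_j=0$, which is handled by the standard observation that length integrals ignore measure-zero sets; nothing beyond this requires attention.
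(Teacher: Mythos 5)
Your proposal is correct and matches the paper's approach, which is only given as the one-line pointer ``A straightforward verification similar to the proof of Lemma~\ref{lem:iota}.'' That pointer amounts to the observation you make explicit: on each sign pattern the blockwise map $\lambda_j \mapsto \phi(\lambda)_{I_j}$ is a linear isometry onto its image, so $\phi$ is a length-preserving bijection and hence an isometry for the intrinsic (subspace) metrics. Your well-definedness and bijectivity checks are exactly the ``straightforward verification'' the paper leaves implicit, and they go through because $a, b > 0$ componentwise (hence $a^I, b^I > 0$ as well). The only small imprecision is the ``measure zero'' justification at the sign change: the set $\{t : \gamma_j(t) = 0\}$ can in fact have positive measure, but this is harmless for a different reason — $\phi$ is continuous at $\lambda_j = 0$ (both branches give $0$), the blockwise map is $1$-Lipschitz globally and a local isometry on each sign, so total variation in each block is preserved; combining blocks via the Pythagorean identity for orthogonal coordinate sets (as in Lemma~\ref{lem:product}) then gives preservation of path length, and thus of the intrinsic metric.
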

\begin{proof}
	A straightforward verification similar to the proof of Lemma~\ref{lem:iota}.
\end{proof}

Next consider a geodesic $P:[0,1] \to K$ connecting $P(0) = a$ and $P(1) = -b$, 
which belongs to $K^{-b,a}$ (Lemma~\ref{lem:K^ca}).
For $i=1,2,\ldots,m$, 
let $t_i \in [0,1]$ denote the first time for which the $i$-th coordinate of $P$ is zero (nonpositive), and
let $p^i := P(t_i)$.
\begin{Lem}\label{lem:t1<t2<...<tm}
	\begin{itemize}
		\item[{\rm (1)}] $0< t_1 \leq t_2 \leq \cdots \leq t_m < 1$.
		\item[{\rm (2)}] $P(t)_i  > 0$ if $t < t_i$ and $P(t)_i < 0$ if $t > t_i$.
	\end{itemize}
\end{Lem}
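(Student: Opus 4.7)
My plan is to establish (2) first, since (1) will follow almost immediately from it. For the first half of (2), the inequality $P(t)_i > 0$ for $t < t_i$ is immediate from the definition of $t_i$ as $\inf\{t : P(t)_i \leq 0\}$ combined with continuity of $P$. The endpoint strict bounds $0 < t_i < 1$ are also clear by continuity from $P(0)_i = a_i > 0$ and $P(1)_i = -b_i < 0$.

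The strict negativity $P(t)_i < 0$ for $t > t_i$ is the main content, and I will prove it by a projection argument in the spirit of Lemma~A.1. Let $\pi_i : \RR^m \to \RR^m$ be the map that sets the $i$-th coordinate to zero. First I verify $\pi_i(K) \subseteq K$: if $\pi_i(x)_k < 0$, then $k \neq i$ and $x_k < 0$, hence by $x \in K$ we have $x_j \leq 0$ for all $j \leq k$, which in turn gives $\pi_i(x)_j \leq 0$ for those $j$ (the case $j = i$ being trivial). Moreover $\pi_i$ is 1-Lipschitz in the Euclidean metric and strictly shortens any polygonal path whose $i$-th coordinate is nonconstant. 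Now suppose $P(t^*)_i \geq 0$ for some $t^* > t_i$. By continuity, $P(t_i)_i = 0$, and combined with $P(1)_i < 0$ we can find $t_i \leq \alpha < \beta \leq 1$ with $P(\alpha)_i = P(\beta)_i = 0$ and $P_i$ nonconstant on $[\alpha, \beta]$ --- unless $P_i$ is identically zero on $[t_i, s]$ for some $s > t_i$, which is the lingering subcase. Outside this subcase, replacing $P|_{[\alpha, \beta]}$ by $\pi_i \circ P|_{[\alpha, \beta]}$ produces a path in $K$ with the same endpoints but strictly smaller length, contradicting that $P$ is a geodesic. In the lingering subcase, the geodesic moves entirely within the face $\{x_i = 0\} \cap K$ on $[t_i, s]$; to obtain a contradiction I would perturb the entry and exit points near $t_i$ and $s$ and invoke the strict convexity of Lemma~A.1 applied to a suitable enlarged box, or alternatively use that a nonconstant constant-speed geodesic is injective in its image together with the chamber-wise linearity of $P$.

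For (1), the bounds $0 < t_i < 1$ were already noted. To see $t_i \leq t_{i+1}$, I argue by contradiction: suppose $t_{i+1} < t_i$ and pick $t' \in (t_{i+1}, t_i)$. Then (2) yields $P(t')_i > 0$ (since $t' < t_i$) and $P(t')_{i+1} < 0$ (since $t' > t_{i+1}$). However, $P(t') \in K$ forces $P(t')_{i+1} < 0 \Rightarrow P(t')_i \leq 0$, contradicting $P(t')_i > 0$. The main obstacle in the above plan is the lingering subcase in the proof of (2); it cannot be excluded by the plain projection $\pi_i$, which acts as the identity there, and requires a finer perturbation argument that exploits the geometry of the chambers adjacent to the face $\{x_i = 0\}$.
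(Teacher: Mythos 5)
Your projection-by-$\pi_i$ argument is a genuinely different strategy from the paper's: the paper decomposes the subpath $P|_{[0,t_i]}$, observing that it lies in $K^{p^i,a}=K^{p^i_{[i-1]},a_{[i-1]}}\times[p^i_i,a_i]\times\cdots\times[p^i_m,a_m]$ and invoking Lemma~\ref{lem:product} to conclude that coordinates $\geq i$ trace a straight segment; this forces $P(t)_i$ to be a strictly decreasing linear function on $[0,t_i]$, and the time-reversed version gives a strictly decreasing linear function through $t_i$ from the other side, so both $t_1\leq\cdots\leq t_m$ and the sign pattern fall out together. Your structure --- prove (2) via a strictly-nonexpansive retraction, then read off (1) from membership in $K$ --- is cleaner as a scaffold, and your verification that $\pi_i(K)\subseteq K$ and that $\pi_i$ strictly shortens any polygonal path on which the $i$-th coordinate varies is correct.

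However, the ``lingering'' subcase you flag at the end is a genuine gap, not merely a technicality, and as written the proof does not close it. The issue is not just that the $\pi_i$ argument is silent there --- it is that if $P_i$ were identically zero on a nondegenerate interval $[\alpha,\beta]$ with $\alpha=t_i$, the conclusion (2) would simply be \emph{false} as stated (you would have $P(t)_i=0$, not $P(t)_i<0$, for $t\in(\alpha,\beta)$), and consequently your derivation of (1) from (2) would also lose force, since $P(t')_{i+1}=0$ gives no constraint in $K$. So excluding this case is the essential content of the lemma, and neither of the two fixes you gesture at (perturbing endpoints and re-applying Lemma~\ref{lem:K^ca}, or appealing to injectivity plus piecewise linearity) is carried through far enough to count as a proof: a geodesic in the polyhedral cone $K$ can in principle travel along a lower-dimensional face (the set $\{x_i=0\}\cap K$ is exactly where adjacent orthants are glued and where bends are allowed), so some argument specific to the product structure of $K^{p^i,a}$ is needed. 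The paper's route sidesteps this entirely: once the projection of $P|_{[0,t_i]}$ onto coordinates $\geq i$ is identified as the straight segment from $a_{[m]\setminus[i-1]}$ to $p^i_{[m]\setminus[i-1]}$, the $i$-th coordinate is an affine function of $t$ with value $a_i>0$ at $t=0$ and $0$ at $t=t_i$, hence has strictly negative slope; applying the same reasoning to any later zero-crossing and comparing the two affine functions rules out a plateau. I would recommend either importing that product decomposition as the missing step, or at least proving directly that $P_i$ is affine in a neighborhood of $t_i$ with nonzero slope before invoking $\pi_i$.
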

\begin{proof}
	By Lemma~\ref{lem:K^ca}, 
	the subpath $P'$ from $t=0$ to $t_i$ 
	belongs to the strictly-convex subspace $K^{p^i,a}$ of $K^{-b,a}$.
	Since $p^i_j \geq 0$ must hold for $j \geq i$, 
	it holds $K^{p^i,a} = K^{p^i_{[i-1]},a_{[i-1]}} \times 
	[p^i_i,a_{i}] \times [p^i_{i+1},a_{i+1}] \times \cdots \times [p^i_m, a_m]$.
	Therefore (after re-parametrization) the path $P'$ can be written as
	the product of geodesics in $K^{p^i_{[i-1]},a_{[i-1]}}$
	and in the box $[p^i_i,a_{i}] \times \cdots \times [p^i_m, a_m]$ (Lemma~\ref{lem:product}).  
	The latter path is the segment between $a_{[m] \setminus[i-1]}$ and $p^{i}_{[m] \setminus[i-1]}$.
	This means that for $i' \geq i$ 
	the $i'$-th coordinate cannot become zero before the $i$-th coordinate becomes zero.
	Also the $i$-th coordinate is positive before $t_i$.
	
	By a similar way (or reversing time), 
	the $i$-th coordinate is negative after~$t_i$.
\end{proof}
Define the ordered partition $I = (I_1,I_2,\ldots, I_k)$ of $[m]$ 
such that $i$ and $i'$ belong to the same part if and only if $p^i= p^{i'}$.
\begin{Lem}[{\cite[Corollary 4.7]{Owen11}}]\label{lem:I1I2...Ik}
	The geodesic $P$ belongs to subspace $K^{-b,a}_I$, and hence is also a geodesic in $K^{-b,a}_I$. 
\end{Lem}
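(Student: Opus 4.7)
The plan is to construct a ``projected'' path $\tilde P$ lying entirely in $K^{-b,a}_I$ whose length is at most that of $P$; since $P$ is a geodesic, the equality case will force $P$ itself into $K^{-b,a}_I$. The crucial input is Lemma~\ref{lem:t1<t2<...<tm}: for every $i\in I_j$ we have $t_i=t^{(j)}$, so $P(t)_{I_j}$ is componentwise in $[0,a_{I_j}]$ on $[0,t^{(j)}]$ and componentwise in $[-b_{I_j},0]$ on $[t^{(j)},1]$. Define
\[
\tilde P(t)_{I_j}\ :=\
\begin{cases}
\dfrac{\langle P(t)_{I_j},\,a_{I_j}\rangle}{\|a_{I_j}\|^{2}}\,a_{I_j}, & t\le t^{(j)},\\[4pt]
\dfrac{\langle P(t)_{I_j},\,b_{I_j}\rangle}{\|b_{I_j}\|^{2}}\,b_{I_j}, & t\ge t^{(j)},
\end{cases}
\]
i.e.\ orthogonally project the $I_j$-block of $P(t)$ onto $\RR a_{I_j}$ before $t^{(j)}$ and onto $\RR b_{I_j}$ after.

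The two branches agree at $t=t^{(j)}$ because $P(t^{(j)})_{I_j}=0$ makes both expressions vanish, so $\tilde P$ is continuous. The scalar coefficient on the first branch lies in $[0,1]$ (the termwise bound $0\le P(t)_i a_i\le a_i^2$ gives $\langle P(t)_{I_j},a_{I_j}\rangle\in[0,\|a_{I_j}\|^2]$), and symmetrically the coefficient on the second branch lies in $[-1,0]$. Hence $\tilde P(t)_{I_j}$ has the form $\lambda\,a_{I_j}$ or $-\lambda\,b_{I_j}$ with $\lambda\in[0,1]$, which is exactly the defining condition of $K^{-b,a}_I$. To check $\tilde P(t)\in K$, observe that at time $t$ the blocks with $t^{(j)}<t$ contribute nonpositive values while those with $t^{(j)}>t$ contribute nonnegative values; since $(I_1,\ldots,I_k)$ is the ordered partition of $[m]$ corresponding to $t^{(1)}<\cdots<t^{(k)}$, the nonpositive blocks precede the nonnegative ones in $[m]$, matching the orthant description of $K$. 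Thus $\tilde P$ is a continuous path from $a=\tilde P(0)$ to $-b=\tilde P(1)$ lying in $K^{-b,a}_I\subseteq K^{-b,a}$.

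For the length comparison, on each branch $\tilde P(t)_{I_j}$ is a scalar multiple of a fixed vector, so $\tilde P'(t)_{I_j}$ is the orthogonal projection of $P'(t)_{I_j}$ onto the corresponding line. Hence $\|\tilde P'(t)_{I_j}\|\le\|P'(t)_{I_j}\|$, with equality iff $P'(t)_{I_j}$ already lies in that line. Summing over $j$ via $\|v\|^2=\sum_j\|v_{I_j}\|^2$ gives $\|\tilde P'(t)\|\le\|P'(t)\|$, so $d(\tilde P)\le d(P)$. Since $P$ is the shortest path from $a$ to $-b$ in $K^{-b,a}$ and $\tilde P$ is another such path, equality must hold, which forces $P'(t)_{I_j}$ to be parallel to $a_{I_j}$ on $[0,t^{(j)}]$ and to $b_{I_j}$ on $[t^{(j)},1]$ for a.e.\ $t$. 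Integrating with the boundary values $P(0)_{I_j}=a_{I_j}$ and $P(1)_{I_j}=-b_{I_j}$ yields $P(t)_{I_j}\in\RR_{\ge 0}a_{I_j}$ on $[0,t^{(j)}]$ and $P(t)_{I_j}\in\RR_{\le 0}b_{I_j}$ on $[t^{(j)},1]$, so $P(t)\in K^{-b,a}_I$ for all $t$. The ``hence $P$ is also a geodesic in $K^{-b,a}_I$'' clause is then immediate, since path length in the subspace $K^{-b,a}_I$ coincides with path length in $K^{-b,a}$.

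The main obstacle is the geometric bookkeeping in the second paragraph: verifying that $\tilde P$ actually lies inside $K$ (not merely in $\RR^m$) hinges crucially on the consistency between the block ordering of $(I_1,\ldots,I_k)$ and the temporal ordering $t^{(1)}<\cdots<t^{(k)}$ provided by Lemma~\ref{lem:t1<t2<...<tm}, so that at every intermediate $t$ the nonpositive and nonnegative blocks produce a prefix/suffix decomposition of $[m]$ compatible with the defining orthants of $K$.
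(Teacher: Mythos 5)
Your proof is correct. It takes a slightly different route than the paper's. The paper reuses the product decomposition already established in the proof of Lemma~\ref{lem:t1<t2<...<tm}: the subpath of $P$ on $[0,t^{(j)}]$ lies in the strictly-convex subspace $K^{p^i,a}$ (for $i\in I_j$), which factors as $K^{p^i_{[i-1]},a_{[i-1]}}$ times a box in the remaining coordinates; since a geodesic in a product is a product of geodesics (Lemma~\ref{lem:product}), the restriction of $P$ to the box factor is a straight segment, and since $p^i_{I_j}=0$ the block $P(t)_{I_j}$ lies on the ray $\RR_{\ge0}\,a_{I_j}$ for $t\le t^{(j)}$ (and symmetrically on $\RR_{\le0}\,b_{I_j}$ afterwards) with no further argument needed. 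Your approach instead builds the block-wise orthogonal projection $\tilde P$ from scratch, checks that it lands in $K^{-b,a}_I$ (continuity at $t^{(j)}$, scalar coefficients in $[0,1]$, the prefix/suffix sign pattern), shows $d(\tilde P)\le d(P)$, and then extracts the equality case of nonexpansiveness and integrates. This is a self-contained variant of the same projection idea and is perfectly valid; it is somewhat more work because it re-derives the retraction rather than invoking the product structure already in hand. Both arguments rest on the same input from Lemma~\ref{lem:t1<t2<...<tm}: the monotone ordering $t^{(1)}<\cdots<t^{(k)}$ aligned with the ordered partition, and the vanishing $P(t^{(j)})_{I_j}=0$.
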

\begin{proof}
	For $j=1,2,\ldots,k$, consider the time $t := t_{i}$ for $i \in I_j$.
	As in the above proof, the subpath of $P$ from $0$ to $t$
	is the product of a path and the segment between $a_{J}$ and $p^i_{J}$  
	for $J := I_i \cup I_{i+1} \cup \cdots \cup I_{k}$.
	In particular, $P(t)_{I_i}$ is written as $\lambda(t) a_{I_i}$ for $t \leq t_i$.
	Consequently, $P$ belongs to $K^{-b,a}_I$.
\end{proof}

\begin{Lem}\label{lem:different}
	Suppose that $p^1,p^2,\ldots,p^m$ (or $t_1,t_2,\ldots,t_m$) are all different. 
	Then $P$ is the straight line between $a$ and $-b$, i.e.,
	\[
	P(t) = a - t (a + b) \quad (t \in [0,1]).
	\]
\end{Lem}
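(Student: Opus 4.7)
The plan is to prove that the ``corner points'' $p^{0}:=a,\ p^{1},\ p^{2},\ \ldots,\ p^{m},\ p^{m+1}:=-b$ are all collinear in $\RR^{m}$, which will force $P$ itself to be the Euclidean straight segment from $a$ to $-b$. The key geometric observation is that while the individual cells $C_{i}:=\RR_{-}^{i}\times\RR_{+}^{m-i}$ of $K$ are the natural convex building blocks, any two \emph{consecutive} cells also have a convex union in $\RR^{m}$: namely
\[
C_{i-1}\cup C_{i}\ =\ \RR_{-}^{i-1}\times\RR\times\RR_{+}^{m-i}.
\]
Since this union is a convex subset of $\RR^{m}$ contained in $K$, the length metric on it coincides with the Euclidean metric, so any geodesic in $K$ that stays inside it must be a Euclidean straight segment.

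To apply this, first I would verify, using Lemma~\ref{lem:t1<t2<...<tm}~(2) together with the strict ordering $0<t_{1}<t_{2}<\cdots<t_{m}<1$ coming from the hypothesis, that for each $i\in\{1,\ldots,m\}$ (with the conventions $t_{0}:=0$ and $t_{m+1}:=1$) the subpath $P|_{[t_{i-1},t_{i+1}]}$ is contained in $C_{i-1}\cup C_{i}$. The defining inequalities of $C_{i-1}\cup C_{i}$ are $x_{j}\leq 0$ for $j<i$ and $x_{j}\geq 0$ for $j>i$. For $t\in[t_{i-1},t_{i+1}]$ and $j<i-1$ one has $t\geq t_{i-1}>t_{j}$, so $P(t)_{j}<0$; the boundary case $j=i-1$ is handled by $P(t_{i-1})_{i-1}=0$ and the fact that $P(t)_{i-1}<0$ for $t>t_{i-1}$. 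The case $j>i$ is symmetric.

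Once this containment is in hand, the restriction $P|_{[t_{i-1},t_{i+1}]}$, being a subpath of a geodesic, is itself a geodesic of length $d_{K}(p^{i-1},p^{i+1})$; on the other hand, the Euclidean segment from $p^{i-1}$ to $p^{i+1}$ lies in $C_{i-1}\cup C_{i}\subseteq K$ and has length $\|p^{i+1}-p^{i-1}\|$. Combined with the general inequality $d_{K}\geq\|\cdot\|$ (any path in $K$ is a path in $\RR^{m}$), this forces $d_{K}(p^{i-1},p^{i+1})=\|p^{i+1}-p^{i-1}\|$, and hence $P|_{[t_{i-1},t_{i+1}]}$ coincides with that segment by uniqueness of Euclidean geodesics. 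In particular, $p^{i}$ lies on the segment $[p^{i-1},p^{i+1}]$. Applying this for $i=1,2,\ldots,m$ in succession — and noting that $p^{i-1}\neq p^{i}$, since they disagree in the $i$-th coordinate, so two consecutive corner points determine a unique line — all the points $p^{0},p^{1},\ldots,p^{m+1}$ lie on the line through $a$ and $-b$. Consequently the image of $P$ is the Euclidean segment $[a,-b]$, and since $P$ is proportionally parametrized by arclength one obtains $P(t)=a-t(a+b)$. The only subtle step is the containment $P|_{[t_{i-1},t_{i+1}]}\subseteq C_{i-1}\cup C_{i}$, which is precisely where the strict ordering of the $t_{i}$'s is essential: without it, consecutive corner points could coincide and the chain of collinearities could collapse incorrectly.
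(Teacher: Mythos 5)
Your proof is correct and takes essentially the same approach as the paper. Both arguments hinge on the observation that, thanks to the strict ordering $t_1<\cdots<t_m$ and Lemma~\ref{lem:t1<t2<...<tm}~(2), the geodesic near each corner point $p^i$ lies in the convex union $C_{i-1}\cup C_i=\RR_{-}^{i-1}\times\RR\times\RR_{+}^{m-i}\subseteq K$ (the paper expresses this via the sign patterns of nearby points $q,q'$ on either side of $p^i$ and noting $[q,q']\subseteq K$); this forces the path to be a Euclidean segment through $p^i$, so it does not bend there, and chaining the resulting collinearities gives the straight line from $a$ to $-b$.
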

\begin{proof}
	By Lemma~\ref{lem:t1<t2<...<tm}, the image of $P$ is 
	$[p^0,p^1] \cup [p^1,p^2] \cup \cdots \cup [p^m,p^{m+1}]$ for $p^0 := a$ and $p^{m+1} :=-b$.
	For $i \in [m]$, choose 
	$q \in [p^{i-1},p^{i}] \setminus \{p^i\}$ and 
	$q' \in [p^{i},p^{i+1}] \setminus \{p^i\}$ that are sufficiently close to $p^i$.
	The sign patterns of $q$ and $q'$ 
	are $(- \cdots \stackrel{i-1}{-} \stackrel{i}{+} \stackrel{i+1}{+}  \cdots +)$
	and $(- \cdots \stackrel{i-1}{-} \stackrel{i}{-} \stackrel{i+1}{+}  \cdots +)$, respectively.
	Hence $[q,q']$ belongs to $K$.
	Necessarily $[q,q']$ is a part of $P$, and $P$ does not bend at $t_i$.
	Thus $P$ is a straight line.
\end{proof}
We are ready to prove Theorem~\ref{thm:owen}~(2).
Now suppose that $P$, $a$, and $b$ comes from $K({\cal A})$.
By Lemma~\ref{lem:I1I2...Ik}, 
$P$ is a geodesic in $K^{-b,a}_{I}$.
Via $K^{-b,a}_{I} \simeq K^{-b^{I},a^{I}}$ (Lemma~\ref{lem:K^-ba_I}), 
we can regard $P$ as a geodesic in $K^{-b^{I},a^{I}}$. 
Now points $p^1,p^2,\ldots,p^{k}$ (or $t_1,t_2,\ldots,t_{k}$) are different in $K^{-b^{I},a^{I}}$.  
Thus $P$ is a straight line in $K^{-b^{I},a^{I}}$.
In particular, $t_i = \|a_{I_i}\|/(\|a_{I_i}\| + \|b_{I_i}\|)$ for $i \in [k]$.
By $t_1 < t_2 < \cdots < t_k$, we obtain the concavity condition:
\[
\|a_{I_1}\|/\|b_{I_1} \| <  \|a_{I_2}\| / \|b_{I_2} \|  < \cdots < \|a_{I_k}\| / \|b_{I_k} \|.
\]
Returning the path space $K({\cal A})$, 
this means that the geodesic $P$ in $K({\cal A})$ 
is equal to the path-space geodesic (\ref{eqn:formula}) in $K({\cal A}')$ 
for an $(x,y)$-concave subarch ${\cal A'} = (B = U'_0,U'_1,\ldots,U'_k =C)$, 
where $U'_j = U_i$ for the last index $i$ in $I_{j-1}$. 
This proves Theorem~\ref{thm:owen}~(2). 
Moreover one can see from Lemma~\ref{lem:v(K)>v(K')} that 
this arch ${\cal A}'$ consists of members of ${\cal A}$ 
that corresponds to extreme points of the convex hull of $(0,0)$ and 
$\xi_k$ $(k=0,1,2,\ldots,m)$.

\end{document}